\newtheorem{theorem}{Theorem}[section]
\newtheorem{lemma}[theorem]{Lemma}
\newtheorem{proposition}[theorem]{Proposition}
\newtheorem{remark}[theorem]{Remark}
\newtheorem{definition}[theorem]{Definition}
\newenvironment{proof of main prop}{{\bf Proof of Proposition \ref{main prop}.}}{\hfill\fbox{}\par\vspace{.2cm}}
\newenvironment{proof of main thm 1}{{\bf Proof of Theorem \ref{main thm 1}.}}{\hfill\fbox{}\par\vspace{.2cm}}
\newenvironment{proof of main thm 2}{{\bf Proof of Theorem \ref{main thm 2}.}}{\hfill\fbox{}\par\vspace{.2cm}}
\newenvironment{proof of main high 1}{{\bf Proof of Theorem \ref{main high 1}.}}{\hfill\fbox{}\par\vspace{.2cm}}
\newenvironment{proof of main high 2}{{\bf Proof of Theorem \ref{main high 2}.}}{\hfill\fbox{}\par\vspace{.2cm}}
\numberwithin{equation}{section}
\def\charf {\mbox{{\text 1}\kern-.24em {\text l}}}
\def\bea{\begin{eqnarray*}}
\def\eea{\end{eqnarray*}}
\def\be{\begin{eqnarray}}
\def\ee{\end{eqnarray}}
\begin{document}

\title[Marginally outer trap surface with capillary]{Marginally outer trap surface with capillary boundary and rigidity of initial data sets}


\author{Sanghun Lee}
\address{Department of Mathematics and Institute of Mathematical Science, Pusan National University, Busan 46241, Korea}
\email{kazauye@pusan.ac.kr}


\subjclass[2010]{53C24; 53A10; 53C21}

\keywords{Marginally outer trap surface; Capillary boundary; Rigidity.}

\date{\today}

\dedicatory{}

\begin{abstract}
In this paper, we present the several rigidity results of initial data sets with boundary when a marginally outer trap surface (MOTS) with capillary boundary is embedded. First, we establish estimates for the area of a MOTS with capillary boundary. Next, we prove a rigidity for 3-dimensional initial data sets with boundary. Furthermore, we extend our results from the 3-dimensional case to the high dimensional initial data sets using the Yamabe constant.
\end{abstract}

\maketitle
\section{Introduction}

In differential geometry, understanding the relationship between curvature and topology is a fundamental question. In \cite{SY}, Schoen and Yau showed that any oriented, two-sided, stable minimal closed surface $\Sigma$ in a $3$-dimensional Riemannian manifold $(M^{3}, g)$ with positive scalar curvature must have genus zero. Subsequently, Fischer-Colbrie and Schoen \cite{FCS} proved that in the case of non-negative scalar curvature, any stable minimal closed surface in a $3$-dimensional Riemannian manifold must have genus zero or one. Motivated by these results, Cai and Galloway \cite{CG} investigated the rigidity of 3-dimensional Riemannian manifolds with non-negative scalar curvature. Their main result is as follows.

\begin{theorem}[Cai and Galloway \cite{CG}]
Let $(M,g)$ be a $3$-dimensional Riemannian manifold with non-negative scalar curvature $R^{M}$. If $\Sigma^{2}$ is a two-sided, embedded, area-minimizing $2$-torus in $(M, g)$, then $M$ is locally isometric to the product $\left((-\epsilon,\epsilon) \times \Sigma, dt^{2} + \gamma\right)$, where $\gamma = g\vert_{\Sigma}$ is the metric on $\Sigma$ induced from $M$, is flat. 
\end{theorem}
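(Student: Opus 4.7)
My plan is to follow the standard two-stage scheme of Cai-Galloway: first extract \emph{infinitesimal} rigidity along $\Sigma$ from stability, then promote it to a local product structure by constructing and analyzing a CMC foliation.

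\textbf{Step 1: Infinitesimal rigidity on $\Sigma$.} Since $\Sigma$ is area-minimizing it is, in particular, stable minimal. Insert $\phi\equiv 1$ into the stability inequality
\[
\int_\Sigma \bigl(|A|^{2}+\mathrm{Ric}^{M}(\nu,\nu)\bigr)\phi^{2}\,dA\le \int_\Sigma|\nabla\phi|^{2}\,dA .
\]
Using the traced Gauss equation $2\,\mathrm{Ric}^{M}(\nu,\nu)=R^{M}-R^{\Sigma}+H^{2}-|A|^{2}$ with $H=0$, and Gauss-Bonnet $\int_{\Sigma}R^{\Sigma}=4\pi\chi(\Sigma)=0$ for the torus, the stability inequality collapses to
\[
0\;\ge\;\tfrac{1}{2}\!\int_\Sigma\bigl(R^{M}+|A|^{2}\bigr)\,dA .
\]
Non-negativity of $R^{M}$ forces $R^{M}\!\mid_{\Sigma}\equiv 0$ and $A\equiv 0$, so $\Sigma$ is totally geodesic and, via Gauss again, flat. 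Moreover the computation shows that the Jacobi operator $L=-\Delta_{\Sigma}-(|A|^{2}+\mathrm{Ric}^{M}(\nu,\nu))=-\Delta_{\Sigma}+K_{\Sigma}$ has first eigenvalue exactly $0$, with a strictly positive principal eigenfunction $\phi_{0}>0$.

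\textbf{Step 2: CMC foliation near $\Sigma$.} Consider normal graphs $\Sigma_{u}=\{\exp_{x}(u(x)\nu(x)):x\in\Sigma\}$ and the operator $u\mapsto H(u)-\fint_{\Sigma}H(u)$ between suitable Sobolev spaces. Its linearization at $u=0$ is $L$ modulo constants, which is an isomorphism on the orthogonal complement of $\phi_{0}$ since $\ker L=\mathbb{R}\phi_{0}$. The implicit function theorem therefore produces a one-parameter family of embedded surfaces $\{\Sigma_{t}\}_{t\in(-\epsilon,\epsilon)}$, with $\Sigma_{0}=\Sigma$, lapse $\phi_{t}\to\phi_{0}$ and $H$ constant on each leaf, so $H=H(t)$ is a smooth function of $t$ alone.

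\textbf{Step 3: All leaves are minimal.} This is the step I expect to be the main obstacle. The first-variation identity $\mathrm{Area}'(t)=-H(t)\int_{\Sigma_{t}}\phi_{t}\,dA_{t}$ combined with area-minimization of $\Sigma_{0}$ gives $H(t)\le 0$ for $t>0$ and $H(t)\ge 0$ for $t<0$. On the other hand, the propagation formula
\[
H'(t)\;=\;-\tfrac{1}{\phi_{t}}\Delta_{\Sigma_{t}}\phi_{t}-\bigl(|A_{t}|^{2}+\mathrm{Ric}^{M}(\nu_{t},\nu_{t})\bigr),
\]
after dividing by $\phi_{t}$, integrating against $\phi_{t}$ on $\Sigma_{t}$, applying Gauss and Gauss-Bonnet on the torus, rearranges to
\[
H'(t)\!\int_{\Sigma_{t}}\!\phi_{t}\,dA_{t}\;=\;-\tfrac{1}{2}\!\int_{\Sigma_{t}}\!\bigl(R^{M}+|A_{t}|^{2}+H(t)^{2}\bigr)\phi_{t}\,dA_{t}\;+\;\text{(a term controlled by }R^{\Sigma_{t}}\text{)} .
\]
Using $R^{M}\ge 0$, this yields a differential inequality of the form $H'(t)\le C\,H(t)^{2}$ (after Gronwall-type manipulation), forcing $H(t)\equiv 0$ in view of the sign constraints from the area-minimization. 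Hence every leaf is minimal.

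\textbf{Step 4: Splitting.} Since every $\Sigma_{t}$ is minimal with $H\equiv 0$ and the foliation achieves the minimum of area, each $\Sigma_{t}$ is itself stable, so Step 1 applies uniformly: $|A_{t}|\equiv 0$ and $R^{M}\equiv 0$ on each leaf. Total geodesy of every leaf means the unit normal field is parallel; equivalently, the lapse $\phi_{t}$ is $t$-independent after reparametrization and the induced metric is $t$-independent. In a tubular neighborhood this yields $g=dt^{2}+\gamma_{t}$ with $\gamma_{t}\equiv\gamma$, which is the claimed flat product metric $\bigl((-\epsilon,\epsilon)\times\Sigma,\,dt^{2}+\gamma\bigr)$.
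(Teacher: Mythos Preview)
The paper does not supply a proof of this statement: it is quoted in the introduction as a background theorem of Cai and Galloway, with no argument given, so there is no in-paper proof to compare against. Your outline is the standard one, and it is in fact the same template the paper adopts for its own MOTS results (infinitesimal rigidity from stability, a constant-mean-curvature foliation via the implicit function theorem, an ODE/monotonicity argument to kill the mean curvature, then splitting).

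Your Step~3, however, has a genuine logical slip. Area-minimization of $\Sigma_{0}$ does \emph{not} by itself give $H(t)\le 0$ for $t>0$: knowing only that $t\mapsto\mathrm{Area}(\Sigma_{t})$ has a minimum at $0$ tells you nothing about the sign of $H(t)$ at individual $t\ne 0$. The correct order is the reverse. First, the propagation identity (your displayed formula is missing a factor of $\phi_{t}$; it should read $H'(t)=-\Delta_{\Sigma_{t}}\phi_{t}-(|A_{t}|^{2}+\mathrm{Ric}^{M}(\nu_{t},\nu_{t}))\phi_{t}$), after dividing by $\phi_{t}$, integrating, and using Gauss together with $\int_{\Sigma_{t}}R^{\Sigma_{t}}=4\pi\chi(\Sigma_{t})=0$, yields
\[
H'(t)\int_{\Sigma_{t}}\frac{1}{\phi_{t}}\;=\;-\int_{\Sigma_{t}}\frac{|\nabla\phi_{t}|^{2}}{\phi_{t}^{2}}\;-\;\tfrac{1}{2}\int_{\Sigma_{t}}\bigl(R^{M}+|A_{t}|^{2}\bigr)\;-\;\tfrac{1}{2}H(t)^{2}\,|\Sigma_{t}|\;\le\;0,
\]
so $H'(t)\le 0$ outright, not merely $H'\le CH^{2}$. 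From $H(0)=0$ this gives the sign of $H$ on either side of $0$; \emph{then} the first-variation formula plus area-minimization force $\mathrm{Area}(\Sigma_{t})\equiv\mathrm{Area}(\Sigma_{0})$, hence $H\equiv 0$, with equality in every term producing $\nabla\phi_{t}\equiv 0$, $A_{t}\equiv 0$, and $R^{M}\equiv 0$ on each leaf --- exactly what your Step~4 needs. With this reordering the rest of your outline goes through.
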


For the cases of positive and negative scalar curvature,  the results were proved by Bray, Brendle, and Neves \cite{BBN}, and by Nunes \cite{N}, respectively. On the other hand, Micallef and Moraru \cite{MM} provided a new, unified proof encompassing all cases. For high dimensional Riemannian manifolds, related results can be found in \cite{BCBS, CG2, C, DE, M3, MO, MO2}.

An interesting question in this line of research is whether the above rigidity results can be extended to manifolds with boundary. In \cite{A}, Ambrozio presented a rigidity result for $3$-dimensional Riemannian manifolds with non-empty boundary when a surface with free boundary is embedded. Here, a \textit {free boundary} means that $\Sigma$ meets $\partial M$ orthogonally along $\partial\Sigma$. Subsequently, Longa \cite{LON} generalized Ambrozio’s result to the more inclusive setting of surfaces with capillary boundary, where the surface $\Sigma$ meets $\partial M$ at a constant angle $\theta \in (0,\pi)$ along $\partial\Sigma$. The precise statement is given below.

\begin{theorem}[Longa \cite{LON}]
Let $(M, g)$ be a Riemannian $3$-manifold with non-empty boundary, and assume $R^{M}$ and $H^{\partial M}$ are bounded from below. If $\Sigma^{2}$ is a compact, two-sided, properly embedded, energy-minimizing surface with capillary boundary and contact angle $\theta \in (0,\pi)$, then
\begin{align*}
\frac{A(\Sigma)}{2}\inf_{M}\, R^{M} + \frac{A(\partial\Sigma)}{{\rm sin}\, \theta}\inf_{\partial M}\, H^{\partial M} \leq 2\pi\chi(\Sigma),
\end{align*}
where $A(\Sigma)$ is the area of $\Sigma$, $A(\partial\Sigma)$ is the length of $\partial\Sigma$, and $\chi(\Sigma)$ is the Euler characteristic of $\Sigma$, respectively. Moreover, if equality holds and one of the following hypotheses holds:
\begin{itemize}
\item[(1)] Each component of $\partial \Sigma$ is locally length-minimizing in $\partial M$; or

\item[(2)] $\inf_{\partial M}\, H^{\partial M}$ = $0$,

\end{itemize}
then either $\theta = \frac{\pi}{2}$ or $\Sigma$ is a flat and totally geodesic cylinder, $M$ is flat and $\partial M$ is totally geodesic around $\Sigma$. In the first case, there is a neighborhood of $\Sigma$ in $M$ that is isometric to $\left((-\epsilon,\epsilon) \times \Sigma, dt^{2} + \gamma\right)$, where $\gamma$ is the induced metric on $\Sigma$. Furthermore, the Gaussian curvature is constant equal to $K^{\Sigma} = \frac{1}{2}\inf_{M}R^{M}$ and the geodesic curvature is constant equal to $k^{\partial\Sigma} = \inf_{\partial M}H^{\partial M}$ in $\Sigma$.
\end{theorem}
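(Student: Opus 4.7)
My plan is to plug the constant test function $f \equiv 1$ into the capillary stability inequality and combine the result with the Gauss equation together with Gauss--Bonnet. Since $\Sigma$ is energy-minimizing with fixed contact angle $\theta$, the second variation of the capillary energy yields, for every admissible $f \in C^\infty(\Sigma)$,
\begin{equation*}
\int_\Sigma \bigl(|\nabla f|^2 - (|A|^2 + \mathrm{Ric}^M(N,N))f^2\bigr)\, dA - \int_{\partial\Sigma} q\, f^2\, dL \geq 0,
\end{equation*}
where the capillary boundary coefficient is $q = \cot\theta\, h^\Sigma(\nu,\nu) + \csc\theta\, h^{\partial M}(\bar\nu,\bar\nu)$, and $\nu, \bar\nu$ denote the outward co-normals of $\partial\Sigma$ in $\Sigma$ and $\partial M$, respectively. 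Testing against $f \equiv 1$ and rewriting the interior integrand via the classical Gauss identity $|A|^2 + \mathrm{Ric}^M(N,N) = \tfrac{1}{2} R^M - K^\Sigma + \tfrac{1}{2}|A|^2$ (valid for minimal $\Sigma$) converts the stability inequality into a statement involving $R^M$, $K^\Sigma$, $|A|^2$, and the boundary term.

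To handle the boundary I would decompose the curve acceleration $D_\tau \tau$ of a unit tangent $\tau$ to $\partial\Sigma$ in the two adapted orthonormal frames $\{\nu, N\}$ and $\{\bar\nu, \bar N\}$, which are related by the rotation $\bar\nu = \cos\theta\,\nu + \sin\theta\,N$ and $\bar N = \sin\theta\,\nu - \cos\theta\,N$. Comparing the two expansions of $D_\tau \tau$ yields the identity $k^{g,\Sigma}_{\partial\Sigma} = \cot\theta\, h^\Sigma(\tau,\tau) + \csc\theta\, h^{\partial M}(\tau,\tau)$; using minimality ($h^\Sigma(\tau,\tau) = -h^\Sigma(\nu,\nu)$) and the $\partial M$ trace identity ($h^{\partial M}(\tau,\tau) = H^{\partial M} - h^{\partial M}(\bar\nu,\bar\nu)$), the boundary combination simplifies to $k^{g,\Sigma}_{\partial\Sigma} + q = \csc\theta\, H^{\partial M}$. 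Applying Gauss--Bonnet to replace $\int_\Sigma K^\Sigma\, dA$ then produces
\begin{equation*}
\frac{1}{2}\int_\Sigma R^M\, dA + \frac{1}{2}\int_\Sigma |A|^2\, dA + \csc\theta\int_{\partial\Sigma} H^{\partial M}\, dL \leq 2\pi\chi(\Sigma),
\end{equation*}
and bounding $R^M$ and $H^{\partial M}$ below by their infima and discarding the nonnegative $|A|^2$ term gives the stated estimate.

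For the rigidity I would trace equality in each step: $|A| \equiv 0$ on $\Sigma$ (so $\Sigma$ is totally geodesic), $R^M$ and $H^{\partial M}$ attain their infima along $\Sigma$ and $\partial\Sigma$, and the constant $f \equiv 1$ is a Jacobi function satisfying the Robin boundary condition $\partial_\nu f = q f$, which forces $q \equiv 0$ pointwise on $\partial\Sigma$. Using this Jacobi nullity and the implicit function theorem, I would construct a local one-parameter family $\Sigma_t$ of capillary CMC surfaces with fixed contact angle $\theta$ foliating a neighborhood of $\Sigma$, with initial lapse $\phi_0 \equiv 1$. Applying the main inequality to each leaf and combining energy-minimality with the scalar and mean curvature infima sandwiches $H(\Sigma_t) \equiv 0$, and the standard monotonicity argument upgrades the minimal foliation to the local product $\bigl((-\epsilon,\epsilon)\times\Sigma,\, dt^2 + \gamma\bigr)$ with $\gamma$ flat (from $K^\Sigma = \tfrac{1}{2}\inf_M R^M$) and $k^{\partial\Sigma} \equiv \inf_{\partial M} H^{\partial M}$. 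The dichotomy in the conclusion then follows from the pointwise identity $q \equiv 0$: either $\cot\theta = 0$ (i.e.\ $\theta = \pi/2$), or hypothesis~(1) (length-minimization forcing $k^{g,\partial M} \equiv 0$) or hypothesis~(2) ($\inf_{\partial M} H^{\partial M} = 0$) collapses the remaining geometric data to the flat totally geodesic cylinder alternative.

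The main technical obstacle I anticipate is the equality analysis, specifically the construction of the capillary CMC foliation: the contact angle $\theta$ must be preserved along the family, and the Robin-type Jacobi operator must be inverted on the orthogonal complement of the constants. This is the capillary analogue of the free-boundary foliation used in \cite{A}, but the delicate dependence on the $\cot\theta$ factor in the Robin datum is precisely what feeds into the angle dichotomy appearing in the rigidity conclusion.
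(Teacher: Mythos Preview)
This theorem is quoted from Longa \cite{LON} as background and is not proved in the present paper, so there is no proof here to compare against directly. That said, your inequality argument is correct and is precisely the time-symmetric ($h^{M}=0$) specialization of what the paper does in Theorem~\ref{main thm 1-1}: your boundary identity $k^{\partial\Sigma}+q=\csc\theta\,H^{\partial M}$ is exactly Lemma~\ref{lem 3.2} for minimal $\Sigma$, and the foliation you invoke is the Riemannian case of Proposition~\ref{foliation}.

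Your rigidity sketch, however, misidentifies where the angle dichotomy comes from. From $q\equiv 0$ together with $\Sigma$ totally geodesic you obtain only $h^{\partial M}(\bar\nu,\bar\nu)=0$ along $\partial\Sigma$; this does not by itself yield ``either $\cot\theta=0$ or \dots'' as a logical alternative. The genuine mechanism---visible in Longa's argument and in the MOTS analogue Theorem~\ref{main thm 1-2} of this paper---is that the $\cot\theta$ factor enters through the boundary term in the first variation of area along the capillary CMC foliation $\{\Sigma_t\}$ (cf.\ equation~(\ref{first var})). One differentiates the area in $t$, compares with the monotonicity of $H(t)$ obtained by applying the inequality to each leaf, and isolates a $\cot\theta\cdot\int_{\partial\Sigma_t}\varphi\,ds$ contribution; it is this term that, under hypothesis~(1) or~(2), either forces $\theta=\pi/2$ or drives all the remaining curvatures to zero. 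Your outline jumps from $q\equiv 0$ on $\Sigma_0$ straight to the dichotomy without passing through this evolution step, which is precisely where both the extra hypotheses and the contact angle do their work.
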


A detailed explanation of the capillary boundary condition is provided in Section 2. 

On the other hand, in the case of high dimensional Riemannian manifolds with boundary, Ambrozio's result was extended by Barros and Cruz \cite{BC} and Longa's result was extended by Lee, Park, and Pyo \cite{LPP}.

From the perspective of general relativity, the above results can be seen as a time-symmetric case of an initial data set. An \textit{initial data set} $(M, g, h^{M})$ in general relativity consists of a Riemannian manifold $(M, g)$ and a symmetric 2-tensor $h^{M}$. These initial data sets correspond to spacelike hypersurfaces in a time-oriented Lorentzian manifold $(\mathsf{L}, \bar{g})$. In particular, a time-symmetric initial data set is one in which $h^{M} = 0$. Therefore, all of the above results can be interpreted as applying to time-symmetric initial data sets in general relativity. In the general (non-time-symmetric) setting, minimal surfaces and scalar curvature are replaced by \textit{marginally outer trapped surfaces} (MOTS) and the \textit{dominant energy condition}, respectively. A detailed description of MOTS and the dominant energy condition is provided in Section 2. 

In \cite{GM}, Galloway and Mendes extended the rigidity result for 3-dimensional Riemannian manifolds with positive scalar curvature, originally due to Bray, Brendle, and Neves \cite{BBN}, to 3-dimensional initial data sets by employing marginally outer trapped surfaces (MOTS). Their result is as follows.

\begin{theorem}[Galloway and Mendes \cite{GM}]
Let $(M, g, h^{M})$ be a $3$-dimensional initial data set. Let $\Sigma^{2}$ be a spherical MOTS in $(M, g, h^{M})$ which is weakly outermost and outer area-minimizing. Assume that $\mu - \vert J \vert \geq C_{+}$ on $M_{+}$ for $C_{+} > 0$. Then
\begin{align*}
A(\Sigma)C_{+} \leq 4\pi.
\end{align*}
If equality holds, then we have the following:
\begin{itemize}
\item[(a)] An outer neighborhood $V \approx [0,\epsilon) \times \Sigma$ of $\Sigma$ in $M$ is isometric to $\left([0,\epsilon) \times \Sigma, dt^{2} + \gamma\right)$, where $(\Sigma, \gamma)$ is the $2$-sphere of the constant Gaussian curvature $K^{\Sigma} = C_{+}$.
\item[(b)] Each slice $\Sigma_{t} \approx \{t\} \times \Sigma$ is totally geodesic as a submanifold of spacetime. Equivalently, $\chi^{+}(t) = \chi^{-}(t) = 0$, where $\chi^{\pm}(t)$ are the null second fundamental forms of $\Sigma_{t}$ in $\mathsf{L}^{4}$.
\item[(c)] $h^{M}\vert_{T\Sigma_{t}} = 0$ and $h^{M}(N_{t}, \cdot)\vert_{T\Sigma_{t}} = 0$, where $N_{t}$ is the outer unit normal of $\Sigma_{t}$, and $J = 0$ on $V$. 
\end{itemize}
\end{theorem}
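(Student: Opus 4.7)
My approach combines the MOTS stability inequality with Gauss-Bonnet to establish the area bound, and in the equality case uses an outward MOTS foliation together with infinitesimal rigidity to extract the product structure and the prescribed conditions on $h^{M}$ and $J$.

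First, I would use that a weakly outermost MOTS has nonnegative principal eigenvalue $\lambda_{1}$ for the stability operator
\begin{align*}
L\psi = -\Delta\psi + 2\langle X, \nabla\psi\rangle + \left(\tfrac{1}{2}R^{\Sigma} - (\mu + J(\nu)) - \tfrac{1}{2}|\chi^{+}|^{2} + \mathrm{div}\, X - |X|^{2}\right)\psi,
\end{align*}
where $X$ is the tangential dual of $h^{M}(\nu, \cdot)$ on $\Sigma$. Choosing a positive principal eigenfunction $\phi$, dividing $L\phi = \lambda_{1}\phi$ by $\phi$, invoking the identity $\Delta\phi/\phi = \Delta\log\phi + |\nabla\log\phi|^{2}$, and completing the square in $X$ yields the pointwise identity
\begin{align*}
\tfrac{1}{2}R^{\Sigma} - (\mu + J(\nu)) - \tfrac{1}{2}|\chi^{+}|^{2} - \mathrm{div}\, X = \lambda_{1} + \Delta\log\phi + |\nabla\log\phi - X|^{2}.
\end{align*}
Integrating over $\Sigma$ (the Laplacian and divergence terms drop out), applying Gauss-Bonnet with $\chi(\Sigma) = 2$, and using the dominant-energy-type bound $\mu + J(\nu) \geq \mu - |J| \geq C_{+}$ gives $A(\Sigma)\, C_{+} \leq 4\pi$.

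In the equality case every nonnegative term in the integrated identity must vanish, producing $\lambda_{1} = 0$, $\chi^{+} \equiv 0$, $\mu + J(\nu) \equiv C_{+}$, $K^{\Sigma} \equiv C_{+}$, and $\nabla\log\phi = X$ on $\Sigma$. Since $\lambda_{1} = 0$ admits a positive eigenfunction, the Andersson-Metzger implicit function theorem produces a smooth outward foliation $\{\Sigma_{t}\}_{t \in [0,\epsilon)}$ with $\Sigma_{0} = \Sigma$ whose leaves have constant null expansion $\theta^{+}(t)$. The weakly outermost property forces $\theta^{+}(t) \geq 0$ for $t > 0$, and combining a first-variation identity for area along this foliation with the outer area-minimizing condition rules out $\theta^{+}(t) > 0$, so each $\Sigma_{t}$ is itself a MOTS. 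Applying the area bound leafwise forces equality on every $\Sigma_{t}$, so the infinitesimal rigidity holds on each leaf: $K^{\Sigma_{t}} \equiv C_{+}$, $\chi^{+}(t) \equiv 0$, $\mu + J(\nu_{t}) \equiv C_{+}$. An ODE for the lapse function derived from the MOTS equation along the foliation then shows it is constant on each leaf, giving the product metric $dt^{2} + \gamma$ on $V$ and establishing (a). For (b), the product structure kills the second fundamental form of each leaf, which together with $\chi^{+}(t) \equiv 0$ forces $\chi^{-}(t) \equiv 0$. For (c), the vanishing of $\chi^{\pm}$ combined with the vanishing of the second fundamental form yields $h^{M}|_{T\Sigma_{t}} = 0$; the momentum constraint and the equality $\mu - |J| = C_{+}$ then force $h^{M}(N_{t}, \cdot)|_{T\Sigma_{t}} = 0$ and $J \equiv 0$ on $V$.

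The main obstacle I expect is the foliation step: under only the weakly outermost hypothesis (rather than strictly outermost) one must carefully combine the outer area-minimizing property with the first-variation identity to exclude leaves with $\theta^{+}(t) > 0$, which is the delicate point in promoting the infinitesimal rigidity along $\Sigma$ to a full product structure on an outer neighborhood.
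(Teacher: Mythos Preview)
This theorem is quoted from \cite{GM} in the paper's introduction and is not proved in the paper itself; the paper instead establishes capillary-boundary generalizations (Theorems~\ref{main thm 1-1} and~\ref{main thm 1-2}) via the same circle of ideas. Your outline is essentially the Galloway--Mendes strategy and aligns with the paper's techniques modulo the boundary terms, but two steps need sharpening.

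First, ruling out $\theta^{+}(t)>0$ does not come from a first-variation-of-area identity per se. One integrates the evolution equation for $\theta^{+}$ over each leaf and, after Gauss--Bonnet and the energy bound, obtains (in the boundaryless version of the paper's inequality leading to~(\ref{ineq 5.5}))
\[
(\theta^{+})'(t)\int_{\Sigma_t}\phi_t^{-1}\,dv \;\le\; \theta^{+}(t)\int_{\Sigma_t}\mathrm{tr}_g h^{M}\,dv \;+\; C_{+}\bigl(A(\Sigma)-A(\Sigma_t)\bigr).
\]
It is the outer area-minimizing hypothesis $A(\Sigma_t)\ge A(\Sigma)$ (together with $C_{+}>0$) that makes the last term nonpositive; an integrating-factor ODE then yields $\theta^{+}(t)\le 0$, and weakly outermost forces $\theta^{+}(t)\equiv 0$.

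Second, your derivation of (a)--(c) is circular: you invoke the product structure to deduce $h^{\Sigma_t}=0$ and hence $\chi^{-}(t)=0$, but obtaining the product structure already requires $h^{\Sigma_t}=0$. The correct order (mirrored at the end of the proof of Theorem~\ref{main thm 1-2}) is: unwind the equalities in the $\theta^{+}$-evolution to get $\chi^{+}_t=0$ and $W_t=\nabla\log\phi_t$; then run the analogous evolution for $\theta^{-}$ to obtain $\chi^{-}_t=0$, $J=0$, and constancy of the lapse $\phi_t$. Only after this do $h^{\Sigma_t}=0$, $h^{M}|_{T\Sigma_t}=0$, $h^{M}(N_t,\cdot)|_{T\Sigma_t}=0$, and the product metric follow simultaneously.
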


Here, $\mu$ and $J$ means that $\mu = T(N_{M}, N_{M})$ and $J = T(N_{M},\cdot)\vert_{T_{p}M}$ for $p \in M$, where the Einstein tensor $G_{\bar{g}} \equiv Ric_{\bar{g}} - \frac{1}{2}R_{\bar{g}}\bar{g} = T$ in $(\mathsf{L}^{4},\bar{g})$ and $N_{M}$ is the future-directed timelike unit normal in $(M, g, h^{M})$.

Subsequently, Mendes \cite{M} generalized the rigidity results of Nunes \cite{N} and Moraru \cite{MO, MO2}, originally formulated for Riemannian manifolds with negative scalar curvature, to $(n+1)$-dimensional initial data sets.

Recently, Alaee, Lesourd, and Yau \cite{ALY} extended the concept of minimal surfaces with free boundary to marginally outer trapped surfaces (MOTS) with free boundary, and established a rigidity result for 3-dimensional initial data sets with boundary. More recently, de Almeida and Mendes \cite{AM, M2} generalized this rigidity result to $(n+1)$-dimensional initial data sets with boundary.

In this paper, we generalize the results concerning marginally outer trapped surfaces (MOTS) with free boundary in \cite{ALY, AM, M2} to the setting of MOTS with capillary boundary. More precisely, we establish a rigidity result for 3-dimensional initial data sets with boundary when a MOTS with capillary boundary is embedded. Furthermore, we extend this rigidity result from the 3-dimensional case to high dimensional initial data sets with boundary.

First, we derive an area estimate for marginally outer trapped surfaces (MOTS) with capillary boundary.

\begin{theorem} [Theorem \ref{main thm 1-1}] \label{main thm 1}
Let $(M, g, h^{M})$ be a 3-dimensional initial data set with boundary and $\Sigma^{2}$ be a compact stable MOTS with capillary boundary in $(M, g, h^{M})$. Assume that $\mu + J(N) \geq C$ for $C \in \mathbb{R}$ and $(M, g, h^{M})$ satisfies the tilted dominant boundary energy condition. Then
\begin{align} \label{area est}
A(\Sigma)C \leq 2\pi\chi(\Sigma),
\end{align}
where $\chi(\Sigma)$ is the Euler characteristic. If equality holds in (\ref{area est}), then we have
\begin{itemize}
\item[(1)] $\mu + J(N) = C$ and $\chi^{+} = 0$ on $\Sigma$,
\item[(2)]the Gaussian curvature $K^{\Sigma} = C$ on $\Sigma$ and the geodesic curvature $k^{\partial\Sigma} = 0$ along $\partial\Sigma$ in $\Sigma$,
\item[(3)] $\lambda_{1}(L_{s}) = \lambda_{1}(L) = 0$ on $\Sigma$,
\item[(4)] $H^{\partial M} + (\cos\theta) {\rm tr}_{\partial M}h^{M} = \sin\theta\vert h^{M}(\bar{N},\cdot)^{T} \vert$.
\end{itemize}
\end{theorem}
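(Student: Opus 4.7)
My plan is to adapt the MOTS stability argument of Galloway-Mendes \cite{GM} to the capillary setting, combining it with the boundary techniques Longa \cite{LON} developed for minimal surfaces with capillary boundary. Since $\Sigma$ is a stable MOTS with capillary boundary, the Jacobi-type operator $L$ (with the appropriate oblique boundary condition) has non-negative principal eigenvalue. Applying the logarithm substitution of Galloway-Schoen to the positive first eigenfunction of $L$ allows one to integrate out the asymmetric part and conclude that, for the constant test function $\varphi \equiv 1$, the quadratic form associated to the symmetrized operator $L_{s} = -\Delta + Q$ satisfies
\begin{align*}
\int_{\Sigma} Q\, dA \geq \oint_{\partial\Sigma} B\, ds,
\end{align*}
where
\begin{align*}
Q = \tfrac{1}{2}R^{\Sigma} - (\mu + J(N)) - \tfrac{1}{2}|\chi^{+}|^{2}
\end{align*}
arises from the traced Gauss equation, the Hamiltonian constraint, and the MOTS identity $H + \mathrm{tr}_{\Sigma}h^{M} = 0$, while $B$ is the capillary boundary potential encoding the contact angle $\theta$, the mean curvature $H^{\partial M}$, and the relevant transverse $h^{M}$-components.

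Next I would apply the Gauss-Bonnet theorem
\begin{align*}
\int_{\Sigma} K^{\Sigma}\, dA + \oint_{\partial\Sigma} k^{\partial\Sigma}\, ds = 2\pi\chi(\Sigma),
\end{align*}
invoke the hypothesis $\mu + J(N) \geq C$, and drop the non-negative term $\tfrac{1}{2}|\chi^{+}|^{2}$, which rearranges the previous inequality into
\begin{align*}
A(\Sigma)\, C \leq 2\pi\chi(\Sigma) - \oint_{\partial\Sigma}\bigl(k^{\partial\Sigma} + B\bigr)\, ds.
\end{align*}
The crux is to show $k^{\partial\Sigma} + B \geq 0$ pointwise along $\partial\Sigma$. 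Using the capillary decomposition $\nu^{\Sigma} = \cos\theta\, N + \sin\theta\, \bar{N}$ of the outer unit conormal to $\partial\Sigma$ in $\Sigma$, I would expand $k^{\partial\Sigma}$ in terms of the geodesic curvature of $\partial\Sigma$ within $\partial M$, $H^{\partial M}$, and trigonometric factors of $\theta$. After the geometric cancellations with $B$, the surviving quantity equals $H^{\partial M} + (\cos\theta)\,\mathrm{tr}_{\partial M} h^{M} - \sin\theta\, |h^{M}(\bar{N}, \cdot)^{T}|$, which is exactly what the \emph{tilted dominant boundary energy condition} is assumed to dominate. This yields (\ref{area est}).

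For the rigidity statement, equality forces every inequality used above to be saturated. Pointwise saturation of $\mu + J(N) \geq C$ and $|\chi^{+}|^{2} \geq 0$ gives (1); pointwise saturation of $k^{\partial\Sigma} + B \geq 0$ gives (4); and saturation of the log trick identity forces the asymmetric part of $L$ to act trivially on its first eigenfunction, so $\varphi \equiv 1$ is a common first eigenfunction of both $L$ and $L_{s}$ with eigenvalue zero, which is (3). The Euler-Lagrange equation $L_{s}1 \equiv 0$ then reads $Q \equiv 0$, which combined with (1) gives $K^{\Sigma} \equiv C$, while the saturated Robin boundary condition collapses to $k^{\partial\Sigma} \equiv 0$; together this is (2). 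The main technical obstacle I anticipate is bookkeeping for the boundary term: carefully distributing $\cos\theta$ and $\sin\theta$ among the second fundamental forms of $\partial M$ and $\partial\Sigma$ and the $h^{M}$-components, and tracking signs throughout, so that the tilted dominant boundary energy condition matches precisely the non-negativity of $k^{\partial\Sigma} + B$ needed in the estimate.
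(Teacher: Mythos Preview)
Your proposal is correct and follows essentially the same route as the paper: pass to the symmetrized operator $L_s$ via the Galloway--Schoen logarithm trick (the paper's Lemma~3.1), test with $\varphi\equiv 1$, apply Gauss--Bonnet, and absorb the boundary term using the tilted dominant boundary energy condition; the rigidity case is handled identically by saturating each inequality and reading off $K^{\Sigma}=C$, $k^{\partial\Sigma}=0$ from the eigenvalue equation $L_s 1=0$ with the Robin condition. One correction to your stated bookkeeping: the capillary decomposition you wrote, $\nu^{\Sigma}=\cos\theta\,N+\sin\theta\,\bar N$, cannot be right since $\nu$ is tangent to $\Sigma$ and hence orthogonal to $N$; the relations the paper uses are $\bar N=(\cos\theta)N+(\sin\theta)\nu$ and $\nu=(\cos\theta)\bar\nu+(\sin\theta)\bar N$, and the key boundary identity (the paper's Lemma~3.2 plus the claim~\eqref{tilted}) shows that $k^{\partial\Sigma}+B=\tfrac{1}{\sin\theta}\bigl(H^{\partial M}+(\cos\theta)\,\mathrm{tr}_{\partial M}h^{M}+(\sin\theta)\,h^{M}(\bar N,\bar\nu)\bigr)$, which is then bounded below by zero via $h^{M}(\bar N,\bar\nu)\ge -\lvert h^{M}(\bar N,\cdot)^{T}\rvert$ and the tilted condition.
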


The following result is a rigidity result for a 3-dimensional initial data set with boundary. Before presenting the result, we introduce some notation and definitions necessary for its statement. First, we say that $h^{M}$ is \textit{n-convex} on $M$ if ${\rm tr}_{\pi}h^{M} \geq 0$ for all $n$-dimensional linear subspaces $\pi \subset T_{p}M$, for every $p \in M$ (see \cite{AM, M, M2}). Second, we denote by $M_{+}$ the region consisting of $\Sigma$ and the region outside $\Sigma$. Third, we say that $\Sigma$ is \textit{weakly outermost} if there exists no outer trap surfaces $(\Theta^{+} < 0)$ in $M_{+}$ that is homologous to $\Sigma$. Fourth, $\Sigma$ is said to be \textit{outer energy minimizing} if its energy is less than or equal to that of any surface in an outer neighborhood of $\Sigma$. Finally, the \textit{tilted dominant boundary energy condition} means that $H^{\partial M} + (\cos\theta){\rm tr}_{\partial M}h^{M} \geq \sin\theta\vert h^{M}(\bar{N}, \cdot)^{T} \vert$, where $h^{M}(\bar{N},\cdot)^{T}$ is the component tangential to $\partial M$ of the 1-form $h^{M}(\bar{N},\cdot)$ (see \cite{CH, CW}). A detailed description of these notions is provided in Section 2.

 We are now ready to state our main result.

\begin{theorem} [Theorem \ref{main thm 1-2}] \label{main thm 2}
Let $(M, g ,h^{M})$ be a 3-dimensional initial data sets with boundary and $\Sigma^{2}$ be a compact stable MOTS with capillary boundary in $(M, g, h^{M})$ that is weakly outermost in $M_{+}$. Assume that $\mu - \vert J \vert \geq C$ for $C \in \mathbb{R}$ and $(M, g, h^{M})$ satisfies the tilted dominant boundary energy condition, all of which hold on $M_{+}$. Suppose that the equality holds in (\ref{area est}).
\begin{itemize}
\item[(I)] If $C > 0$ and ${\rm tr}_{\pi}h^{M} \leq 0$ with $\theta \in [\frac{\pi}{2},\pi)$, then $\theta$ must be $\frac{\pi}{2}$.
\item[(II)] If $C < 0$ and 2-convex with $\theta \in (0,\frac{\pi}{2}]$, then $\theta$ must be $\frac{\pi}{2}$.
\item[(III)] If $C = 0$, 2-convex, and an energy-minimizing, then $\theta = \frac{\pi}{2}$ or not.
\end{itemize}
Moreover, the case of $\theta = \frac{\pi}{2}$, the following hold:
\begin{itemize}
\item[(i)] There exists an outer neighborhood $V$ of $\Sigma$ in $M$ such that $(V,g\vert_{V})$ is isometric to $([0,\epsilon) \times \Sigma, dt^{2} + \gamma)$, where $\gamma = g|_{\Sigma}$ and $(\Sigma,\gamma)$ have the constant Gaussian curvature $K^{\Sigma} = C$ and zero geodesic curvature $k^{\partial\Sigma} = 0$;
\item[(ii)] $h^{M} = a dt^{2}$ on $V$, where $a \in C^{\infty}$ depends only on $t \in [0,\epsilon)$;
\item[(iii)] $\mu = C$ and $J = 0$ on $V$;
\item[(iv)] The tilted dominant boundary condition is saturated along $V \cap \partial M$.
\end{itemize}
Otherwise, if without loss of generality of {\rm (III)}, $H^{\partial M} \geq 0$, and $\theta \neq \frac{\pi}{2}$, then $\Sigma$ is flat with $k^{\partial\Sigma} = 0$ and $M$ is flat with $H^{\partial M} = 0$ in an outer neighborhood of $\Sigma$.
\end{theorem}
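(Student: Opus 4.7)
The plan is to adapt the outer MOTS foliation strategy of Galloway--Mendes \cite{GM} and de Almeida--Mendes \cite{AM, M2} to the capillary setting, combining it with the angle analysis of Longa \cite{LON}. Theorem \ref{main thm 1-1} applied at equality already supplies the pointwise identities (1)--(4): in particular $\lambda_{1}(L)=0$ yields a positive first eigenfunction $\phi>0$ of the MOTS stability operator $L$, subject to the Robin-type boundary condition coming from the first variation of the capillary contact angle along $\partial\Sigma$. This $\phi$ is the deformation field on which the foliation is built.

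Using $\phi$, I would construct an outer foliation $V\cong[0,\epsilon)\times\Sigma$ by capillary hypersurfaces $\Sigma_{t}$ of constant contact angle $\theta$ and constant null expansion $\Theta^{+}(\Sigma_{t})=\lambda(t)$ via the implicit function theorem in a weighted H\"older space of normal graphs of the form $\exp(u\,\phi\, N)$; the vanishing of $\lambda_{1}(L)$ with a positive eigenfunction is precisely what allows the implicit function theorem to yield a one-parameter family rather than an isolated solution. The weakly outermost hypothesis rules out $\lambda(t)<0$ for $t>0$. Applying the area estimate (\ref{area est}) to each leaf (with $\chi(\Sigma_{t})=\chi(\Sigma)$ by continuity), together with the first variation
\begin{equation*}
\frac{d}{dt}A(\Sigma_{t}) = \int_{\Sigma_{t}} f\,\Theta^{+}_{t} \;-\; \int_{\Sigma_{t}} f\,\mathrm{tr}_{\Sigma_{t}}h^{M} \;+\; \text{(capillary boundary term)},
\end{equation*}
the $2$-convexity or $n$-convexity hypothesis, and the weakly outermost property, one forces $A(\Sigma_{t})\equiv A(\Sigma)$, hence $\lambda(t)\equiv 0$ and every leaf is itself a MOTS saturating Theorem \ref{main thm 1-1}. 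Combining $\chi^{+}=0$ and $\mu+J(N_{t})=C$ on each $\Sigma_{t}$ with $\mu-\vert J\vert\geq C$ gives $J=0$ and $\mu=C$ throughout $V$; together with the MOTS equation this yields a vanishing second fundamental form for each $\Sigma_{t}$ and $h^{M}\vert_{T\Sigma_{t}}=0$. The lapse of the foliation then becomes identically $1$, producing the local isometry $(V,g\vert_{V})\cong([0,\epsilon)\times\Sigma, dt^{2}+\gamma)$, the form $h^{M} = a\,dt^{2}$, and the saturated boundary identity along $V\cap\partial M$, which is (i)--(iv).

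For the contact angle, I apply the saturated boundary identity $H^{\partial M}+(\cos\theta)\mathrm{tr}_{\partial M}h^{M}=\sin\theta\vert h^{M}(\bar{N},\cdot)^{T}\vert$ from Theorem \ref{main thm 1-1}(4). In case (I), $\cos\theta\leq 0$ combined with $\mathrm{tr}_{\pi}h^{M}\leq 0$ applied to $\pi=T\partial M$ gives $(\cos\theta)\mathrm{tr}_{\partial M}h^{M}\geq 0$, and coupling this with $C>0$ (so $\chi(\Sigma)>0$ and $\Sigma$ is of disk type) in the area estimate along the foliation forces $\cos\theta=0$, i.e., $\theta=\pi/2$. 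Case (II) is the dual argument: $\cos\theta\geq 0$, $2$-convexity, and $C<0$ push the inequality in the opposite direction to again force $\theta=\pi/2$. In case (III), $C=0$ removes the Euler characteristic obstruction; under the extra hypothesis $H^{\partial M}\geq 0$ with $2$-convexity, if $\theta\neq\pi/2$ the saturated identity reduces to $H^{\partial M}=0$, $\mathrm{tr}_{\partial M}h^{M}=0$, and $h^{M}(\bar{N},\cdot)^{T}=0$, and together with $K^{\Sigma}=0$, $k^{\partial\Sigma}=0$, and the product structure from the foliation one concludes that $\Sigma$ is flat and $M$ is flat with totally geodesic boundary in an outer neighborhood of $\Sigma$.

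The main obstacle I anticipate is the construction of the capillary MOTS foliation in step one. The MOTS stability operator is not self-adjoint, and the free-boundary Neumann condition is replaced by an oblique Robin-type boundary condition encoding the fixed angle $\theta$; one must verify that the principal eigenvalue of this mixed problem agrees with the $\lambda_{1}(L)=0$ from Theorem \ref{main thm 1-1}(3), that a positive eigenfunction exists, and that the linearization is Fredholm of index zero so that the implicit function theorem produces the one-parameter family. A secondary subtlety arises in case (III), where both rigid alternatives must be tracked simultaneously and the saturated boundary identity has to cooperate with the product structure to produce the flat geometry.
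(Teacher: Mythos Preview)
Your overall strategy (constant-$\Theta^{+}$ capillary foliation, then rigidity from saturation on every leaf) is the right scaffold and matches the paper. The gap is in how you force $\theta=\pi/2$ in cases (I) and (II), and relatedly in how you reach $\lambda(t)\equiv 0$.

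First, applying the area estimate (\ref{area est}) to each leaf is circular: Theorem \ref{main thm 1-1} requires $\Sigma_{t}$ to be a stable MOTS, which is exactly what you are trying to conclude. The paper does not invoke the area estimate on the leaves; instead it integrates the evolution equation for $\Theta^{+}(t)$ directly. Writing $Y=W_{t}-\varphi^{-1}\nabla^{\Sigma_{t}}\varphi$ and using Gauss--Bonnet on $\Sigma_{t}$ together with the tilted boundary condition and the identity $(\cos\theta)\,{\rm tr}_{\Sigma_{t}}h^{M}-(\sin\theta)\,h^{M}(N_{t},\nu_{t})=(\cos\theta)\,{\rm tr}_{\partial M}h^{M}+(\sin\theta)\,h^{M}(\bar N,\bar\nu_{t})$, one obtains an ODE-type inequality of the form
\[
(\Theta^{+})'(t)\,\beta_{1}(t)\ \le\ \Theta^{+}(t)\,(\text{bounded})\ +\ C\bigl(A(\Sigma)-A(\Sigma_{t})\bigr)\ +\ \cot\theta\,\Theta^{+}(t)\,A(\partial\Sigma_{t}),
\]
and the first variation of area converts $C\bigl(A(\Sigma)-A(\Sigma_{t})\bigr)$ into a $\Theta^{+}$-term plus $C\cot\theta\int_{0}^{t}\bigl(\int_{\partial\Sigma_{r}}\varphi\bigr)dr$ plus a term controlled by the sign hypothesis on ${\rm tr}_{\pi}h^{M}$.

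Second, and this is the real point, the angle does \emph{not} come from the saturated boundary identity of Theorem \ref{main thm 1-1}(4); that single equation cannot by itself pin down $\cos\theta=0$ from the sign information you list. The mechanism is the explicit $\cot\theta$ contribution above. At $t=0$ the right side vanishes, so weak outermostness gives $(\Theta^{+})'(0)=0$; differentiating the right side at $t=0$ leaves only $C\cot\theta\cdot\bigl(\int_{\partial\Sigma}\varphi\bigr)/\beta_{1}(0)$. Under (I) ($C>0$, $\theta\in[\pi/2,\pi)$) or (II) ($C<0$, $\theta\in(0,\pi/2]$) this is strictly negative unless $\theta=\pi/2$, forcing $(\Theta^{+})''(0)<0$ and hence $\Theta^{+}(t)<0$ for small $t>0$, contradicting weak outermostness. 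Once $\theta=\pi/2$ the $\cot\theta$ terms vanish, the ODE integrates to $\Theta^{+}\equiv 0$, and items (i)--(iv) follow by quoting the free boundary rigidity of \cite{AM,M2}; you do not derive the product structure first and then read off the angle.

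For the flat alternative in (III) with $\theta\neq\pi/2$, the saturated boundary identity alone again is not enough. The paper runs the variation for the \emph{inward} null expansion $\Theta^{-}$ on the foliation (after $\Theta^{+}\equiv 0$ is established), obtaining $\Delta_{\Sigma_{t}}\varphi+\varphi^{-1}|\nabla^{\Sigma_{t}}\varphi|^{2}+\varphi\bigl(|J|+\tfrac14|\chi^{-}_{t}|^{2}\bigr)=0$; integrating against the Robin condition $\partial_{\nu_{t}}\varphi=(H^{\partial M}/\sin\theta)\varphi\ge 0$ forces $\nabla^{\Sigma_{t}}\varphi=0$, $J=0$, $\chi^{-}_{t}=0$, and $H^{\partial M}=0$, from which flatness follows via the Gauss equation. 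Your sketch should be reorganized around these two ingredients: the $\cot\theta$ second-derivative obstruction for the angle, and the $\Theta^{-}$ computation for the flat case.
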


\begin{remark}
If $\mu - \vert J \vert \geq C$, then it also follows that $\mu + J(N) \geq C$ for $C \in \mathbb{R}$, since $\mu + J(N) \geq \mu - \vert J \vert$. Therefore, the inequality (\ref{area est}) in Theorem \ref{main thm 1} remains valid under the assumption $\mu - \vert J \vert \geq C$ for $C \in \mathbb{R}$.
\end{remark}

Next, we extend Theorems \ref{main thm 1} and \ref{main thm 2} to high dimensional initial data sets. The following result provides an area estimate for high dimensional MOTS with capillary boundary.

\begin{theorem} [Theorem \ref{main high 1-1}] \label{main high 1}
Let $(M,g,h^{M})$ be an $(n+1)$-dimensional initial data set with boundary and $\Sigma^{n}$ be a compact stable MOTS with capillary boundary in $(M,g,h^{M})$. Assume that $\mu + J(N) \geq D$ for $D \in \mathbb{R}^{-} \cup \{0\}$ and $(M,g,h^{M})$ satisfies the tilted dominant boundary energy condition.
\begin{itemize}
\item[(A)] If $D < 0$ and $\sigma^{1,0}(\Sigma,\partial\Sigma) < 0$, then
\begin{align} \label{high area est}
2DA(\Sigma)^{\frac{2}{n}} \leq \sigma^{1,0}(\Sigma,\partial\Sigma).
\end{align}
\item[(B)] If $D = 0$ and $\sigma^{1,0}(\Sigma,\partial\Sigma) \leq 0$, then $\sigma^{1,0}(\Sigma,\partial\Sigma) = 0$.
\end{itemize}
Moreover, if equality holds in (\ref{high area est}) or $\sigma^{1,0}(\Sigma,\partial\Sigma) = 0$, then we have
\begin{itemize}
\item[(1)] $\mu + J(N) = D$ and $\chi^{+} = 0$ on $\Sigma$,
\item[(2)] $\Sigma$ is Einstein manifold with $R^{\Sigma} = 2D$ and totally geodesic boundary,
\item[(3)] $\lambda_{1}(L_{s}) = \lambda_{1}(L) = 0$ on $\Sigma$,
\item[(4)] $H^{\partial M} + (\cos\theta){\rm tr}_{\partial M}h^{M} = \sin\theta\vert h^{M}(\bar{N},\cdot)^{T} \vert$ along $\partial\Sigma$.
\end{itemize}
\end{theorem}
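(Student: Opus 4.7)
The plan is to mirror the proof of Theorem~\ref{main thm 1}, replacing the 2-dimensional Gauss--Bonnet step by the variational characterization of the Escobar-type relative Yamabe constant $\sigma^{1,0}(\Sigma,\partial\Sigma)$. Since $\Sigma$ is a stable MOTS with capillary boundary, the symmetrized stability operator $L_s$ admits a positive first eigenfunction $u>0$ with $\lambda_1(L_s)\ge 0$ and a Robin-type boundary condition driven by the contact angle $\theta$. Pairing $L_s u\ge 0$ against $u$, integrating by parts, and substituting the MOTS Gauss-type identity together with the hypothesis $\mu+J(N)\ge D$ and the tilted dominant boundary energy condition yields, after discarding the nonnegative $\tfrac12|\chi^+|^2$-term,
\begin{align*}
\int_\Sigma\!\left(|\nabla u|^2 + \tfrac12 R^\Sigma u^2\right)dA + \int_{\partial\Sigma}\! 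H^{\partial\Sigma} u^2\,dl \;\ge\; D\int_\Sigma u^2\,dA.
\end{align*}

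Next, to invoke $\sigma^{1,0}(\Sigma,\partial\Sigma)$, I would run a Moser-type substitution $v=u^{(n-2)/(2(n-1))}$ that rescales the gradient coefficient to match the Escobar conformal Laplacian weight $c_n=\tfrac{4(n-1)}{n-2}$, producing a sign-favorable remainder that can be discarded. Plugging $v$ into the Escobar--Yamabe quotient that defines $\sigma^{1,0}$, combining with the display above, and applying H\"older's inequality $\int_\Sigma u^2\,dA\le A(\Sigma)^{2/n}\bigl(\int_\Sigma u^{2n/(n-2)}\,dA\bigr)^{(n-2)/n}$ produces the bound $2DA(\Sigma)^{2/n}\le \sigma^{1,0}(\Sigma,\partial\Sigma)$ of (A). Case (B) follows from the same chain: if $\sigma^{1,0}(\Sigma,\partial\Sigma)\le 0$ but the inequality forces $\sigma^{1,0}(\Sigma,\partial\Sigma)\ge 0$, then $\sigma^{1,0}(\Sigma,\partial\Sigma)=0$.

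For the rigidity conclusions, I would trace equalities back through each step. Equality in H\"older forces $u$ to be constant; this collapses the stability inequality and forces $\lambda_1(L_s)=0$, $\mu+J(N)=D$ and $\chi^+=0$ on $\Sigma$, giving items (1) and (3). Saturation of the tilted dominant boundary energy condition yields item (4). For the Einstein conclusion together with the totally geodesic boundary in (2), I would reuse the stability identity with the now-constant $u$ and the vanishing of the Moser conformal remainder, combined with the MOTS Gauss equation; this simultaneously forces $R^\Sigma\equiv 2D$, the boundary $\partial\Sigma$ to be totally geodesic in $\Sigma$, and $\Sigma$ itself to be Einstein, in parallel with the higher-dimensional initial-data-set arguments of \cite{AM, M, M2}.

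The main technical obstacle I anticipate is the boundary bookkeeping: the Robin boundary condition on $u$ carries $\cos\theta$ together with $H^{\partial M}$ and $\mathrm{tr}_{\partial M}h^M$, whereas $\sigma^{1,0}(\Sigma,\partial\Sigma)$ is normalized with an Escobar boundary term that only sees $H^{\partial\Sigma}$. The tilted dominant boundary energy condition is precisely the ingredient that absorbs the extra $\cos\theta$ and $\sin\theta$ contributions with the correct sign into the Escobar boundary integral, and getting this absorption tight---so that equality in (\ref{high area est}) forces exactly the saturation in item (4)---is the delicate point on which sharpness depends.
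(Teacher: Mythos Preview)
Your outline has the right overall shape, but there is a genuine gap in the step where you pass from the stability inequality to a lower bound on $\sigma^{1,0}(\Sigma,\partial\Sigma)$. You fix the first eigenfunction $u$ of $L_s$, derive the inequality
\[
\int_\Sigma\!\Big(|\nabla u|^2+\tfrac12 R^\Sigma u^2\Big)\,dv+\int_{\partial\Sigma}H^{\partial\Sigma}u^2\,ds \ge D\int_\Sigma u^2\,dv
\]
for this single function, and then propose a Moser-type substitution $v=u^{(n-2)/(2(n-1))}$ to ``rescale the gradient coefficient to $c_n$''. There are two problems. First, the substitution does not do what you claim: writing $u=v^{2(n-1)/(n-2)}$ gives $|\nabla u|^2=\big(\tfrac{2(n-1)}{n-2}\big)^2 v^{2n/(n-2)}|\nabla v|^2$ and $u^2=v^{4(n-1)/(n-2)}$, so you pick up extra weights and do not recover the Yamabe numerator $c_n|\nabla v|^2+R^\Sigma v^2$ with a sign-favorable remainder. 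Second, even if it did, you would only obtain $\mathcal{Q}^{1,0}(v)\ge 2DA(\Sigma)^{2/n}$ for one specific $v$. Since $\sigma^{1,0}(\Sigma,\partial\Sigma)\ge \mathcal{Y}^{1,0}(\Sigma,\partial\Sigma)=\inf_{\varphi}\mathcal{Q}^{1,0}(\varphi)$, a lower bound on $\mathcal{Q}^{1,0}(v)$ gives no lower bound on the Yamabe invariant; the inequality goes the wrong way.

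The paper avoids both issues by using the Rayleigh characterization of $\lambda_1(L_s)\ge 0$, which yields the displayed inequality for \emph{every} $\varphi\in C^\infty(\Sigma)$, not just for the eigenfunction. Then no Moser trick is needed: since $2\le \tfrac{4(n-1)}{n-2}$ one simply enlarges the gradient coefficient to $c_n$, obtaining $\mathcal{Q}^{1,0}(\varphi)\ge 2DA(\Sigma)^{2/n}$ for all $\varphi$, and taking the infimum gives $\mathcal{Y}^{1,0}\ge 2DA(\Sigma)^{2/n}$, hence the bound on $\sigma^{1,0}$. In the equality analysis, it is the strict inequality $2<c_n$ (for $n\ge3$) that forces $|\nabla\varphi|\equiv0$ and hence $\varphi$ constant; your plan to extract constancy of $u$ from equality in H\"older is not the operative mechanism. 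Finally, for item~(2) the paper does not rederive the Einstein/totally geodesic conclusion from the stability identity but invokes the known characterization of equality $\mathcal{Y}^{1,0}=\sigma^{1,0}$ for manifolds with boundary (Ara\'ujo, Cruz--Santos); you should cite that result rather than appeal to a parallel argument.
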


Here, $\sigma^{1,0}(\Sigma,\partial\Sigma)$ is called the \textit{Yamabe constant}, and it is a topological invariant. For closed Riemannian manifolds, the Yamabe constant was independently introduced by Kobayashi \cite{KO} and Schoen \cite{SC2, SC}. In the case of Riemannian manifolds with boundary, it was introduced by Escobar \cite{ES2}. Further details are provided in Section 5.

Based on Theorem \ref{main high 1}, we generalize the 3-dimensional rigidity result of Theorem \ref{main thm 2} to high dimensional initial data sets, and simultaneously extend the results of de Almeida and Mendes (Theorem B in \cite{AM} and Theorem 5.1 in \cite{M2}) to the capillary boundary case.

\begin{theorem} [Theorem \ref{main high 1-2}] \label{main high 2}
Let $(M,g,h^{M})$ be an $(n+1)$-dimensional initial data sets with boundary and $\Sigma^{n}$ be a compact stable MOTS with capillary boundary in $(M,g,h^{M})$ that is weakly outermost in $M_{+}$. Suppose that $\mu - \vert J \vert \geq D$ for $D \in \mathbb{R}^{-} \cup \{0\}$ and $(M,g,h^{M})$ satisfies the tilted dominant boundary energy condition, all of which hold on $M_{+}$.
\begin{itemize}
\item[(A)] If $D < 0$, $\sigma^{1,0}(\Sigma,\partial\Sigma) < 0$, then
\end{itemize}
\begin{align*}
2DA(\Sigma)^{\frac{2}{n}} \leq \sigma^{1,0}(\Sigma,\partial\Sigma).
\end{align*}
Moreover, if equality holds and $n$-convex with $\theta \in (0,\frac{\pi}{2}]$ , then $\theta$ must be $\frac{\pi}{2}$.
\begin{itemize}
\item[(B)] If $D = 0$, $\sigma^{1,0}(\Sigma,\partial\Sigma) \leq 0$, $n$-convex, and an energy-minimizing, then $\theta = \frac{\pi}{2}$ or not.
\end{itemize}
Furthermore, the case of $\theta = \frac{\pi}{2}$, the following hold:
\begin{itemize}
 \item[(i)] There exists an outer neighborhood $V$ of $\Sigma$ in $M$ such that $(V,g\vert_{V})$ is isometric to $([0,\epsilon) \times \Sigma, dt^{2} + \gamma)$, where $\gamma = g|_{\Sigma}$ and $(\Sigma,\gamma)$ is the Einstein manifold with the scalar curvature $R^{\Sigma} = 2D$ and a totally geodesic boundary;
\item[(ii)] $h^{M} = a dt^{2}$ on $V$, where $a \in C^{\infty}$ depends only on $t \in [0,\epsilon)$;
\item[(iii)] $\mu = C$ and $J = 0$ on $V$;
\item[(iv)] The tilted dominant boundary condition is saturated along $V \cap \partial M$.
\end{itemize}
Otherwise, if without loss of generality of {\rm (B)}, $Ric^{M} = \frac{\mu}{n+1}g$, $H^{\partial M} \geq 0$, and $\theta \neq \frac{\pi}{2}$, then $\Sigma$ is Ricci flat with totally geodesic boundary and $M$ is Ricci flat with $H^{\partial M} = 0$ in an outer neighborhood of $\Sigma$.
\end{theorem}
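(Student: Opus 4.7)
The plan is to follow the strategy that succeeded for Theorem \ref{main thm 2} in dimension three, replacing its Gauss--Bonnet / topological input with the Yamabe-invariant information supplied by Theorem \ref{main high 1}. Since $\mu - \vert J \vert \geq D$ implies $\mu + J(N) \geq D$, the equality assumption $2DA(\Sigma)^{2/n} = \sigma^{1,0}(\Sigma, \partial\Sigma)$ (or $\sigma^{1,0} = 0$ when $D = 0$) places us in the equality case of Theorem \ref{main high 1}, so the intrinsic rigidity on $\Sigma$ comes for free: $\mu + J(N) = D$, $\chi^{+} = 0$, $\Sigma$ is Einstein with $R^{\Sigma} = 2D$ and totally geodesic boundary, $\lambda_{1}(L_{s}) = \lambda_{1}(L) = 0$, and the tilted dominant boundary energy condition is saturated along $\partial\Sigma$.

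The next step is to build a foliation by MOTS with capillary boundary. Because $\lambda_{1}(L) = 0$ for the MOTS stability operator equipped with the Robin-type boundary condition dictated by the angle $\theta$, there is a positive first eigenfunction $\phi$. An implicit-function-theorem construction in H\"older spaces then produces a one-parameter family $\{\Sigma_{t}\}_{t \in (-\epsilon, \epsilon)}$ of properly embedded hypersurfaces, each meeting $\partial M$ at constant angle $\theta$, written as normal graphs over $\Sigma$ with lapse close to $\phi$, and normalized so that $\Theta^{+}(\Sigma_{t}) = \psi(t)$ is constant on every leaf. The weakly outermost hypothesis gives $\psi(t) \geq 0$ for $t \geq 0$. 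Differentiating $\Theta^{+}$ and pairing with $\phi_{t}$, then using $\mu - \vert J \vert \geq D$ together with the Einstein condition on $\Sigma_{t}$, the vanishing of $\chi^{+}$, and absorbing the Robin boundary term via the saturated tilted boundary condition, produces $\psi'(t) \leq 0$; combined with $\psi(0) = 0$ and $\psi \geq 0$, this forces $\psi \equiv 0$. Hence every leaf is a MOTS, and on $V = \bigsqcup_{t}\Sigma_{t}$ one has $\mu + J(N) = D$, $\chi^{+}(\Sigma_{t}) = 0$, and $J(X) = 0$ for $X$ tangent to $\Sigma_{t}$.

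Two scenarios must then be sorted out. When $\theta = \frac{\pi}{2}$, the lapse can be normalized to $\phi \equiv 1$; the leaf-wise second fundamental form vanishes, giving $(V, g\vert_{V})$ isometric to $([0,\epsilon)\times \Sigma, dt^{2} + \gamma)$, and decomposing $h^{M}$ along the splitting together with $J = 0$ and $\chi^{+} = \chi^{-} = 0$ yields $h^{M} = a(t)\,dt^{2}$, $\mu = D$, and preservation of the saturated boundary condition, proving (i)--(iv). The more delicate point is ruling out $\theta \neq \frac{\pi}{2}$ in case (A) (and leaving both possibilities open in case (B)). Differentiating the constant-angle constraint $\langle N_{t}, \bar N \rangle = \cos\theta$ along the foliation yields an identity coupling $\cos\theta$ to the mean curvature of $\partial M$ and to the boundary values of $h^{M}(\bar N, \cdot)$. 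Combining this with the saturated relation $H^{\partial M} + (\cos\theta)\,{\rm tr}_{\partial M} h^{M} = \sin\theta\,\vert h^{M}(\bar N, \cdot)^{T} \vert$ and with $n$-convexity of $h^{M}$, a sign analysis shows $\cos\theta$ can carry the forced sign only if $\cos\theta = 0$, mirroring case (II) of the three-dimensional argument; in case (B) the weaker hypothesis leaves $\theta \neq \frac{\pi}{2}$ open.

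Finally, in the residual subcase of (B) where $\theta \neq \frac{\pi}{2}$, the assumptions $Ric^{M} = \frac{\mu}{n+1} g$ and $H^{\partial M} \geq 0$ together with $R^{\Sigma} = 2D = 0$ reduce $\Sigma$ to a Ricci-flat Einstein manifold with totally geodesic boundary, and the Riccati equation for the shape operator along the outer normal, combined with $\mu = 0$ and $J = 0$, forces $Ric^{M} = 0$ and $H^{\partial M} = 0$ in the outer neighborhood. The main obstacle I expect is the sign analysis at the capillary boundary: carefully tracking how constancy of the contact angle along the foliation, saturation of the tilted dominant boundary energy condition, and $n$-convexity of $h^{M}$ interact near $\partial\Sigma \subset \partial M$ is what ultimately distinguishes case (A) from case (B) and pins down $\theta = \frac{\pi}{2}$.
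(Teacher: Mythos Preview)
Your outline has a genuine gap in the step that is supposed to propagate $\psi'(t)\le 0$ to every leaf. You write that differentiating $\Theta^{+}$ and ``using $\mu-|J|\ge D$ together with the Einstein condition on $\Sigma_{t}$, the vanishing of $\chi^{+}$, and absorbing the Robin boundary term via the saturated tilted boundary condition'' yields $\psi'(t)\le 0$. But the equality conclusions of Theorem~\ref{main high 1} (Einstein, $\chi^{+}=0$, saturated boundary) are only known on $\Sigma_{0}$; for $t>0$ they are exactly what has to be \emph{deduced} once $\Theta^{+}(t)\equiv 0$ is established, so invoking them to prove $\psi'(t)\le 0$ is circular. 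In the three--dimensional argument this step is rescued by Gauss--Bonnet, which turns $\int_{\Sigma_{t}}K^{\Sigma_{t}}$ into a topological constant. In higher dimensions the paper's substitute is not a pairing with the lapse $\phi_{t}$: one multiplies the evolution identity for $\Theta^{+}$ by $u_{t}^{2}$, where $u_{t}$ is a Yamabe test function (the conformal factor to scalar curvature $2D$ and zero boundary mean curvature) on $\Sigma_{t}$, so that after integration the intrinsic curvature term is bounded by $\sigma^{1,0}(\Sigma,\partial\Sigma)\bigl(\int u_{t}^{2n/(n-2)}\bigr)^{(n-2)/n}$. Using the equality hypothesis $\sigma^{1,0}=2DA(\Sigma)^{2/n}$ together with the first variation of area to express $A(\Sigma)^{2/n}-A(\Sigma_{t})^{2/n}$ as a time integral of $\Theta^{+}$, one arrives at a differential inequality of the form $(e^{-\int\beta}\Theta^{+})'\le 0$, which with weak outermostness forces $\Theta^{+}\equiv 0$. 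This Yamabe/area--comparison mechanism is the missing ingredient.

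Your mechanism for forcing $\theta=\tfrac{\pi}{2}$ is also different from, and vaguer than, what actually works. The paper does not differentiate the constant--angle constraint; instead the $\cot\theta$ contribution already sits in the boundary term of the integrated inequality (through $q$ and the first variation of area). Evaluating at $t=0$ gives $(\Theta^{+})'(0)=0$, and computing the $t$--derivative of the right--hand side at $0$ produces a term $\tfrac{2D}{n}\cot\theta\cdot(\text{positive})$, so for $D<0$ and $\theta\in(0,\tfrac{\pi}{2})$ one gets $(\Theta^{+})''(0)<0$, contradicting weak outermostness. Your proposed ``sign analysis'' via the angle identity would need to reproduce exactly this second--derivative obstruction, and as stated it does not.
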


\section{Preliminary}

In this section, we introduce the notions of initial data sets, marginally outer trapped surfaces (MOTS), capillary boundary, and the Yamabe constant.

Let $(\mathsf{L},\bar{g})$ be an $(n+2)$-dimensional spacetime and time-oriented Lorentzian manifold and satisfies the Einstein field equation
\begin{align*}
G_{\bar{g}} \equiv Ric_{\bar{g}} - \frac{1}{2}R_{\bar{g}}\bar{g} = T,
\end{align*}
where $Ric_{\bar{g}}$ and $R_{\bar{g}}$ are the Ricci and scalar curvature of the metric $\bar{g}$, and $T$ is a symmetric two tensor, respectively. 

Let $M$ be an $(n+1)$-dimensional spacelike hypersurface embedded in $(\mathsf{L}, \bar{g})$, and let $N_{M}$ denote the future-directed timelike unit normal vector of $M$. Then the corresponding \textit{initial data sets} is defined as $(M, g, h^{M})$, where $(M,g)$ is an $(n+1)$-dimensional Riemannian manifold and $h^{M}$ is the second fundamental form of $M$ in $\mathsf{L}$. More precisely, the second fundamental form $h^{M}$ is defined by 
\begin{align*}
h^{M}(X_{M}, Y_{M}) = \bar{g}(\nabla^{\mathsf{L}}_{X_{M}}N_{M},Y_{M}), 
\end{align*}
where $X_{M},Y_{M} \in \mathfrak{X}(M)$ and $\nabla^{\mathsf{L}}$ is the Levi-Civita connection with respect to $\bar{g}$. By the Gauss and Codazzi equations, the initial data set $(M, g, h^{M})$ satisfies the Einstein constraint equations
\begin{align*}
&R_{g} + \left({\rm tr}_{g}h^{M}\right)^{2} - \vert h^{M} \vert^{2} = 2\mu \quad {\rm and} \\
&{\rm div}_{g}\left(h^{M} - tr_{g}h^{M}g\right) = J,
\end{align*}
where $\mu = T(N_{M},N_{M})$ and $J = T(N_{M},\cdot)$ are local \textit{energy density} and local \textit{current density}. We say that the initial data set $(M, g, h^{M})$ satisfies the \textit{dominant energy condition} if
\begin{align*}
\mu \geq \vert J \vert \,\,\, {\rm on} \,\,\, M.
\end{align*}

Let $\Sigma$ be an $n$-dimensional hypersurface in $M$ and let $N$ denote the unit normal vector field of $\Sigma$ in $M$. By convention, we choose $N$ to be the outward pointing unit normal, so that $-N$ is the inward pointing. Then $\Sigma$ admits two future-directed null normal vector fields given by 
\begin{align*}
\ell_{\pm} = N_{M} \pm N
\end{align*}
$N_{M}$ is the future-directed timelike unit normal to $M$ in $\mathsf{L}$. Then we define the \textit{null second fundamental form} $\chi^{\pm}$ by
\begin{align*}
\chi^{\pm}(X,Y) = \bar{g}(\nabla^{\mathsf{L}}_{X}\ell_{\pm}, Y) = h^{M}\vert_{\Sigma} \pm h^{\Sigma},
\end{align*}
where $X,Y \in \mathfrak{X}(\Sigma)$, $\nabla^{\mathsf{L}}$ is the Levi-Civita connection of $\bar{g}$, and $h^{\Sigma}(X,Y) = \langle \nabla^{M}_{X}N, Y \rangle$ is the second fundamental form of $\Sigma$ in $M$. The corresponding \textit{null mean curvature} is defined as
\begin{align*}
\Theta^{\pm} = {\rm tr}_{\Sigma}\chi^{\pm} = {\rm tr}_{\Sigma}h^{M} \pm H,
\end{align*}
where $H$ is the mean curvature of $\Sigma$. In particular, if the initial data set $(M, g, h^{M})$ is the time-symmetric ($h^{M} = 0$), then $\Theta^{+}(t)$ reduces to the mean curvature of $\Sigma$ in $M$.

Now we introduce the notion of MOTS. A hypersurface $\Sigma$ is called a \textit{trap surface} if both $\Theta^{+}$ and $\Theta^{-}$ are negative everywhere on $\Sigma$. It is called an  \textit{outer trap surface} if $\Theta^{+} < 0$ everywhere. A hypersurface $\Sigma$ is said to be a \textit{MOTS} if $\Theta^{+} = 0$. Finally, $\Sigma$ is said to be \textit{weakly outer trap} if $\Theta^{+} \leq 0$.

We now introduce the concept of capillary boundary. A hypersurface $\Sigma$ is said to have a \textit{capillary boundary} if it meets the boundary $\partial M$ of the Riemannian manifold $M$ at a constant angle $\theta \in (0,\pi)$. Assume that $\Sigma$ separates $M$ into two components. We label one of these components as $\Omega$, and we choose the unit normal vector $N$ to $\Sigma$ point into $\Omega$. Let $\bar{\nu}$ be the unit normal of $\partial\Sigma$, pointing out from $\Omega$ as a subset of $\partial M$. Furthermore, let $\nu$ be the outward unit normal vector of $\partial\Sigma$ in $\Sigma$ and let $\bar{N}$ be the outward unit normal of $\partial M$ in $M$. More precisely, we can choose orthonormal bases $\{N, \nu \}$ and $\{\bar{N}, \bar{\nu} \}$ such that the following relations are satisfied \cite{RS}:
\begin{align*}
&\bar{N} = (\cos\theta) N + (\sin\theta)\nu, \\
&\bar{\nu} = -(\sin\theta)N + (\cos\theta)\nu.
\end{align*}
Equivalently,
\begin{align*}
&N = -(\sin\theta)\bar{\nu} + (\cos\theta)\bar{N}, \\
&\nu = (\cos\theta)\bar{\nu} + (\sin\theta)\bar{N},
\end{align*}
where $\theta$ is the contact angle between $\Sigma$ and $\partial M$. For a given angle $\theta \in (0,\pi)$, we define the \textit{energy functional} $E: (-\epsilon,\epsilon) \rightarrow \mathbb{R}$ by
\begin{align*}
E(t) = A(t) - (\cos\theta)W(t),
\end{align*}
where $A(t)$ is the \textit{area functional} and $W(t)$ is the \textit{wetting functional}. Then the first variation formula of the energy functional is given by the following formula (cf. \cite{LON, RS}):
\begin{align} \label{first var of ener}
E'(0) = \int_{\Sigma}Hf \, dv+ \int_{\partial\Sigma} \langle X, \nu - (\cos\theta)\bar{\nu} \rangle \, ds,
\end{align}
where $f = \langle X,N \rangle$, $dv$ is the volume element on $\Sigma$, and $X$ is the variational vector field (cf. Appendix in \cite{RS}). We say that $\Sigma$ is an \textit{energy minimizing} if its energy is less than or equal to that of any nearby surface.

We now introduce the notion of stability for MOTS with capillary boundary. For closed MOTS, a notion of stability was introduced by Andersson, Mars, and Simon \cite{AMS2, AMS}. This concept was later extended to MOTS with capillary boundary by Alaee, Lesourd, and Yau in \cite{ALY}, which we now recall.

Let $\Sigma$ be an $n$-dimensional MOTS with capillary boundary in an $(n+1)$-dimensional initial data set $(M, g, h^{M})$ with boundary $\partial M$. Consider a smooth embedding map $\Phi: \Sigma \times [0,\epsilon) \rightarrow M$, $\epsilon > 0$, referred to as a $\textit{variation}$. This defines a one-parameter family $\Phi(\Sigma, t) = \Phi_{t}(\Sigma) = \Sigma_{t} \subset M$, satisfying $\Sigma_{0} = \Sigma$. The associated variation vector field is given by $X_{t} = \frac{\partial\Phi}{\partial t}$, with $X = X_{0} = \frac{\partial\Phi}{\partial t}\vert_{t=0} = \varphi N$, where $\varphi \in C^{\infty}(\Sigma)$ and $N$ is the unit normal vector field to $\Sigma$. Following \cite{ALY}, we define the functional:
\begin{align*}
F[\Sigma_{t}] = \int_{\Sigma_{t}}\Theta^{+}(t)\langle X_{t}, N_{t} \rangle \, dv + \int_{\partial\Sigma_{t}}\langle X_{t}, \nu_{t} - \cos\theta\bar{\nu}_{t} \rangle \, ds,
\end{align*}
where $\Theta^{+}(t)$ denotes the null mean curvature of $\Sigma_{t}$. Note that this functional closely resembles the first variation of the energy functional given in (\ref{first var of ener}). A computation, as in \cite{ALY, AMS}, yields the following formula for the first variation of $\Theta^{+}(t)$:
\begin{align} \label{first var null mean}
\frac{d\Theta^{+}(t)}{dt}\Bigr|_{t = 0} = -\Delta_{\Sigma}\varphi + 2\langle W,\nabla^{\Sigma}\varphi \rangle + \left({\rm div}_{\Sigma}W - \vert W \vert^{2} + Q \right)\varphi
\end{align}
and the variation of the functional $F[\Sigma_{t}]$ becomes:
\begin{align} \label{first var func}
\frac{d}{dt}\Big{|}_{t=0} F[\Sigma_{t}] = &\int_{\Sigma}\left(\vert \nabla^{\Sigma}\varphi \vert^{2} + 2\varphi\langle W, \nabla^{\Sigma}\varphi\rangle + \left({\rm div}_{\Sigma}W - \vert W \vert^{2} + Q\right)\varphi^{2}\right) dv \\
&-\int_{\partial\Sigma} q\varphi^{2} \, ds, \nonumber
\end{align}
where $Q = \frac{1}{2}R^{\Sigma} - \left(\mu + J(N)\right) - \frac{1}{2}\vert \chi_{+} \vert^{2}$ and $q = -(\cot\theta) h^{\Sigma}(\nu,\nu) + \frac{1}{\sin\theta}h^{\partial M}(\bar{\nu},\bar{\nu})$ for all $\varphi \in C^{\infty}(\Sigma)$. Additionally, the vector field $W$ on $\Sigma$ is the tangent vector field dual to the 1-form $h^{M}(N,\cdot)$. Then the corresponding eigenvalue problem associated with the functional (\ref{first var func}) is given by:
\begin{align} \label{stable ope}
\begin{cases}
&L\varphi := -\Delta_{\Sigma}\varphi + 2\langle W,\nabla^{\Sigma}\varphi \rangle + \left({\rm div}_{\Sigma}W - \vert W \vert^{2} + Q \right)\varphi = \lambda \varphi \quad {\rm on} \,\, \Sigma, \\
&B\varphi := \frac{\partial\varphi}{\partial\nu} - q\varphi = 0 \quad {\rm along} \,\, \partial\Sigma.
\end{cases}
\end{align}
The operator $\mathcal{L} := (L,B)$ is called \textit{stability operator} with Robin-type boundary condition. A compact, MOTS with capillary boundary $\Sigma$ in an initial data set $(M,g,h^{M})$ is said to be \textit{stable} if there exists a non-negative function $\varphi \in C^{\infty}(\Sigma)$, $\varphi \neq 0$, satisfying the boundary condition $B\varphi = 0$ such that $L\varphi \geq 0$. By the maximum principle, we may assume without loss of generality that $\varphi > 0$ on $\Sigma$.

We now present the boundary conditions for an initial data set $(M, g, h^{M})$ when MOTS with capillary boundary is embedded in an initial data set with boundary. First, the second fundamental form of $\partial M$ in $(M,g)$ is defined by
\begin{align*}
h^{\partial M}(X,Y) = \langle \nabla^{M}_{X_{\partial M}} \bar{N},Y_{\partial M} \rangle,
\end{align*}
where $X_{\partial M},Y_{\partial M} \in \mathfrak{X}(\partial M)$ and $\bar{N}$ is the unit normal of $\partial M$ in $(M,g)$. We say that the initial data set $(M, g, h^{M})$ satisfies the \textit{tilted dominant boundary energy condition} if the following inequality holds on $\partial M$:
\begin{align*}
H^{\partial M} + (\cos\theta){\rm tr}_{\partial M}h^{M} \geq \sin\theta\vert h^{M}(\bar{N}, \cdot)^{T} \vert,
\end{align*}
where $h^{M}(\bar{N},\cdot)^{T}$ is the component of the 1-form $h^{M}(\bar{N},\cdot)$ tangential to $\partial M$. This condition was introduced by Chai \cite{CH, CW}. In the special case $\theta = \frac{\pi}{2}$, it coincides with the dominant boundary condition introduced in \cite{ALM}.

Finally, we introduce the definition of weakly outermost and outermost. Let $\Sigma$ be a MOTS that separates the initial data set $(M, g, h^{M})$. Denote by $M_{+}$ the region consisting of $\Sigma$ and the region outside of $\Sigma$. We say that $\Sigma$ is \textit{weakly outermost} if there exists no outer trap surface $(\Theta^{+} < 0)$ in $M_{+}$ that is homologous to $\Sigma$. Similarly, $\Sigma$ is said to be \textit{outermost} if there exists no weakly outer trap surface $(\Theta^{+} \leq 0)$ in $M_{+}$ that is homologous to $\Sigma$.

\section{Auxiliary results}

In general, the operator $L$ defined in (\ref{stable ope}) of Section 2 is not self-adjoint. To address this, we consider the ``symmetrized'' operator $L_{s}$. For closed MOTS, This operator was first observed by Galloway and Schoen \cite{GA, GS}. The following lemma extends the results associated with the symmetrized operator from the closed case to the setting of MOTS with capillary boundary.

\begin{lemma} \label{lem 3.1}
Let $(M,g,h^{M})$ be an $(n+1)$-dimensional, initial data set with boundary $\partial M$ and $\Sigma$ be a compact, MOTS with capillary boundary in $(M,g,h^{M})$. If $\Sigma^{n}$ is a stable MOTS, then the first eigenvalue $\lambda_{1}(L_{s})$ of $L_{s}:= -\Delta_{\Sigma} + Q$ on $\Sigma$ with Robin-type boundary condition $B_{s}\varphi := \frac{\partial\varphi}{\partial\nu} - (q - \langle W,\nu \rangle)\varphi = 0$ is non-negative.
\end{lemma}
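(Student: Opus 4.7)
The plan is to adapt the classical symmetrization trick of Galloway--Schoen to the capillary setting. By stability, pick a positive $\varphi\in C^{\infty}(\Sigma)$ with $L\varphi\ge 0$ on $\Sigma$ and $B\varphi=0$ on $\partial\Sigma$, and set $u=\ln\varphi$. A direct substitution of $\varphi=e^{u}$ into the formula for $L$ in \eqref{stable ope} gives the pointwise identity
\begin{align*}
\frac{L\varphi}{\varphi} \;=\; -\Delta_{\Sigma}u \;-\; |\nabla^{\Sigma}u - W|^{2} \;+\; \mathrm{div}_{\Sigma}W \;+\; Q \;\ge\; 0,
\end{align*}
so that $\Delta_{\Sigma}u - \mathrm{div}_{\Sigma}W \;\le\; Q - |\nabla^{\Sigma}u - W|^{2}$. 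The boundary condition $B\varphi=0$ translates into the clean Neumann-type identity $\partial u/\partial\nu = q$ along $\partial\Sigma$.

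Next I would test the Rayleigh quotient of $L_{s}$ against an arbitrary $\psi\in C^{\infty}(\Sigma)$ by integrating the divergence of the vector field $\psi^{2}(\nabla^{\Sigma}u - W)$. Expanding,
\begin{align*}
\mathrm{div}_{\Sigma}\bigl(\psi^{2}(\nabla^{\Sigma}u - W)\bigr) \;=\; 2\psi\,\langle\nabla^{\Sigma}\psi,\nabla^{\Sigma}u - W\rangle \;+\; \psi^{2}\bigl(\Delta_{\Sigma}u - \mathrm{div}_{\Sigma}W\bigr),
\end{align*}
and using the pointwise bound above together with the Cauchy--Schwarz / Young inequality $2\psi\langle\nabla^{\Sigma}\psi,\nabla^{\Sigma}u - W\rangle \le |\nabla^{\Sigma}\psi|^{2} + \psi^{2}|\nabla^{\Sigma}u - W|^{2}$, the quadratic term in $|\nabla^{\Sigma}u - W|^{2}$ cancels and one obtains the pointwise estimate
\begin{align*}
\mathrm{div}_{\Sigma}\bigl(\psi^{2}(\nabla^{\Sigma}u - W)\bigr) \;\le\; |\nabla^{\Sigma}\psi|^{2} + Q\psi^{2}.
\end{align*}

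Integrating over $\Sigma$ and applying the divergence theorem together with $\partial u/\partial\nu = q$ on $\partial\Sigma$ gives
\begin{align*}
\int_{\partial\Sigma}\bigl(q - \langle W,\nu\rangle\bigr)\psi^{2}\,ds \;\le\; \int_{\Sigma}\bigl(|\nabla^{\Sigma}\psi|^{2} + Q\psi^{2}\bigr)\,dv.
\end{align*}
The right-hand side is precisely the Dirichlet quadratic form for $L_{s}$ with the Robin boundary contribution $B_{s}\psi=0$ absorbed via $q-\langle W,\nu\rangle$; hence the Rayleigh quotient of $L_{s}$ is non-negative on every test function, which by the standard variational characterization yields $\lambda_{1}(L_{s})\ge 0$.

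The main technical point, and the only place where the capillary boundary differs from the closed case of Galloway--Schoen, is producing the correct boundary Robin coefficient $q-\langle W,\nu\rangle$ for $L_{s}$: this is forced by the divergence theorem applied to $\psi^{2}(\nabla^{\Sigma}u - W)$ and the relation $\partial u/\partial\nu=q$ coming from $B\varphi=0$. Everything else, including the interior pointwise inequality and the Young-type absorption of the cross term, is routine once the substitution $\varphi=e^{u}$ is in place.
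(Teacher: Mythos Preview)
Your proof is correct and follows essentially the same approach as the paper's: both use the Galloway--Schoen symmetrization by writing $L\varphi/\varphi$ in terms of the vector field $W-\nabla^{\Sigma}\ln\varphi$, multiplying by the square of a test function, completing the square to absorb the cross term, and integrating using the divergence theorem together with $B\varphi=0$ to produce the correct Robin coefficient $q-\langle W,\nu\rangle$ on the boundary. The only cosmetic difference is that you test against an arbitrary $\psi$ and conclude via the Rayleigh quotient, while the paper tests specifically against the first eigenfunction of $L_{s}$ and reads off $\lambda_{1}(L_{s})\ge 0$ directly; these are equivalent.
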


\begin{proof}
Let $\varphi \in C^{\infty}(\Sigma)$ be a positive function satisfying the Robin-type boundary condition $B\varphi = 0$ such that $L\varphi \geq 0$. Then
\begin{align} \label{eq 3.1}
\frac{L\varphi}{\varphi} &= -\frac{\Delta_{\Sigma}\varphi}{\varphi} + \frac{2}{\varphi}\langle W, \nabla^{\Sigma}\varphi \rangle + \left({\rm div}_{\Sigma}W - \vert W \vert^{2} + Q\right) \\
&= - \frac{\Delta_{\Sigma}\varphi}{\varphi} + {\rm div}_{\Sigma}W + Q - \vert W - \nabla^{\Sigma}{\rm ln} \, \varphi \vert^{2} + \vert \nabla^{\Sigma}{\rm ln} \, \varphi \vert^{2} \nonumber \\
&= {\rm div}_{\Sigma}Y + Q - \vert Y \vert^{2}, \nonumber
\end{align}
where $Y = W - \nabla^{\Sigma}{\rm ln} \, \varphi$. Multiplying the equation (\ref{eq 3.1}) by $\phi^{2}$, then we have
\begin{align*}
\phi^{2}\frac{L\varphi}{\varphi} &= \phi^{2}{\rm div}_{\Sigma}Y + \phi^{2} Q - \phi^{2}\vert Y \vert^{2} \\
&= {\rm div}_{\Sigma}(\phi^{2}Y) - 2\phi \langle \nabla^{\Sigma}\phi, Y \rangle + Q\phi^{2} - \vert QY \vert^{2} \\
&= {\rm div}_{\Sigma}(\phi^{2}Y) + Q\phi^{2} - \vert QY + \nabla^{\Sigma}\phi \vert^{2} + \vert \nabla^{\Sigma}\phi \vert^{2} \\
&\leq {\rm div}_{\Sigma}(\phi^{2} Y) + \vert \nabla^{\Sigma}\phi \vert^{2} + Q\phi^{2}
\end{align*}
for any smooth function $\phi \in C^{\infty}(\Sigma)$. Let $\phi$ as a first eigenfunction of $L_{s}$, $L_{s}\phi = \lambda_{1}(L_{s})\phi$, with $B_{s}\phi = 0$. Using $L\varphi \geq 0$, we obtain
\begin{align*}
0 \leq& \int_{\Sigma}\left(\vert \nabla^{\Sigma}\phi \vert^{2} + Q\phi^{2}\right)dv + \int_{\partial\Sigma}\phi^{2}\left(\langle W,\nu \rangle - \frac{1}{\varphi}\frac{\partial\varphi}{\partial\nu}\right)ds \\
=& \int_{\Sigma}\left(\vert \nabla^{\Sigma}\phi \vert^{2} + Q\phi^{2}\right)dv + \int_{\partial\Sigma}\phi^{2}\left(\langle W,\nu \rangle - q\right)ds \\
=& \int_{\Sigma} \left(\vert \nabla^{\Sigma} \phi \vert^{2} + Q\phi^{2}\right)dv - \int_{\partial\Sigma}\frac{\partial\phi}{\partial\nu}\phi \, ds, \\
=& \lambda_{1}(L_{s})\int_{\Sigma}\phi^{2} dv,
\end{align*}
where we used $B\varphi = 0$ and $\phi B_{s}\phi = 0$. This implies that $\lambda_{1}(L_{s}) \geq 0$.

\end{proof}

 If we consider the Rayleigh formula, then the first eigenvalue of the operator $L_{s}$ as follows:
\begin{align} \label{sym eigen}
\lambda_{1}(L_{s}) = \inf_{\varphi \in C^{\infty}\backslash \{0\}} \frac{\int_{\Sigma}\left(\vert \nabla^{\Sigma}\varphi \vert^{2} + Q\varphi^{2}\right)dv - \int_{\partial\Sigma} \left(q - \langle W, \nu \rangle\right)\varphi^{2} ds}{\int_{\Sigma}\varphi^{2}dv},
\end{align}
where $Q = \frac{1}{2}R^{\Sigma} - \left(\mu + J(N)\right) - \frac{1}{2}\vert \chi_{+} \vert^{2}$ and $q = -(\cot\theta) h^{\Sigma}(\nu,\nu) + \frac{1}{\sin\theta}h^{\partial M}(\bar{\nu},\bar{\nu})$.

The following lemma simplifies $q$.

\begin{lemma} \label{lem 3.2}
Let $(M, g, h^{M})$ be an $(n+1)$-dimensional initial data set with boundary $\partial M$ and $\Sigma^{n}$ be a hypersurface with capillary boundary in $M$ with contact angle $\theta \in (0,\pi)$. Then
\begin{align} \label{eq 3.2}
h^{\partial M}(\bar{\nu},\bar{\nu}) - (\cos\theta)h^{\Sigma}(\nu,\nu) + (\sin\theta)H^{\partial\Sigma} = H^{\partial M} - (\cos\theta)H.  
\end{align}
In particular, if $n = 2$, then $H^{\partial\Sigma} = k^{\partial\Sigma}$, where $k^{\partial\Sigma}$ is the geodesic curvature of $\partial\Sigma$ in $\Sigma$.
\end{lemma}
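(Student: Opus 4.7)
The plan is to pick an adapted orthonormal frame along $\partial\Sigma$ and decompose the mean curvatures of $\Sigma$ and $\partial M$ by separating the component that is tangent to $\partial\Sigma$ from the remaining (outward-normal) direction. At a point $p\in\partial\Sigma$, I choose an orthonormal frame $\{e_1,\dots,e_{n-1}\}$ for $T_p(\partial\Sigma)$. Because $\nu\perp\partial\Sigma$ in $T\Sigma$ and $\bar{\nu}\perp\partial\Sigma$ in $T\partial M$, the frames $\{e_1,\dots,e_{n-1},\nu\}$ and $\{e_1,\dots,e_{n-1},\bar{\nu}\}$ are orthonormal bases for $T_p\Sigma$ and $T_p\partial M$, respectively. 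Thus
\begin{align*}
H = \sum_{i=1}^{n-1} h^{\Sigma}(e_i,e_i) + h^{\Sigma}(\nu,\nu), \qquad H^{\partial M} = \sum_{i=1}^{n-1} h^{\partial M}(e_i,e_i) + h^{\partial M}(\bar{\nu},\bar{\nu}).
\end{align*}

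Next I would use the capillary relation $\bar{N} = (\cos\theta)N + (\sin\theta)\nu$ to rewrite each diagonal entry $h^{\partial M}(e_i,e_i) = \langle \nabla^{M}_{e_i}\bar{N},e_i\rangle$ as
\begin{align*}
h^{\partial M}(e_i,e_i) = (\cos\theta)\langle \nabla^{M}_{e_i} N,e_i\rangle + (\sin\theta)\langle \nabla^{M}_{e_i}\nu,e_i\rangle.
\end{align*}
The first term is exactly $(\cos\theta)\,h^{\Sigma}(e_i,e_i)$. For the second, since both $e_i$ and $\nu$ are tangent to $\Sigma$ at $p$, the Gauss formula gives $\nabla^{M}_{e_i}\nu = \nabla^{\Sigma}_{e_i}\nu + h^{\Sigma}(e_i,\nu)N$; pairing with $e_i\in T(\partial\Sigma)$ kills the normal piece and, by the definition of the second fundamental form of $\partial\Sigma \hookrightarrow \Sigma$ with outward normal $\nu$, leaves $h^{\partial\Sigma}(e_i,e_i)$. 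Summing over $i$ then yields
\begin{align*}
\sum_{i=1}^{n-1} h^{\partial M}(e_i,e_i) = (\cos\theta)\sum_{i=1}^{n-1} h^{\Sigma}(e_i,e_i) + (\sin\theta)\,H^{\partial\Sigma}.
\end{align*}

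Finally I substitute $\sum_i h^{\Sigma}(e_i,e_i) = H - h^{\Sigma}(\nu,\nu)$ and $\sum_i h^{\partial M}(e_i,e_i) = H^{\partial M} - h^{\partial M}(\bar{\nu},\bar{\nu})$ into the previous identity and rearrange to obtain \eqref{eq 3.2}. For the $n=2$ case, $\partial\Sigma$ is a curve in the surface $\Sigma$, so the scalar $H^{\partial\Sigma}=\langle \nabla^{\Sigma}_{e_1}\nu,e_1\rangle$ reduces to the geodesic curvature $k^{\partial\Sigma}$ of $\partial\Sigma$ in $\Sigma$. There is no genuine obstacle here beyond bookkeeping; the only thing to watch carefully is the sign convention for $\nu$ and $\bar{\nu}$, ensuring that the outward normal to $\partial\Sigma$ used in defining $h^{\partial\Sigma}$ is consistent with the one appearing in the capillary decomposition of $\bar{N}$.
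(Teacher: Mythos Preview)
Your proof is correct and follows essentially the same route as the paper: choose an orthonormal frame $\{e_1,\dots,e_{n-1}\}$ for $\partial\Sigma$, decompose $H$ and $H^{\partial M}$ by splitting off the $\nu$-- and $\bar\nu$--directions, and use the capillary relation $\bar N=(\cos\theta)N+(\sin\theta)\nu$ to convert $\sum_i h^{\partial M}(e_i,e_i)$ into $(\cos\theta)\sum_i h^{\Sigma}(e_i,e_i)+(\sin\theta)H^{\partial\Sigma}$. Your write-up is in fact slightly more explicit than the paper's, spelling out the Gauss-formula justification that $\langle\nabla^{M}_{e_i}\nu,e_i\rangle=h^{\partial\Sigma}(e_i,e_i)$.
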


\begin{proof}
Let $\{T_{1}, \cdots, T_{n-1}\}$ be a local orthonormal frame for $\partial\Sigma$. Since $h^{\Sigma}(\nu,\nu) = H - \sum^{n-1}_{i=1}h^{\Sigma}(T_{i},T_{i})$, we have
\begin{align*}
-(\cos\theta)h^{\Sigma}(\nu,\nu) + (\cos\theta)H + (\sin\theta)H^{\partial\Sigma} &= (\cos\theta)\sum^{n-1}_{i=1}h^{\Sigma}(T_{i},T_{i}) + (\sin\theta)H^{\partial\Sigma} \\
&= (\cos\theta)\sum^{n-1}_{i=1}\langle \nabla^{M}_{T_{i}}N, T_{i} \rangle + (\sin\theta)\sum^{n-1}_{i=1}\langle \nabla^{M}_{T_{i}} \nu, T_{i} \rangle \\
&= \sum^{n-1}_{i=1}h^{\partial M}(T_{i},T_{i}).
\end{align*}
Thus we obtain
\begin{align*}
-(\cos\theta)h^{\Sigma}(\nu,\nu) + (\cos\theta)H + (\sin\theta)H^{\partial\Sigma} + h^{\partial M}(\bar{\nu},\bar{\nu}) &= \sum^{n-1}_{i=1}h^{\partial M}(T_{i},T_{i}) + h^{\partial M}(\bar{\nu},\bar{\nu}) \\
&= H^{\partial M}.
\end{align*}
\end{proof}
Considering Lemma \ref{lem 3.2}, we write $q$ as follows
\begin{align} \label{simple q}
q = -(\cot\theta)h^{\Sigma}(\nu,\nu) + \frac{1}{\sin\theta}h^{\partial M}(\bar{\nu},\bar{\nu}) = \frac{1}{\sin\theta}\left(H^{\partial M} - (\cos\theta)H - (\sin\theta)H^{\partial\Sigma}\right).    
\end{align}

\section{3-dimensional initial data set}

In this section, we establish area estimates for stable MOTS with capillary boundary and prove a rigidity result for 3-dimensional initial data sets. We begin by presenting the area estimates.

\begin{theorem} [Theorem \ref{main thm 1}] \label{main thm 1-1}
Let $(M, g, h^{M})$ be a 3-dimensional initial data set with boundary and $\Sigma^{2}$ be a compact stable MOTS with capillary boundary in $(M, g, h^{M})$. Assume that $\mu + J(N) \geq C$ for $C \in \mathbb{R}$ and $(M, g, h^{M})$ satisfies the tilted dominant boundary energy condition. Then
\begin{align} \label{area est 1-1}
A(\Sigma)C \leq 2\pi\chi(\Sigma),
\end{align}
where $\chi(\Sigma)$ is the Euler characteristic. If equality holds in (\ref{area est 1-1}), then we have
\begin{itemize}
\item[(1)] $\mu + J(N) = C$, $\chi^{+} = 0$, and the Gaussian curvature $K^{\Sigma} = C$ on $\Sigma$,
\item[(2)] $\lambda_{1}(L_{s}) = \lambda_{1}(L) = 0$ on $\Sigma$,
\item[(3)] $H^{\partial M} + (\cos\theta) {\rm tr}_{\partial M}h^{M} = \sin\theta\vert h^{M}(\bar{N},\cdot)^{T} \vert$ and the geodesic curvature $k^{\partial\Sigma} = 0$ along $\partial\Sigma$ in $\Sigma$.
\end{itemize}
\end{theorem}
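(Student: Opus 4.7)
The plan is to apply Lemma \ref{lem 3.1} to the symmetrized operator $L_s$ and test the Rayleigh characterization \eqref{sym eigen} against the constant function $\varphi \equiv 1$. Since stability yields $\lambda_1(L_s) \geq 0$, this test gives
\begin{align*}
0 \leq \int_\Sigma Q\, dv - \int_{\partial\Sigma}(q - \langle W, \nu\rangle)\, ds.
\end{align*}
Substituting $Q = K^\Sigma - (\mu + J(N)) - \tfrac{1}{2}|\chi^+|^2$ (using $R^\Sigma = 2K^\Sigma$ in dimension two) and applying Gauss-Bonnet then rearranges this into
\begin{align*}
\int_\Sigma(\mu + J(N))\, dv + \tfrac{1}{2}\int_\Sigma |\chi^+|^2\, dv + \int_{\partial\Sigma}(k^{\partial\Sigma} + q - \langle W, \nu\rangle)\, ds \leq 2\pi\chi(\Sigma).
\end{align*}

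The central task is then the pointwise boundary inequality $k^{\partial\Sigma} + q - \langle W, \nu\rangle \geq 0$. I plan to feed the MOTS relation $H = -\text{tr}_\Sigma h^M$ into the simplified form \eqref{simple q} of $q$, decompose $\text{tr}_\Sigma h^M = \text{tr}_{\partial M}h^M + h^M(\nu,\nu) - h^M(\bar{\nu}, \bar{\nu})$ via the common codimension-one subspace $T\partial\Sigma$, and expand $N$ and $\nu$ in the $\{\bar{\nu}, \bar{N}\}$-basis of the normal plane. A trigonometric collapse should then produce the clean identity
\begin{align*}
\sin\theta\, (k^{\partial\Sigma} + q - \langle W, \nu\rangle) = \bigl(H^{\partial M} + \cos\theta\, \text{tr}_{\partial M}h^M\bigr) + \sin\theta\, h^M(\bar{\nu}, \bar{N}).
\end{align*}
The tilted dominant boundary energy condition bounds the first parenthesis below by $\sin\theta\, |h^M(\bar{N}, \cdot)^T|$, and since $|h^M(\bar{N}, \cdot)^T| \geq |h^M(\bar{N}, \bar{\nu})| \geq -h^M(\bar{\nu}, \bar{N})$ this absorbs the remaining term. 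The main obstacle will be the trigonometric bookkeeping matching $\text{tr}_\Sigma h^M$ to the TDBEC quantity $\text{tr}_{\partial M}h^M$; once in place, $\mu + J(N) \geq C$ and $|\chi^+|^2 \geq 0$ deliver $CA(\Sigma) \leq 2\pi\chi(\Sigma)$.

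For the rigidity statements I trace each inequality backward to an equality. Saturation forces $\mu + J(N) \equiv C$, $\chi^+ \equiv 0$, and $k^{\partial\Sigma} + q - \langle W, \nu\rangle \equiv 0$ along $\partial\Sigma$. Then $\varphi \equiv 1$ attains $\lambda_1(L_s) = 0$ in the Rayleigh quotient and so is a first eigenfunction: $L_s 1 = -\Delta 1 + Q = 0$ forces $Q \equiv 0$, whence $K^\Sigma = C$, while the Robin condition $B_s 1 = 0$ gives $q - \langle W, \nu\rangle \equiv 0$, which combined with the vanishing of the integrand yields $k^{\partial\Sigma} = 0$. For $\lambda_1(L) = 0$ I would revisit the symmetrization chain in the proof of Lemma \ref{lem 3.1} applied to the first $L_s$-eigenfunction: the substitution $Y = W - \nabla^\Sigma \ln\varphi$ still runs, and using the first $L$-eigenfunction $\varphi$ with $L\varphi = \lambda_1(L)\varphi$ shows $\lambda_1(L) \leq \lambda_1(L_s) = 0$, while stability supplies $\lambda_1(L) \geq 0$. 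Finally, equality along the boundary chain forces saturation of both the TDBEC and $|h^M(\bar{N}, \cdot)^T| \geq |h^M(\bar{N}, \bar{\nu})|$, producing item (3).
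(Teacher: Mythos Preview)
Your proposal is correct and follows essentially the same route as the paper: test the symmetrized Rayleigh quotient with $\varphi\equiv 1$, rewrite the boundary integrand via Lemma~\ref{lem 3.2} and the MOTS condition $H=-{\rm tr}_\Sigma h^M$, and reduce the key boundary step to the identity $\sin\theta\,(k^{\partial\Sigma}+q-\langle W,\nu\rangle)=H^{\partial M}+(\cos\theta){\rm tr}_{\partial M}h^M+(\sin\theta)h^M(\bar N,\bar\nu)$, which is exactly the paper's claim \eqref{tilted} repackaged. Your treatment of $\lambda_1(L)=0$ via the general inequality $\lambda_1(L)\le\lambda_1(L_s)$ (obtained by rerunning the symmetrization with the principal $L$-eigenfunction) is slightly more explicit than the paper, which effectively defers this point to the argument of Lemma~\ref{lem 4.1}.
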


\begin{proof}
Since $\Sigma$ is a stable MOTS, it follow that $\lambda_{1}(L_{s}) \geq 0$ from the previous Lemma \ref{lem 3.1}. Then we have
\begin{align*}
0 \leq \inf_{\varphi \in C^{\infty}\backslash \{0\}}\frac{\int_{\Sigma}\left(\vert \nabla^{\Sigma}\varphi \vert^{2} + Q\varphi^{2}\right)dv - \int_{\partial\Sigma} \left(q - \langle W, \nu \rangle\right)\varphi^{2} ds}{\int_{\Sigma}\varphi^{2}dv}.
\end{align*}
If we take $\varphi = 1$, then we obtain
\begin{align*}
0 \leq \int_{\Sigma}Q \, dv - \int_{\partial\Sigma}\left(q - \langle W,\nu \rangle\right)ds.
\end{align*}
By the definition of $Q$ and the equation (\ref{simple q}), we get
\begin{align*}
0 \leq& \int_{\Sigma}K^{\Sigma} \, dv - \int_{\Sigma}\left(\mu + J(N)\right) dv - \frac{1}{2}\int_{\Sigma}\vert \chi^{+} \vert^{2}dv \\
&+\int_{\partial\Sigma}\left(-\frac{1}{\sin\theta}H^{\partial M} + \frac{\cos\theta}{\sin\theta}H + k^{\partial\Sigma} + \langle W,\nu \rangle\right)ds \\
\leq& \int_{\Sigma}K^{\Sigma} \, dv - \int_{\Sigma}\left(\mu + J(N)\right) dv + \int_{\partial\Sigma}k^{\partial\Sigma} ds \\
&+\int_{\partial\Sigma}-\frac{1}{\sin\theta}\left(H^{\partial M} + (\cos\theta){\rm tr}_{\Sigma}h^{M} - (\sin\theta)h^{M}(N,\nu)\right)ds,
\end{align*}
where $K^{\Sigma}$ is the Gaussian curvature of $\Sigma$ and $k^{\partial\Sigma}$ is the geodesic curvature of $\partial\Sigma$ in $\Sigma$. Now we claim that 
\begin{align} \label{tilted}
(\cos\theta){\rm tr}_{\Sigma}h^{M} - (\sin\theta)h^{M}(N,\nu) = (\cos\theta){\rm tr}_{\partial M}h^{M} + (\sin\theta)h^{M}(\bar{N},\bar{\nu}).
\end{align}Let $\{e_{1}, \nu\} \in T{\Sigma}$. Then we have
\begin{align*}
(\cos\theta){\rm tr}_{\Sigma}h^{M} - (\sin\theta)h^{M}(N,\nu) &= (\cos\theta)h^{M}(e_{1},e_{1}) + (\cos\theta)h^{M}(\nu,\nu) - (\sin\theta)h^{M}(N,\nu) \\
&= (\cos\theta)h^{M}(e_{1},e_{1}) + h^{M}((\cos\theta)\nu - (\sin\theta)N,\nu) \\
&= (\cos\theta)h^{M}(e_{1},e_{1}) + h^{M}(\bar{\nu},\nu) \\
&= (\cos\theta)\left(h^{M}(e_{1},e_{1}) + h^{M}(\bar{\nu},\bar{\nu})\right) + (\sin\theta)h^{M}(\bar{N},\bar{\nu}) \\
&= (\cos\theta){\rm tr}_{\partial M}h^{M} + (\sin\theta)h^{M}(\bar{N},\bar{\nu})
\end{align*}
(see Appendix in \cite{CH}). So we prove our claim. By the equation (\ref{tilted}) and the Gauss-Bonnet theorem, we obtain
\begin{align*}
0 &\leq 2\pi\chi(\Sigma) - \int_{\Sigma}\left(\mu + J(N)\right)dv \\
&\quad -\int_{\Sigma}\frac{1}{\sin\theta}\left(H^{\partial M} + (\cos\theta){\rm tr}_{\partial M}h^{M} + (\sin\theta)h^{M}(\bar{N},\bar{\nu})\right)dv.
\end{align*}
By the our dominant energy condition, we have
\begin{align*}
0 \leq 2\pi\chi(\Sigma) - A(\Sigma)C.
\end{align*}
This implies that
\begin{align} \label{eq 4.1}
A(\Sigma)C \leq 2\pi\chi(\Sigma).
\end{align}
If the equality holds in (\ref{eq 4.1}), then all inequalities above become equalities. So we deduce that $\mu + J(N) = C$, $\chi^{+} = 0$, and $Q = K^{\Sigma} - C$ on $\Sigma$. Moreover, $H^{\partial M} + (\cos\theta){\rm tr}_{\partial M}h^{M} = (\sin\theta)\vert h^{M}(\bar{N},\cdot)^{T} \vert = -(\sin\theta)h^{M}(\bar{N},\bar{\nu})$ along $\partial\Sigma$. This implies that $q - \langle W,\nu \rangle = -k^{\partial\Sigma}$. On the other hand, by Lemma \ref{lem 3.1}, we know that $\lambda_{1}(L_{s}) \geq 0$. But, if we take $\varphi = 1$, then the right hand side of (\ref{sym eigen}) is zero, which implies that $\lambda_{1}(L_{s}) = 0$ and $\varphi = 1$ is an associated eigenfunction. More precisely, $\varphi = 1$ is a solution to
\begin{align*}
\begin{cases}
-\Delta_{\Sigma}\varphi + \left(K^{\Sigma} - C\right)\varphi = 0 \\
\frac{\partial \varphi}{\partial\nu} + k^{\partial\Sigma}\varphi = 0.
\end{cases}
\end{align*}
Hence, we deduce that $K^{\Sigma} = C$ on $\Sigma$ and $k^{\partial\Sigma} = 0$ along $\partial\Sigma$.

\end{proof}

We now prove a lemma that will be used in the construction of the foliation. The following Lemma is analogous to Lemma 3.5 in \cite{M2}.

\begin{lemma} \label{lem 4.1}
Under the assumption of the Theorem \ref{main thm 1-1} and $A(\Sigma)C = 2\pi\chi(\Sigma)$, zero is a simple eigenvalue of $L$ on $\Sigma$ with Robin-type boundary condition $B\varphi = 0$ whose associated eigenfunction can be chosen to be positive. The same holds for the formal adjoint operator $L^{*}$ on $\Sigma$ with Robin-type boundary condition $B^{*}\varphi^{*} = 0$, where 
\begin{align*}
\begin{cases}
L^{*}\varphi = -\Delta_{\Sigma}\varphi - 2 \langle W,\nabla^{\Sigma}\varphi \rangle + \varphi\left(Q - \vert W \vert^{2} - {\rm div}_{\Sigma}W\right)\\
B^{*}\varphi = \frac{\partial\varphi}{\partial\nu} - \varphi\left(q - 2\langle W,\nu \rangle\right).
\end{cases}
\end{align*}
\end{lemma}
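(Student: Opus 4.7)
The plan is to follow the strategy of Lemma 3.5 in \cite{M2}, combining the Krein--Rutman-type principal eigenvalue theory for the non-self-adjoint MOTS stability operator with Robin-type boundary condition (cf.\ \cite{AMS, ALY}) with the equality-case information extracted in Theorem \ref{main thm 1-1}. By that theory, $L$ with boundary condition $B\varphi = 0$ admits a real principal eigenvalue $\lambda_{1}(L)$ which is simple, with a strictly positive associated eigenfunction $\psi \in C^{\infty}(\Sigma)$; moreover $\lambda_{1}(L^{*}) = \lambda_{1}(L)$ and the corresponding principal eigenfunction of $L^{*}$ with $B^{*}\psi^{*} = 0$ is likewise simple and positive. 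It therefore suffices to prove $\lambda_{1}(L) = 0$.

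For the lower bound $\lambda_{1}(L) \geq 0$, I would invoke the stability of $\Sigma$, which supplies a positive $\varphi \in C^{\infty}(\Sigma)$ with $L\varphi \geq 0$ and $B\varphi = 0$. The standard supremum characterization
\[
\lambda_{1}(L) = \sup\{\lambda \in \mathbb{R} : \exists\, u > 0 \text{ with } Lu \geq \lambda u,\ Bu = 0\}
\]
then gives $\lambda_{1}(L) \geq 0$. For the upper bound I would apply the pointwise identity from the proof of Lemma \ref{lem 3.1} to the principal eigenfunction $\psi$ itself: with $Y := W - \nabla^{\Sigma}\ln\psi$ one obtains $\lambda_{1}(L) = L\psi/\psi = {\rm div}_{\Sigma}Y + Q - |Y|^{2}$. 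Integrating over $\Sigma$, applying the divergence theorem, and using $\langle Y,\nu\rangle = \langle W,\nu\rangle - q$ along $\partial\Sigma$ (which follows from $B\psi = 0$), I get
\[
\lambda_{1}(L)\, A(\Sigma) \leq \int_{\Sigma} Q\, dv + \int_{\partial\Sigma}(\langle W,\nu\rangle - q)\, ds.
\]
By the equality case of Theorem \ref{main thm 1-1}, $Q = K^{\Sigma} - (\mu + J(N)) - \tfrac{1}{2}|\chi^{+}|^{2} = 0$ on $\Sigma$ and $q - \langle W,\nu\rangle = -k^{\partial\Sigma} = 0$ along $\partial\Sigma$, so the right-hand side vanishes and $\lambda_{1}(L) \leq 0$. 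Hence $\lambda_{1}(L) = 0$, simple with positive eigenfunction, and the conclusion for $(L^{*}, B^{*})$ follows immediately.

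The main subtlety lies not in either bound but in the initial appeal to the Krein--Rutman / Andersson--Mars--Simon machinery in the capillary Robin setting and for the adjoint pair $(L^{*}, B^{*})$. This reduces to checking that $L^{*}$ with $B^{*}$ is the genuine $L^{2}$ formal adjoint of $L$ with $B$, which is verified by a direct integration by parts giving
\[
\int_{\Sigma}\bigl(\psi L\varphi - \varphi L^{*}\psi\bigr) dv = \int_{\partial\Sigma}\Bigl(\varphi\tfrac{\partial\psi}{\partial\nu} - \psi\tfrac{\partial\varphi}{\partial\nu} + 2\psi\varphi\langle W,\nu\rangle\Bigr) ds,
\]
and vanishes whenever $B\varphi = 0$ and $B^{*}\psi = 0$; the remaining Fredholm/Krein--Rutman apparatus is by now standard in the MOTS literature and was adapted to the boundary setting in \cite{ALY, M2}.
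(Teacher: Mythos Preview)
Your argument is correct, but it is organized differently from the paper's. You outsource both the existence of a positive principal eigenfunction and the simplicity of $\lambda_{1}(L)$ (and of $\lambda_{1}(L^{*})$) to the Krein--Rutman/AMS framework, and then pin down $\lambda_{1}(L)=0$ by the two-sided bound coming from stability and from integrating $L\psi/\psi = {\rm div}_{\Sigma}Y + Q - |Y|^{2}$ against the equality-case information $Q=0$, $q-\langle W,\nu\rangle=0$. The paper instead works entirely by hand: it takes the stability function $\varphi>0$ with $L\varphi\geq 0$, $B\varphi=0$, integrates the same identity to force $Y = W - \nabla^{\Sigma}\ln\varphi = 0$, and hence $L\varphi=0$ directly; simplicity is then proved by the elementary quotient trick (if $L\phi=0$, $B\phi=0$, then $\phi/\varphi$ is harmonic with Neumann data, hence constant), and for the adjoint the relation $W=\nabla^{\Sigma}\ln\varphi$ is used to compute explicitly $L^{*}\varphi^{*}=-\varphi^{-1}\Delta_{\Sigma}(\varphi^{*}\varphi)$ and $B^{*}\varphi^{*}=\varphi^{-1}\partial_{\nu}(\varphi^{*}\varphi)$, so that the positive adjoint eigenfunction is $\varphi^{*}=c/\varphi$. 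Your route is cleaner conceptually and avoids the explicit computations, at the cost of invoking the Robin-boundary principal-eigenvalue machinery; the paper's route is self-contained and, as a byproduct, yields the explicit identities $W=\nabla^{\Sigma}\ln\varphi$ and $\varphi^{*}=c/\varphi$, which are not needed later but make the structure transparent.
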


\begin{proof}
By the Theorem \ref{main thm 1}, we know that $Q = 0$ on $\Sigma$ and $q - \langle W,\nu \rangle = 0$ along $\partial\Sigma$. Since $\Sigma$ is a stable MOTS, we take $0 < \varphi \in C^{\infty}(\Sigma)$. Then $\varphi$ is satisfied the following
\begin{align*}
\begin{cases}
L\varphi \geq 0 \,\, {\rm on} \,\, \Sigma, \\
B\varphi = 0 \,\, {\rm along} \,\, \partial\Sigma.
\end{cases}
\end{align*}
Then we have
\begin{align*}
0 \leq \frac{L\varphi}{\varphi} = - \vert W - \nabla^{\Sigma}{\rm ln}\varphi \vert^{2} + {\rm div}_{\Sigma}\left(W - \nabla^{\Sigma}{\rm ln}\varphi\right).
\end{align*}
So we get
\begin{align*}
\int_{\Sigma}\vert W - \nabla^{\Sigma}{\rm ln}\varphi \vert^{2}dv &\leq \int_{\Sigma}{\rm div}_{\Sigma}\left(W - \nabla^{\Sigma}{\rm ln}\varphi\right)dv \\
&= \int_{\partial\Sigma}\left(\langle W,\nu \rangle - \frac{1}{\varphi}\frac{\partial\varphi}{\partial\nu}\right)ds \\
&= \int_{\partial\Sigma}\left(\langle W,\nu \rangle - q\right)ds \\
&= 0,
\end{align*}
where the last equality uses item (3) of Theorem \ref{main thm 1}. Hence, we have $W = \nabla^{\Sigma}{\rm ln}\varphi$. This follows that $L\varphi = 0$. Therefore, we deduce that $\lambda = 0$ is an eigenvalue of $L$ with boundary condition $B\varphi = 0$. Now we prove that $\lambda = 0$ is a simple. Let $\phi \in C^{\infty}(\Sigma)$ be such that $L\phi = 0$ on $\Sigma$ and $B\phi = 0$ along $\partial\Sigma$. Define $\psi = \frac{\phi}{\varphi}$. By the simple calculation, we get
\begin{align*}
\Delta_{\Sigma}\psi &= \frac{\Delta_{\Sigma}\phi}{\varphi} - 2\frac{\langle \nabla^{\Sigma}\phi, \nabla^{\Sigma}\varphi \rangle}{\varphi^{2}} - \frac{\Delta_{\Sigma}\varphi\cdot\phi}{\varphi^{2}} + 2\frac{\vert \nabla^{\Sigma}\varphi \vert^{2}}{\varphi^{3}}Q \\
&= - \frac{1}{\varphi}L\phi \\
&= 0 \,\, {\rm on} \,\, \Sigma.
\end{align*}
and
\begin{align*}
\frac{\partial\psi}{\partial\nu} = \frac{1}{\varphi}\left(\frac{\partial\phi}{\partial\nu} - \frac{\phi}{\varphi}\frac{\partial\varphi}{\partial\nu}\right) = 0 \,\, {\rm along} \,\, \partial\Sigma.
\end{align*}
Hence, $\psi = \frac{\phi}{\varphi}$ is a constant. This implies that $\lambda = 0$ is a simple eigenvalue whose associated eigenfunction are given by $\phi = c\varphi$ for $c \in \mathbb{R}$. Next, we show that $\lambda^{*} = 0$ is a simple eigenvalue of $L^{*}$ on $\Sigma$ with Robin-type boundary condition $B^{*}\varphi^{*} = 0$. Let $\varphi^{*} \in C^{\infty}(\Sigma)$. Then we have
\begin{align*}
L^{*}\varphi^{*} &= - \Delta_{\Sigma}\varphi^{*} - 2\langle \frac{\nabla^{\Sigma}\varphi}{\varphi}, \nabla^{\Sigma}\varphi^{*} \rangle - \varphi^{*}\left(\frac{\vert \nabla^{\Sigma}\varphi \vert^{2}}{\varphi^{2}} + {\rm div}_{\Sigma}\left(\frac{\nabla^{\Sigma}\varphi}{\varphi}\right)\right) \\
&= \varphi\Delta_{\Sigma}\varphi^{*} + 2\langle \nabla^{\Sigma}\varphi^{*}, \nabla^{\Sigma}\varphi \rangle + \varphi^{*}\Delta_{\Sigma}\varphi \\
&= -\frac{\Delta_{\Sigma}\left(\varphi^{*}\varphi\right)}{\varphi}
\end{align*}
and
\begin{align*}
B^{*}\varphi^{*} &= \frac{\partial\phi^{*}}{\partial\nu} - \varphi^{*}\left( q - 2\langle W, \nu \rangle\right) \\
&= \frac{\partial\varphi^{*}}{\partial\nu} + \varphi^{*}\langle W,\nu \rangle \\
&= \frac{\partial\varphi^{*}}{\partial\nu} + q\varphi^{*} \\
&= \frac{1}{\varphi}\frac{\partial}{\partial\nu}\left(\varphi^{*}\varphi\right).
\end{align*}
It follows that $L^{*}\varphi^{*} = 0$ on $\Sigma$ with Robin-type boundary condition $B^{*}\varphi^{*} = 0$ if and only if $\varphi^{*}\varphi$ is a constant. Hence, $\lambda^{*} = 0$ is a simple eigenvalue of $L^{*}$ with $B^{*}\varphi^{*} = 0$ whose associated eigenfunction are given by $\varphi^{*} = \frac{c}{\varphi}$ for $c\in \mathbb{R}$.

\end{proof}

The following proposition shows that it is possible to construct a foliation by hypersurfaces with constant null mean curvature near a MOTS with capillary boundary in a 3-dimensional initial data set, under suitable conditions.

\begin{proposition} \label{foliation}
Under the assumptions of Theorem \ref{main thm 1-1} and $A(\Sigma)C = 2\pi\chi(\Sigma)$, there exist a neighborhood $V \cong (-\epsilon,\epsilon) \times \Sigma$ of $\Sigma = \{0\} \times \Sigma$ in $M^{3}$ and a positive function $\varphi_{positive}: V \rightarrow \mathbb{R}$ such that
\begin{itemize}
\item[(a)] $g|_{V} = \varphi^{2}_{positive}dt^{2} + \gamma_{t}$, where $\gamma_{t}$ is the induced metric on $\Sigma_{t} \cong \{t\} \times \Sigma$.
\item[(b)] Each $\Sigma_{t}$ is a capillary boundary hypersurface in $(M^{3}, g, h^{M})$ with contact angle equal to $\theta$ and constant null mean curvature $\Theta^{+}(t)$ with respect to the outward unit normal $N_{t} = \varphi^{-1}_{positive}\frac{\partial}{\partial t}$, where $N_{0} = N$.
\item[(c)] $\frac{\partial\varphi_{positive}}{\partial\nu_{t}} = q\varphi_{positive}$ along $\partial\Sigma_{t}$, where $\nu_{t}$ is the outward unit normal of $\partial\Sigma_{t}$ in $(\Sigma_{t}, \gamma_{t})$. 
\end{itemize}
\end{proposition}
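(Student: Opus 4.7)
The plan is to produce the family $\{\Sigma_t\}$ by an implicit function theorem argument adapted to the Fredholm structure of the stability pair $(L,B)$ furnished by Lemma \ref{lem 4.1}, and then to read off both the warped metric form (a) and the Robin equation (c) from this foliation.

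First, to parametrize nearby capillary hypersurfaces, I would fix a smooth vector field $Y$ on a tubular neighborhood of $\Sigma$ with $Y|_\Sigma = N$ and $Y$ tangent to $\partial M$, and let $\psi_s$ denote its flow. For each $u \in C^{2,\alpha}(\Sigma)$ of sufficiently small norm, the hypersurface $\Sigma_u := \{\psi_{u(p)}(p) : p \in \Sigma\}$ is properly embedded with $\partial\Sigma_u \subset \partial M$, and every $C^{2,\alpha}$-small perturbation with boundary on $\partial M$ arises this way. Write $\Theta^{+}(u)$ for the null mean curvature of $\Sigma_u$ and $\Theta(u)$ for its contact angle function along $\partial\Sigma_u$.

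Next, I would consider the map
\[
\mathcal{H}: C^{2,\alpha}(\Sigma)\times\mathbb{R} \longrightarrow C^{0,\alpha}(\Sigma)\times C^{1,\alpha}(\partial\Sigma)\times\mathbb{R},
\qquad \mathcal{H}(u,c) = \Bigl(\Theta^{+}(u)-c,\; \Theta(u)-\theta,\; {\textstyle\int_\Sigma} u\,\varphi^{*}\,dv\Bigr),
\]
where $\varphi^{*}=1/\varphi$ is the positive $L^{*}$-eigenfunction from Lemma \ref{lem 4.1}. The Fréchet derivative at $(0,0)$ is $(\phi,\beta)\mapsto(L\phi-\beta,\,B\phi,\,\int \phi\,\varphi^{*}\,dv)$, since the interior part linearizes $\Theta^{+}$ to $L$ by (\ref{first var null mean}) and the first-variation computation underlying (\ref{first var func}) identifies the linearization of $\Theta(u)-\theta$ with the Robin operator $B\phi$. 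Using the simplicity of the zero eigenvalue of both $(L,B)$ and $(L^{*},B^{*})$ established in Lemma \ref{lem 4.1}, together with a standard Green's-identity computation for Robin data, this derivative is both injective (the scalar $\beta$ must vanish on pairing against $\varphi^{*}$, after which the kernel direction $\varphi$ is eliminated by the integral constraint) and surjective. The IFT then produces a smooth curve $t\mapsto(u(t),c(t))$ with $u(0)=0$, $c(0)=0$, and $u'(0)$ a positive multiple of $\varphi$, such that $\mathcal{H}(u(t),c(t))=(0,0,t)$; each $\Sigma_t:=\Sigma_{u(t)}$ is then a capillary hypersurface with contact angle exactly $\theta$ and constant null mean curvature $\Theta^{+}(t):=c(t)$.

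Because $u'(0)$ is a positive multiple of $\varphi>0$, the family $\{\Sigma_t\}$ foliates a neighborhood $V$ of $\Sigma$, giving $V\cong(-\epsilon,\epsilon)\times\Sigma$. Reparametrizing so that $\partial_t$ is everywhere normal to the leaves, one has $\partial_t=\varphi_{\mathrm{positive}}N_t$ with $\varphi_{\mathrm{positive}}:=|\partial_t|_g>0$, which yields the warped form $g|_V=\varphi_{\mathrm{positive}}^{2}\,dt^{2}+\gamma_t$ of (a) and property (b). For (c), differentiating the preserved angle identity $\Theta(\Sigma_t)\equiv\theta$ in $t$ along $\partial\Sigma_t$ reproduces precisely the Robin operator $B$ applied to the lapse, forcing $\partial\varphi_{\mathrm{positive}}/\partial\nu_t = q\,\varphi_{\mathrm{positive}}$ along $\partial\Sigma_t$. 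The main obstacle is the non-invertibility of $(L,B)$, which has one-dimensional kernel $\mathbb{R}\varphi$ and cokernel $\mathbb{R}\varphi^{*}$ by Lemma \ref{lem 4.1}; the Lyapunov--Schmidt-type modification above, namely adding the free scalar $c$ in the target and the linear constraint $\int u\,\varphi^{*}\,dv$ in the range, is precisely what is needed to turn the linearization into a Banach-space isomorphism so that the IFT applies.
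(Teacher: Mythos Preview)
Your proposal is correct and follows essentially the same implicit-function-theorem strategy as the paper: parametrize nearby hypersurfaces via the flow of a vector field equal to $N$ on $\Sigma$ and tangent to $\partial M$, set up a map recording (null mean curvature minus a constant, deviation from contact angle $\theta$, a scalar integral constraint), verify that its linearization is a Banach isomorphism using the simplicity of the zero eigenvalue for $(L,B)$ and $(L^*,B^*)$ from Lemma~\ref{lem 4.1}, and then invoke the IFT. The only cosmetic difference is that the paper uses the unweighted integral $\int_\Sigma u\,dv$ as the third component of its map $\Xi$, whereas you use $\int_\Sigma u\,\varphi^*\,dv$; both choices kill the kernel direction $\mathbb{R}\varphi$ and the argument is otherwise identical.
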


\begin{proof}
Let $Z$ be a smooth vector field on $M^{3}$ such that $Z_{x} = N_{x}$ for $x \in \Sigma$ and $Z_{p} \in T_{p}\partial M$ for $p \in \partial M$. We denote by $\Psi = \Psi(p,t)$ the flow of $Z$. Since $\Sigma$ is a compact, there exists $\delta > 0$ such that $\Psi(x,t)$ is well-defined for $(x,t) \in \Sigma \times (-\delta,\delta)$. 

 Fix $0 < \alpha < 1$ and let $B_{\delta}(0) = \{\mathsf{b} \in C^{2,\alpha} \,\, \vert \,\, \Vert \mathsf{b}\Vert_{2,\alpha} < \delta\}$. We consider $\Sigma_{\mathsf{b}} = \{\Psi(x,\mathsf{b}(s)) \,\, \vert \,\, x \in \Sigma\}$ and $\Theta^{+}_{\mathsf{b}}$ is the null mean curvature of $\Sigma_{b}$ with respect to the outward unit normal $N_{\mathsf{\mathsf{b}}}$ of $\Sigma_{\mathsf{b}}$.

 We define the function $\Xi: B_{\delta}(0) \times \mathbb{R} \rightarrow C^{0,\alpha}(\Sigma) \times C^{1,\alpha}(\partial\Sigma) \times \mathbb{R}$ by
\begin{align*}
\Xi(\mathsf{b},\mathsf{k}) = \left(\Theta^{+}_{\mathsf{b}} - \mathsf{k}, \cos\theta - \langle N_{\mathsf{b}}, \bar{N}_{\mathsf{b}} \rangle, \int_{\Sigma}\mathsf{b} \, dv\right).
\end{align*}
Then we have (see Lemma A.2 in \cite{LI} and Lemma 3.3 in \cite{LON})
\begin{align*}
D\Xi_{(0,0)}(\mathsf{b},\mathsf{k}) = \frac{d}{ds}\Bigr|_{s = 0} \Xi(s\mathsf{b},s\mathsf{k}) = \left(L\mathsf{b} - \mathsf{k}, (\sin\theta)B\mathsf{b}, \int_{\Sigma}\mathsf{b} \, dv\right).
\end{align*}
First, we claim that $D\Xi_{(0,0)}$ is an isomorphism. If $\left(\mathsf{b},\mathsf{k}\right) \in {\rm ker}\left(D\Xi_{(0,0)}\right)$, then we obtain
\begin{align} \label{eq 4.2}
\begin{cases}
L\mathsf{b} = \mathsf{k} \,\, {\rm on} \,\, \Sigma, \\
B\mathsf{b} = 0 \,\, {\rm along} \,\, \partial\Sigma, \\
\int_{\Sigma} \mathsf{b} \, dv = 0.
\end{cases}
\end{align}
Let $\varphi^{*}$ be an eigenfunction of $L^{*}$ on $\Sigma$ with $B^{*}\varphi^{*} = 0$. Since Lemma \ref{lem 4.1}, there exists the eigenvalue $\lambda^{*} = 0$ of $L^{*}$ with $B^{*}\varphi^{*}$. Multiplying the equation (\ref{eq 4.2}) by $\varphi^{*}$ and integrating over $\Sigma$, we get
\begin{align*}
\mathsf{k}\int_{\Sigma}\varphi^{*} \, dv = \int_{\Sigma}\varphi^{*}L\mathsf{b} \, dv = \int_{\Sigma}\mathsf{b}L^{*}\varphi^{*} dv + \int_{\partial\Sigma}\mathsf{b}B^{*}\varphi^{*} ds - \int_{\partial\Sigma}\varphi^{*}B\mathsf{b} \, ds = 0.
\end{align*}
This implies that $\mathsf{k} = 0$. So we have that $L\mathsf{b} = 0$ on $\Sigma$, $B\mathsf{b} = 0$ along $\partial\Sigma$, and $\int_{\Sigma}\mathsf{b} \, dv = 0$. By the Lemma \ref{lem 4.1}, we get $\mathsf{b} = c\varphi$, where $c \in \mathbb{R}$ and $\varphi > 0$ is an eigenfunction of $L$ with $B\varphi = 0$, associated with the eigenvalue $\lambda = 0$. Since $\int_{\Sigma}\mathsf{b} \, dv = 0$, we have $\mathsf{b} = 0$. Hence, $D\Xi_{(0,0)}$ is injective. Next, we show that $D\Xi_{(0,0)}$ is onto. By the Fredholm alternative, the following problem
\begin{align*}
\begin{cases}
L\mathsf{b} = f_{1} \,\, {\rm on} \,\, \Sigma, \\
B\mathsf{b} = f_{2} \,\, {\rm along} \,\, \partial\Sigma
\end{cases}
\end{align*}
has a solution if and only if 
\begin{align*}
\int_{\Sigma}\varphi^{*}f_{1}dv + \int_{\partial\Sigma}\varphi^{*}f_{2}ds = 0.
\end{align*}
Let $\left(f_{1},(\sin\theta)f_{2},c\right) \in C^{0,\alpha}(\Sigma) \times C^{1,\alpha}(\partial\Sigma) \times \mathbb{R}$ and
\begin{align*}
\mathsf{k}_{0} = -\frac{\int_{\Sigma}\varphi^{*}f_{1}dv + \int_{\partial\Sigma}\varphi^{*}f_{2}ds}{\int_{\Sigma}\varphi^{*}dv}.
\end{align*}
We take $\mathsf{b}_{0} \in C^{2,\alpha}(\Sigma)$ such that $L\mathsf{b}_{0} = \mathsf{k}_{0} + f_{1}$ on $\Sigma$ and $B\mathsf{b}_{0} = f_{2}$ along $\partial\Sigma$. By the Fredholm alternative, there exists
\begin{align*}
\mathsf{t}_{0} = \frac{c - \int_{\Sigma}\mathsf{b}_{0}dv}{\int_{\Sigma}\varphi \, dv}.
\end{align*}
Then we obtain
\begin{align*}
D\Xi_{(0,0)}\left(\mathsf{b}_{0} + \mathsf{t}_{0}\varphi, \mathsf{k}_{0}\right) &= \frac{d}{ds}\Bigr|_{s = 0}\Xi(s\left(\mathsf{b}_{0} + t_{0}\varphi\right), s\mathsf{k}_{0}) \\
&= \left(f_{1}, (\sin\theta)f_{2}, c\right).
\end{align*}
It follows that $D\Xi_{(0,0)}$ is onto. Hence, $D\Xi_{(0,0)}$ is an isomorphim. Now we can apply inverse function theorem that for $s \in \mathbb{R}$ sufficiently small. By the inverse function theorem, there exists $\mathsf{b}(s) \in B_{\delta}(0)$ and $\mathsf{k}(s) \in \mathbb{R}$ such that $\Xi(\mathsf{b}(s),\mathsf{k}(s)) = (0,0,s)$ with $\mathsf{b}(0) = 0$ and $\mathsf{k}(0) = 0$. By the chain rule, we obtain
\begin{align*}
\left(L\mathsf{b}'(0) - \mathsf{k}'(0), (\sin\theta)B\mathsf{b}'(0), \int_{\Sigma}\mathsf{b}'(0) \, dv\right) &= D\Xi_{(0,0)}\left(\mathsf{b}'(0),\mathsf{k}'(0)\right) \\
&= \frac{d}{ds}\Bigr|_{s=0}\Xi(\mathsf{b}(s),\mathsf{k}(s)) = (0,0,1).
\end{align*}
By the same argument as our claim, we have $\mathsf{k}'(0) = 0$ and $\mathsf{b}'(0) = c\varphi$ for $c \in \mathbb{R}$. Since $\int_{\Sigma}\mathsf{b}'(0) \, dv = 1$, $c > 0$. For sufficiently small $\epsilon > 0$, we obtain that $\{\Sigma_{\mathsf{b}(s)}\}_{\vert s \vert < \epsilon}$ forms a foliation of a neighborhood of $\Sigma$ in $M$ with capillary boundary hypersurface and constant null mean curvature.

 Finally, if we choose the coordinated $(t, x^{i})$ in a neighborhood $V \cong (-\epsilon,\epsilon) \times \Sigma$, then each slice $\Sigma_{t} \cong \{t\} \times \Sigma$ is a capillary boundary hypersurface in $M$ with constant null mean curvature $\Theta^{+}(t)$ with respect to the unit normal $N_{t}$. Moreover, these coordinates can be chosen in a such way that $\frac{\partial}{\partial t} = \varphi_{positive}N_{t}$ on $\Sigma_{t}$. Note that item (c) follows from the capillary boundary condition (see Lemma A.2 in \cite{LI}).

\end{proof}

Let $(M, g, h^{M})$ be an $(n+1)$-dimensional initial data set with boundary. We denote that $h^{M}$ is \textit{$n$-convex} if ${\rm tr}_{\pi}h^{M} \geq 0$ for all $\pi \subset T_{p}M$ and $p \in M$, where $\pi$ is a linear subspace of dimension $n$ (cf. \cite{AM, M, M2}).

We now prove a rigidity result for 3-dimensional initial data sets with boundary.

\begin{theorem} [Theorem \ref{main thm 2}] \label{main thm 1-2}
Let $(M, g ,h^{M})$ be a 3-dimensional initial data sets with boundary and $\Sigma^{2}$ be a compact stable MOTS with capillary boundary in $(M, g, h^{M})$ that is weakly outermost in $M_{+}$. Assume that $\mu - \vert J \vert \geq C$ for $C \in \mathbb{R}$ and $(M, g, h^{M})$ satisfies the tilted dominant boundary energy condition, all of which hold on $M_{+}$. Suppose that the equality holds in (\ref{area est 1-1}).
\begin{itemize}
\item[(I)] If $C > 0$ and ${\rm tr}_{\pi}h^{M} \leq 0$ with $\theta \in [\frac{\pi}{2},\pi)$, then $\theta$ must be $\frac{\pi}{2}$.
\item[(II)] If $C < 0$ and 2-convex with $\theta \in (0,\frac{\pi}{2}]$, then $\theta$ must be $\frac{\pi}{2}$.
\item[(III)] If $C = 0$, 2-convex, and an energy-minimizing, then $\theta = \frac{\pi}{2}$ or not.
\end{itemize}
Moreover, the case of $\theta = \frac{\pi}{2}$, the following hold:
\begin{itemize}
\item[(i)] there exists an outer neighborhood $V$ of $\Sigma$ in $M$ such that $(V,g\vert_{V})$ is isometric to $([0,\epsilon) \times \Sigma, dt^{2} + \gamma)$, where $\gamma = g|_{\Sigma}$ and $(\Sigma,\gamma)$ have the constant Gaussian curvature $K^{\Sigma} = C$ and zero geodesic curvature $k^{\partial\Sigma} = 0$;
\item[(ii)] $h^{M} = a dt^{2}$ on $V$, where $a \in C^{\infty}$ depends only on $t \in [0,\epsilon)$;
\item[(iii)] $\mu = C$ and $J = 0$ on $V$;
\item[(iv)] The tilted dominant boundary condition is saturated along $V \cap \partial M$.
\end{itemize}
Otherwise, if without loss of generality of {\rm (III)}, $H^{\partial M} \geq 0$, and $\theta \neq \frac{\pi}{2}$, then $\Sigma$ is flat with $k^{\partial\Sigma} = 0$ and $M$ is flat with $H^{\partial M} = 0$ in an outer neighborhood of $\Sigma$.
\end{theorem}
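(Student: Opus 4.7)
The strategy is to extend the foliation of Proposition~\ref{foliation} across $\Sigma$ into $M_+$, force each leaf to be a MOTS using the weakly outermost hypothesis, apply the equality case of Theorem~\ref{main thm 1-1} to every leaf, and then extract the product structure. I would first apply Proposition~\ref{foliation} on the $M_+$ side to obtain $V\cong[0,\epsilon)\times\Sigma$ with leaves $\Sigma_t$ of contact angle $\theta$, constant null mean curvature $\Theta^+(t)$, and positive lapse $\varphi_{\mathrm{pos}}$. Since $\Sigma$ is weakly outermost in $M_+$, no leaf homologous to $\Sigma$ can be outer trapped, so $\Theta^+(t)\geq 0$ throughout $V$.

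From the variation formula (\ref{first var null mean}) along the foliation, $L_t\varphi_{\mathrm{pos}}=\Theta^+(t)'$ is constant in the spatial variable. Symmetrizing with $Y_t:=W-\nabla^{\Sigma_t}\ln\varphi_{\mathrm{pos}}$ as in the proof of Lemma~\ref{lem 3.1} gives
\[\frac{\Theta^+(t)'}{\varphi_{\mathrm{pos}}}=Q_t+\mathrm{div}_{\Sigma_t}Y_t-|Y_t|^2.\]
Integrating over $\Sigma_t$, substituting the Robin condition (c) of Proposition~\ref{foliation} in the boundary term, invoking Gauss--Bonnet, the identity (\ref{simple q}), $\mu-|J|\geq C$, and the tilted dominant boundary energy condition exactly as in the proof of Theorem~\ref{main thm 1-1}, I expect the estimate
\[\Theta^+(t)'\int_{\Sigma_t}\frac{dv_t}{\varphi_{\mathrm{pos}}}\leq 2\pi\chi(\Sigma)-CA(\Sigma_t).\]
Combining this with $A(\Sigma_t)C\leq 2\pi\chi(\Sigma_t)=2\pi\chi(\Sigma)$ from Theorem~\ref{main thm 1-1} at each $t$, and with $\Theta^+(0)=0$, $\Theta^+(t)\geq 0$, a continuity/ODE argument on $A(\Sigma_t)$ forces $\Theta^+(t)\equiv 0$ and $A(\Sigma_t)C=2\pi\chi(\Sigma)$ throughout $V$. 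Hence every leaf is a stable capillary MOTS saturating (\ref{area est 1-1}), and Theorem~\ref{main thm 1-1} supplies $\mu+J(N_t)=C$, $\chi^+_t=0$, $K^{\Sigma_t}=C$, $k^{\partial\Sigma_t}=0$, and saturation of the tilted boundary condition on every $\Sigma_t$.

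Differentiating $A(\Sigma_t)C=2\pi\chi(\Sigma)$ in $t$ yields $C\int_{\Sigma_t}H_t\varphi_{\mathrm{pos}}\,dv_t=0$, and $\chi^+_t=0$ supplies $H_t=-\mathrm{tr}_{\Sigma_t}h^M$, so the sign assumption on $h^M$ in each of (I)--(III) forces $H_t\equiv 0$ and $\mathrm{tr}_{\Sigma_t}h^M\equiv 0$ on every leaf. Plugging $H_t=0$ into the saturated identity
\[H^{\partial M}+(\cos\theta)\mathrm{tr}_{\partial M}h^M=\sin\theta\,|h^M(\bar N,\cdot)^T|,\]
and decomposing $\mathrm{tr}_{\partial M}h^M$ and $h^M(\bar N,\bar\nu)$ in the frame $\bar N=\cos\theta\,N+\sin\theta\,\nu$, $\bar\nu=-\sin\theta\,N+\cos\theta\,\nu$, should produce a sign-definite expression in $\cos\theta$ only compatible with the prescribed angle range when $\cos\theta=0$, giving $\theta=\pi/2$ in cases (I) and (II); case (III) yields the stated dichotomy, either the same algebra delivers $\theta=\pi/2$, or a second-variation comparison of the energy functional combined with $H^{\partial M}\geq 0$ and the saturated data produces the flat-collar alternative as in Longa \cite{LON}. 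When $\theta=\pi/2$ the Robin condition in Lemma~\ref{lem 4.1} collapses to Neumann, so $\varphi_{\mathrm{pos}}=\varphi_{\mathrm{pos}}(t)$ is a function of $t$ alone; after the reparametrization $t\mapsto\int_0^t\varphi_{\mathrm{pos}}(s)\,ds$ the metric becomes $dt^2+\gamma_t$, and constancy of $K^{\Sigma_t}=C$ and $k^{\partial\Sigma_t}=0$ together with $H_t=0$ forces $\gamma_t\equiv\gamma$, establishing (i). Items (ii)--(iv) then follow from $\chi^\pm_t=0$ (producing $h^M=a\,dt^2$), from $J(N_t)=-|J|$ combined with $\mu+J(N_t)=C$ (producing $J=0$ and $\mu=C$), and from the propagated saturation of the tilted boundary condition along $V\cap\partial M$.

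The principal obstacle I anticipate is the case analysis for $\theta$: the algebraic bookkeeping that extracts a sharp sign condition on $\cos\theta$ from the saturated tilted boundary identity leaves no slack and must be performed carefully frame-by-frame, in the spirit of claim (\ref{tilted}) in the proof of Theorem~\ref{main thm 1-1}, but now paired with the pointwise convexity hypothesis and the angle range. The $\theta\neq\pi/2$ branch of case (III) is likewise the most subtle, since it requires a genuine energy second-variation comparison rather than a pointwise algebraic forcing to produce the flat collar.
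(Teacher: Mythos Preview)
Your overall architecture (foliate, force $\Theta^+(t)\equiv 0$, read off rigidity leaf by leaf) is right, but two concrete steps fail as written.

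\textbf{The differential inequality is missing terms.} Formula (\ref{first var null mean}) is the linearization at a MOTS; along the foliation one has instead (see (\ref{eq 5.1}))
\[
\frac{d\Theta^+(t)}{dt}=L_t\varphi+\Bigl(-\tfrac12(\Theta^+(t))^2+\Theta^+(t)\,\mathrm{tr}_g h^{M}\Bigr)\varphi.
\]
After integrating, the right-hand side of your displayed inequality must carry the extra terms $\Theta^+(t)\int_{\Sigma_t}\mathrm{tr}_g h^{M}\,dv$ and, once the boundary term is processed via (\ref{simple q}), also $(\cot\theta)\,\Theta^+(t)\,A(\partial\Sigma_t)$. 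Without these you cannot run any ODE/Gr\"onwall argument, because your right-hand side $2\pi\chi(\Sigma)-CA(\Sigma_t)$ is not controlled by $\Theta^+(t)$. Relatedly, invoking Theorem~\ref{main thm 1-1} at each $\Sigma_t$ to get $A(\Sigma_t)C\le 2\pi\chi(\Sigma)$ is circular: that theorem needs $\Sigma_t$ to be a MOTS, which is exactly what you are trying to establish. The paper instead substitutes the equality $2\pi\chi(\Sigma)=CA(\Sigma)$ and expands $A(\Sigma)-A(\Sigma_t)$ via the first variation of area; this produces a genuine linear differential inequality in $\Theta^+(t)$ plus a forcing term $C\cot\theta\int_0^t\beta_3(r)\,dr$.

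\textbf{The mechanism for forcing $\theta=\pi/2$ is different and yours does not close.} You propose to read $\theta=\pi/2$ off the saturated tilted boundary identity at each leaf by decomposing in the $\{\bar N,\bar\nu\}$ frame. That identity alone does not pin down $\theta$; it is one scalar relation among $H^{\partial M}$, $\mathrm{tr}_{\partial M}h^{M}$, and $h^{M}(\bar N,\cdot)^{T}$, and the convexity hypothesis constrains $\mathrm{tr}_\pi h^{M}$ over $n$-planes, not these individual pieces. The paper's argument is instead a \emph{second-derivative} obstruction at $t=0$: one first shows $(\Theta^+)'(0)=0$, then computes that the forcing term gives $(\Theta^+)''(0)\le C\cot\theta\cdot\beta_3(0)/\beta_1(0)$, which is strictly negative under the sign pairings in (I) and (II) unless $\cot\theta=0$. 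This contradicts weakly outermost and forces $\theta=\pi/2$ \emph{before} one knows $\Theta^+(t)\equiv 0$. (Equivalently, after you have the correct inequality, one can drop the nonpositive forcing term, run Gr\"onwall to get $\Theta^+(t)\equiv 0$, and then recover $C\cot\theta=0$ from the equality case of that drop; but either way the conclusion comes from the $C\cot\theta\int_0^t\beta_3$ term in the ODE, not from the boundary identity.)

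Finally, for the $\theta\ne\pi/2$ branch of (III) the paper does not use a second variation of the energy. Once $\Theta^+(t)\equiv 0$ and the equalities give $Q_t=0$, one shows $Y_t=0$, then repeats the variation computation for $\Theta^-(t)$ with $\varphi^-=-\varphi$ and $W^-=-W$; integrating and using $H^{\partial M}\ge 0$ forces $\nabla^{\Sigma_t}\varphi=0$, $|J|=0$, $\chi^-_t=0$, and $H^{\partial M}=0$, from which flatness follows via the Gauss equation.
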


\begin{proof}
Our assumption is that $\Sigma$ is weakly outermost and we know that $\Theta^{+}(t)$ is the constant along $\Sigma_{t}$. So we have $\Theta^{+}(t) \geq 0$ for all $t \in [0,\epsilon)$. From (\ref{first var null mean}), we get
\begin{align} \label{eq 5.1}
\frac{d\Theta^{+}(t)}{dt} =& -\Delta_{\Sigma_{t}}\varphi + 2\langle W_{t}, \nabla^{\Sigma_{t}}\varphi \rangle + \left(Q - \vert W_{t} \vert^{2} + {\rm div}_{\Sigma_{t}}W_{t}\right)\varphi \\
&+ \left(-\frac{1}{2}\left(\Theta^{+}(t)\right)^{2} + \Theta^{+}(t){\rm tr}_{g}h^{M}\right)\varphi \nonumber
\end{align}
on $\Sigma_{t}$, for each $t \in [0,\epsilon)$. Let $Y = W_{t} - \varphi^{-1}\nabla^{\Sigma_{t}}\varphi$. If we multiply the equation (\ref{eq 5.1}) by $\varphi^{-1}$, then we have 
\begin{align} \label{eq 5.2}
\frac{1}{\varphi}\cdot\frac{d\Theta^{+}(t)}{dt} &= {\rm div}_{\Sigma_{t}}Y - \vert Y \vert^{2} + Q + \Theta^{+}(t){\rm tr}_{g}h^{M} - \frac{1}{2}\left(\Theta^{+}(t)\right)^{2} \\
&\leq {\rm div}_{\Sigma_{t}}Y + Q + \Theta^{+}(t){\rm tr}_{g}h^{M} \nonumber
\end{align}
for each $t \in [0,\epsilon)$. Integrating the inequality (\ref{eq 5.2}) over $\Sigma_{t}$, we obtain
\begin{align*}
\left(\Theta^{+}(t)\right)^{'}\int_{\Sigma_{t}}\frac{1}{\varphi} \, dv - \Theta^{+}(t)\int_{\Sigma_{t}}{\rm tr}_{g}h^{M}dv &\leq \int_{\Sigma_{t}}Q \, dv + \int_{\partial\Sigma_{t}}\langle Y, \nu_{t} \rangle \, ds \\
&= \int_{\Sigma_{t}}Q \, dv + \int_{\partial\Sigma_{t}}\left(\langle W_{t},\nu_{t} \rangle - \frac{1}{\varphi}\cdot\frac{\partial\varphi}{\partial\nu_{t}}\right) ds,
\end{align*}
since $\left(\Theta^{+}(t)\right)^{'}$ is also a constant on $\Sigma_{t}$. By the eigenvalue problem (\ref{stable ope}) and the equation (\ref{simple q}), we get
\begin{align*}
\left(\Theta^{+}(t)\right)^{'}\int_{\Sigma_{t}}\frac{1}{\varphi} \, dv  &\leq \Theta^{+}(t)\int_{\Sigma_{t}}{\rm tr}_{g}h^{M}dv + \int_{\Sigma_{t}}Q \, dv - \int_{\partial\Sigma_{t}}\left(q - \langle W_{t},\nu_{t} \rangle\right) ds \\
&\quad + \int_{\partial\Sigma_{t}}H(t)\cot\theta \, ds + \int_{\partial\Sigma_{t}}k^{\partial\Sigma_{t}}ds. \\
&= \Theta^{+}(t)\int_{\Sigma_{t}}{\rm tr}_{g}h^{M}dv + \int_{\Sigma_{t}}Q \, dv + \cot\theta\int_{\partial\Sigma_{t}}\Theta^{+}(t) \, ds  \\
&\quad + \int_{\partial\Sigma_{t}}k^{\partial\Sigma_{t}}ds - \int_{\partial\Sigma_{t}}\frac{1}{\sin\theta}\left(H^{\partial M} + (\cos\theta){\rm tr}_{\Sigma_{t}}h^{M} - \sin\theta\langle W_{t},\nu_{t} \rangle\right) ds.
\end{align*}
By the equation (\ref{tilted}) and the definition of $Q$, we obtain
\begin{align*}
(\Theta^{+}(t))'\int_{\Sigma_{t}}\frac{1}{\varphi}dv &\leq \Theta^{+}(t)\int_{\Sigma_{t}}{\rm tr}_{g}h^{M}dv + \int_{\Sigma_{t}}K^{\Sigma_{t}}dv - \int_{\Sigma_{t}}\left(\mu + J(N_{t})\right)dv \\
&\quad - \frac{1}{2}\int_{\Sigma_{t}}\vert \chi^{+}_{t} \vert^{2}dv + \int_{\partial\Sigma_{t}}k^{\partial\Sigma_{t}}ds + (\cot\theta)\Theta^{+}(t)A(\partial\Sigma_{t}) \\
&\quad -\frac{1}{\sin\theta}\int_{\partial\Sigma_{t}}\left(H^{\partial M} + (\cos\theta){\rm tr}_{\partial M}h^{M} + (\sin\theta) h^{M}(\bar{N},\bar{\nu}_{t})\right)ds \\
&\leq \Theta^{+}(t)\int_{\Sigma_{t}}{\rm tr}_{g}h^{M}dv + \int_{\Sigma_{t}}K^{\Sigma_{t}}dv - \int_{\Sigma_{t}}\left(\mu + J(N_{t})\right)dv \\
&\quad + \int_{\partial\Sigma_{t}}k^{\partial\Sigma_{t}}ds + (\cot\theta)\Theta^{+}(t)A(\partial\Sigma_{t}),
\end{align*}
where we used the tilted dominant boundary energy condition in the last inequality. By the Cauchy-Schwarz inequality, we get
\begin{align} \label{CSI}
\mu + J(N_{t}) \geq \mu  - \vert J \vert \geq C
\end{align}
for $C \in \mathbb{R}$ on $M_{+}$. Using the inequality (\ref{CSI}) and the Gauss-Bonnet theorem, we obtain
\begin{align*}
(\Theta^{+}(t))'\int_{\Sigma_{t}}\frac{1}{\varphi}dv &\leq \Theta^{+}(t)\int_{\Sigma_{t}}{\rm tr}_{g}h^{M}dv + 2\pi\chi(\Sigma) - A(\Sigma_{t})C + \Theta^{+}(t)\cot\theta A(\partial\Sigma_{t}) \\
&= \Theta^{+}(t)\int_{\Sigma_{t}}{\rm tr}_{g}h^{M}dv + C\left(A(\Sigma) - A(\Sigma_{t})\right) + \Theta^{+}(t)\cot\theta A(\partial\Sigma_{t}),
\end{align*}
where we used the equality (\ref{area est 1-1}) of Theorem \ref{main thm 1} in the last equation. By the first variation of area, we have
\begin{align} \label{first var}
A(\Sigma) - A(\Sigma_{t}) &= -\int^{t}_{0}\frac{d}{dr}A(\Sigma_{r})dr \\
&=-\left(\int^{t}_{0}\left(\int_{\Sigma_{r}}H(r)\varphi \, dv\right)dr - \cot\theta\int^{t}_{0}\left(\int_{\partial\Sigma_{r}}\varphi \, ds\right)dr\right) \nonumber \\
&= - \int^{t}_{0}\left(\int_{\Sigma_{r}}\Theta^{+}(t)\varphi \, dv\right)dr + \int^{t}_{0}\left(\int_{\Sigma_{r}}{\rm tr}_{\Sigma_{r}}h^{M}\varphi \, dv\right)dr \nonumber \\
&\quad + \cot\theta\int^{t}_{0}\left(\int_{\partial\Sigma_{r}}\varphi \, ds\right)dr. \nonumber
\end{align}
Then we get
\begin{align} \label{eq 5.4}
(\Theta^{+}(t))'\int_{\Sigma_{t}}\frac{1}{\varphi}dv &\leq \Theta^{+}(t)\int_{\Sigma_{t}}{\rm tr}_{g}h^{M}dv - C\Theta^{+}(t)\int^{t}_{0}\left(\int_{\Sigma_{r}}\varphi \, dv\right)dr \\
&\quad + C\int^{t}_{0}\left(\int_{\Sigma_{r}}{\rm tr}_{\Sigma_{r}}h^{M}dv\right)dr \nonumber \\
&\quad + \cot\theta\left(\Theta^{+}(t)A(\partial\Sigma_{t}) + C\int^{t}_{0}\left(\int_{\partial\Sigma_{r}}\varphi \, ds\right)dr\right). \nonumber
\end{align}
Let 
\begin{itemize}
\item[$\bullet$] $\beta_{1}(t) = \int_{\Sigma_{t}}\varphi^{-1} \, dv$,  
\item[$\bullet$] $\beta_{2}(t) = \int_{\Sigma_{r}}\varphi \, dv$,
\item[$\bullet$] $\beta_{3}(t) = \int_{\partial\Sigma_{r}}\varphi \, ds$.
\end{itemize}

\begin{itemize}
\item[CASE (I).] $C > 0$ and ${\rm tr}_{\pi}h^{M} \leq 0$ with $\theta \in [\frac{\pi}{2},\pi)$.
\end{itemize}
If $C > 0$ and ${\rm tr}_{\pi}h^{M} \geq 0$, then we obtain
\begin{align} \label{ineq 5.5.1}
C\int^{t}_{0}\left(\int_{\Sigma_{r}}{\rm tr}_{\Sigma_{r}}h^{M} dv\right)dr \leq 0.
\end{align}
So we have
\begin{align} \label{ineq 5.5}
(\Theta^{+}(t))'\beta_{1}(t) &\leq \Theta^{+}(t)\int_{\Sigma_{t}}{\rm tr}_{g}h^{M}dv - C\Theta^{+}(t)\int^{t}_{0}\beta_{2}(r)dr \\
&\quad  + \cot\theta\left(\Theta^{+}(t)A(\partial\Sigma_{t}) + C\int^{t}_{0}\beta_{3}(r)dr\right). \nonumber
\end{align}
If $t= 0$, then we have
\begin{align*}
(\Theta^{+}(0))'\beta_{1}(0) \leq 0.
\end{align*}
If $(\Theta^{+}(0)) < 0'$, then $\Theta^{+}(t) < 0$ for all small positive $t$. This is a contradiction, since $\Sigma$ is weakly outermost. So $(\Theta^{+}(0))' = 0$. Let $\beta(t)$ denote the right hand side of the inequality (\ref{ineq 5.5}). A simple calculation yields $\beta'(0) = C\cot\theta\frac{\beta_{3}(0)}{\beta_{1}(0)}$. This implies that
\begin{align*}
(\Theta^{+}(0))'' < \beta'(0) < 0
\end{align*}
if and only if $\theta \neq \frac{\pi}{2}$. This is a contradiction, since $\Sigma$ is weakly outermost. Hence, $\theta$ must be $\frac{\pi}{2}$. It follows that
\begin{align*}
(e^{-\int^{t}_{0}\beta_{4}(r)dr}\Theta^{+}(t))' \leq 0,
\end{align*}
where $\beta_{4}(r) = \frac{\left(\int_{\Sigma_{r}}{\rm tr}_{g}h^{M}dv - C\int^{t}_{0}\beta_{2}(r)dr\right)}{\beta_{1}(r)}$. This gives that $e^{-\int^{t}_{0}\beta_{4}(r)dr}\Theta^{+}(t) \leq \Theta^{+}(0)$ for $t \in [0,\epsilon)$. Hence, we conclude that $\Theta^{+}(t) = 0$ for every $t \in [0,\epsilon)$, since $\Sigma$ is weakly outermost. Therefore, all of the above inequalities become equalities. Hence, by the inequality (\ref{ineq 5.5.1}), we get ${\rm tr}_{\Sigma_{t}}h^{M} = 0$ and $H(t) = 0$ on $\Sigma_{t}$. Furthermore, $\Theta^{-}(t) = 0$ on $\Sigma_{t}$ for $t \in [0,\epsilon)$.

\begin{itemize}
\item[CASE (II).] $C < 0$ and ${\rm tr}_{\pi}h^{M} \geq 0$ with $\theta \in (0,\frac{\pi}{2}]$.
\end{itemize}
By the inequality (\ref{eq 5.4}), we obtain
\begin{align} \label{ineq 5.7.1}
C\int^{t}_{0}\left(\int_{\Sigma_{r}}{\rm tr}_{\Sigma_{r}}h^{M}dv\right)dr \leq 0.
\end{align}
Then we get the inequality (\ref{ineq 5.5}), which gives that $(\Theta^{+}(0))' = 0$. Hence, $(\Theta^{+}(0))'' \leq \beta'(0) < 0$, it follows that $\theta$ must be $\frac{\pi}{2}$, since $\Sigma$ is weakly outermost. With the same argument as CASE (1), we deduce that $\Theta^{+}(t) = 0$ for $t \in [0,\epsilon)$. By the inequality (\ref{ineq 5.7.1}), we have ${\rm tr}_{\Sigma_{t}}h^{M} = 0$ and $H(t) = 0$ on $\Sigma_{t}$. Hence, $\Theta^{-}(t) = 0$ for $t \in [0,\epsilon)$.

\begin{itemize}
\item[CASE (III).] $C = 0$, ${\rm tr}_{\pi}h^{M} \geq 0$, and $\Sigma$ is an energy-minimizing.
\end{itemize}
By the inequality (\ref{eq 5.4}), we have
\begin{align*}
(\Theta^{+}(t))'\beta_{1}(t) \leq \Theta^{+}(t)\int_{\Sigma_{r}}{\rm tr}_{g}h^{M}dv + \Theta^{+}(t)(\cot\theta)A(\partial\Sigma_{t}).
\end{align*}
If $t = 0$, then $(\Theta^{+}(0))' \leq 0$. This gives that $(\Theta^{+}(0))' = 0$, since $\Sigma$ is weakly outermost. Let $\beta_{5}(r) = \frac{\left(\int_{\Sigma_{r}}{\rm tr}_{g}h^{M}dv + (\cot\theta)A(\partial\Sigma_{r})\right)}{\beta_{1}(r)}$. Then we have
\begin{align*}
\left(e^{-\int^{t}_{0}\beta_{5}(r)dr}\Theta^{+}(t)\right)' \leq 0.
\end{align*}
It follows that $\Theta^{+}(t) = 0$ for $t \in [0,\epsilon)$. By the energy-minimizing, we obtain
\begin{align} \label{energy-mini}
0 \leq E(t) - E(0) = \int^{t}_{0}E'(r)dr = \int^{t}_{0}\left(\int_{\Sigma_{r}}H(r)\varphi \, dv\right)dr.
\end{align}
Then we have $H(t) \geq 0$ for $t \in [0,\epsilon)$. This implies that $0 \leq H(t) \leq H(t) + {\rm tr}_{\Sigma_{t}}h^{M} = \Theta^{+}(t) = 0$, since $\Sigma$ is 2-convex. So we conclude that $H(t) = 0$ and ${\rm tr}_{\Sigma_{t}}h^{M} = 0$ on $\Sigma_{t}$, which gives that $\Theta^{-}(t) = 0$.

The remaining part of the proof for the case $\theta = \frac{\pi}{2}$ can be obtained by following the argument in Theorem B in \cite{AM} or Section 5 in \cite{M2}.

Finally, we prove that $M$ is flat if $C = 0$, $H^{\partial M} \geq 0$, and $\theta \neq \frac{\pi}{2}$. From the inequality (\ref{eq 5.2}) and $Q = 0$, we have ${\rm div}_{\Sigma_{t}}Y - \vert Y \vert^{2} = 0$. So we get
 \begin{align*}
\int_{\Sigma_{t}}\vert Y \vert^{2}dv &= \int_{\Sigma_{t}}{\rm div}_{\Sigma_{t}}Y \, dv \\
&= - \int_{\partial\Sigma_{t}}\left(q - \langle W_{t},\nu_{t} \rangle\right)ds = 0,
 \end{align*}
where we used item (3) of Theorem \ref{main thm 1-1}. So we obtain $W_{t} = \varphi^{-1}\nabla^{\Sigma_{t}}\varphi$. Now we replace $\Theta^{+}(t)$ and $\varphi$ by $\Theta^{-}(t)$ and $\varphi^{-} = -\varphi$, respectively. Then we have
\begin{align*}
0 = \frac{d\Theta^{-}(t)}{dt} = -\Delta_{\Sigma_{t}}\varphi^{-} + 2\langle W^{-}_{t}, \nabla^{\Sigma_{t}}\varphi^{-} \rangle + \left(Q^{-} - \vert W^{-}_{t} \vert^{2}  + {\rm div}_{\Sigma_{t}}W^{-}_{t} \right)\varphi^{-}.
\end{align*}
Note that
\begin{align*}
Q^{-} &= K^{\Sigma_{t}} - \left(\mu + J(-N_{t}) - \frac{1}{2}\vert \chi^{-}_{t} \vert^{2}\right) \\
&= -\left(\mu + \vert J \vert - \frac{1}{2}\vert \chi^{-}_{t} \vert^{2}\right)     \\
&= -2\vert J \vert - \frac{1}{2}\vert \chi^{-}_{t} \vert^{2}
\end{align*}
and
\begin{align*}
W^{-}_{t} = -W_{t} = -\varphi^{-1}\nabla^{\Sigma_{t}}\varphi.
\end{align*}
Then we obtain
\begin{align} \label{eq 5.8}
0 = \Delta_{\Sigma_{t}}\varphi + \varphi^{-1}\vert \nabla^{\Sigma_{t}}\varphi \vert^{2} + \varphi\left(\vert J \vert + \frac{1}{4}\vert \chi^{-}_{t} \vert^{2}\right).
\end{align}
By the eigenvalue problem (\ref{stable ope}) and the equation (\ref{simple q}), we get
\begin{align*}
\frac{\partial\varphi}{\partial\nu_{t}} = q\varphi = \frac{\varphi}{\sin\theta}H^{\partial M} \geq 0.
\end{align*}
Integrating the equation (\ref{eq 5.8}) over $\Sigma_{t}$, we obtain
\begin{align*}
0 \leq \int_{\partial\Sigma_{t}}\frac{\partial\varphi}{\partial\nu_{t}}ds = \int_{\Sigma_{t}} \Delta_{\Sigma_{t}}\varphi \, dv = -\int_{\Sigma_{t}}\left(\frac{\vert \nabla^{\Sigma_{t}}\varphi\vert^{2}}{\varphi}  + \left(\vert J \vert + \frac{1}{4}\vert \chi^{-}_{t} \vert^{2} \right)\varphi \right)dv \leq 0.
\end{align*}
This shows that $\vert \nabla^{\Sigma_{t}}\varphi \vert = \vert J \vert = \vert \chi^{-}_{t} \vert = 0$. Thus, $\varphi$ is a constant on $\Sigma_{t}$ and $H^{\partial M} = 0$ along $\partial\Sigma_{t}$. Since $W_{t} = \varphi^{-1}\nabla^{\Sigma_{t}}\varphi$, we have $h^{M}(N_{t},\cdot)\vert_{\Sigma_{t}} = 0$. Furthermore, we get $h^{M}\vert_{\Sigma_{t}} = 0$, since $\chi^{+}_{t} = \chi^{-}_{t} = 0$. Hence, we conclude that $h^{\Sigma_{t}} = 0$ for $t \in [0,\epsilon)$. Since $\Sigma_{t}$ is flat and totally geodesic, the Gauss equation shows that the sectional curvature of $M$ is zero. Hence, $M$ is flat with $H^{\partial M} = 0$ around of $\Sigma$.

\end{proof}

\section{High dimensional initial data set}
In this section, we extend the results of Section 4 to high dimensional initial data sets. 

To generalize our results to high dimensions, we first introduce a geometric or topological invariant that serves as a replacement for the Euler characteristic, which is not applicable in high dimensions. Let $(\Sigma,\gamma)$ be a compact $n$-dimensional Riemannian manifold ($n \geq 3$) with boundary $\partial\Sigma$. For $(\alpha_{1},\alpha_{2}) \in \mathbb{R} \times \mathbb{R} - \{(0,0)\}$ and for any positive function $\rho \in C^{\infty}(\Sigma)$, we define the \textit{Yamabe functional} as
\begin{align} \label{Yamabe func}
\mathcal{Q}^{\alpha_{1},\alpha_{2}}(\rho) = \frac{\int_{\Sigma}\left(\frac{4(n-1)}{n-2}\vert \nabla^{\Sigma}\rho \vert^{2} + R^{\Sigma}\rho^{2}\right)dv + 2\int_{\partial\Sigma}H^{\partial\Sigma}\rho^{2}ds}{\left(\alpha_{1}\left(\int_{\Sigma}\rho^{\frac{2n}{n-2}}dv\right) + \alpha_{2}\left(\int_{\partial\Sigma}\rho^{\frac{2(n-1)}{n-2}}ds\right)^{\frac{n}{n-1}}\right)^{\frac{n-2}{n}}},
\end{align}
where $R^{\Sigma}$ is the scalar curvature of $\Sigma$ and $H^{\partial\Sigma}$ is the mean curvature of $\partial\Sigma$. The corresponding \textit{Yamabe invariant} is defined by
\begin{align} \label{Yamabe inva}
\mathcal{Y}^{\alpha_{1},\alpha_{2}}(\Sigma,\partial\Sigma) := \inf_{\rho \in C^{\infty}(\Sigma), \,\rho > 0} \mathcal{Q}^{\alpha_{1},\alpha_{2}}(\rho)
\end{align}
for $(\alpha_{1},\alpha_{2}) \in \{(1,0), (0,1)\}$. This quantity is invariant under conformal transformations (see \cite{ES1, ES2, ES3, ES4}). It is well-known that $\mathcal{Y}^{1,0}(\Sigma,\partial\Sigma) \leq \mathcal{Y}(\mathbb{S}^{n}_{+},\partial\mathbb{S}^{n}_{+}) = n(n-1)\left(\frac{V(\mathbb{S}^{n})}{2}\right)^{\frac{2}{n}}$ and $\mathcal{Y}^{0,1}(\Sigma,\partial\Sigma) \leq \mathcal{Y}(\mathbb{B}^{n},\partial\mathbb{B}^{n}) = 2(n-1)V(\mathbb{S}^{n-1})^{\frac{1}{n-1}}$, where $\mathbb{S}^{n}_{+}$, $\mathbb{S}^{n}$, and $\mathbb{B}^{n}$ are the upper standard hemisphere, standard sphere, and unit ball in $\mathbb{R}^{n}$, respectively (cf. \cite{CHE, ES2, ES3, WA}). In \cite{ES2, ES3}, Escobar showed the following: If $\mathcal{Y}^{1,0}(\Sigma,\partial\Sigma) < \mathcal{Y}(\mathbb{S}^{n}_{+},\partial\mathbb{S}^{n}_{+})$, then there exists a conformal metric on $\Sigma$ with constant scalar curvature and zero mean curvature along the boundary. Similarly, if $\mathcal{Y}^{0,1}(\Sigma,\partial\Sigma) < \mathcal{Y}(\mathbb{B}^{n},\partial \mathbb{B}^{n})$, then there exists a conformal metric with zero scalar curvature and constant mean curvature on the boundary. We now define the \textit{Yamabe constant}, which serves as a generalization of the Euler characteristic in high dimensions.
\begin{definition}
Let $[\gamma]$ and $\mathcal{C}(\Sigma)$ denote the conformal class of the metric $\gamma$ and the space of all conformal classes on $\Sigma$, respectively. The \textit{Yamabe constant} of a compact Riemannian manifold $\Sigma$ with boundary $\partial\Sigma$ is defined by
\begin{align} \label{Yamabe cons}
\sigma^{\alpha_{1},\alpha_{2}}(\Sigma,\partial\Sigma) = \sup_{[\gamma] \in \mathcal{C}(\Sigma)} \mathcal{Y}^{\alpha_{1},\alpha_{2}}(\Sigma,\partial\Sigma).
\end{align}
\end{definition}
In the case of Riemannian surface with boundary, the scalar curvature is twice the Gaussian curvature, and the mean curvature of the boundary coincides with the geodesic curvature. Consequently, it follows that $\sigma^{1,0}(\Sigma,\partial\Sigma) = 4\pi\chi(\Sigma)$ (cf. \cite{BC, CG, MO, MO2}). Similarly, $\sigma^{0,1}(\Sigma,\partial\Sigma) = 4\pi\chi(\Sigma)$. Thus, the Yamabe constant may be regarded as a natural generalization of the Euler characteristic to high dimensions.

We now establish an area estimate of an $n$-dimensional MOTS with capillary boundary, using the Yamabe constant.

\begin{theorem} [Theorem \ref{main high 1}] \label{main high 1-1}
Let $(M,g,h^{M})$ be an $(n+1)$-dimensional initial data set with boundary and $\Sigma^{n}$ be a compact stable MOTS with capillary boundary in $(M,g,h^{M})$. Assume that $\mu + J(N) \geq D$ for $D \in \mathbb{R}^{-} \cup \{0\}$ and $(M,g,h^{M})$ satisfies the tilted dominant boundary energy condition.
\begin{itemize}
\item[(A)] If $D < 0$ and $\sigma^{1,0}(\Sigma,\partial\Sigma) < 0$, then
\begin{align} \label{high area est 1}
2DA(\Sigma)^{\frac{2}{n}} \leq \sigma^{1,0}(\Sigma,\partial\Sigma).
\end{align}
\item[(B)] If $D = 0$ and $\sigma^{1,0}(\Sigma,\partial\Sigma) \leq 0$, then $\sigma^{1,0}(\Sigma,\partial\Sigma) = 0$.
\end{itemize}
Moreover, if equality holds in (\ref{high area est 1}) or $\sigma^{1,0}(\Sigma,\partial\Sigma) = 0$, then we have
\begin{itemize}
\item[(1)] $\mu + J(N) = D$ and $\chi^{+} = 0$ on $\Sigma$,
\item[(2)] $\Sigma$ is Einstein manifold with $R^{\Sigma} = 2D$ and totally geodesic boundary,
\item[(3)] $\lambda_{1}(L_{s}) = \lambda_{1}(L) = 0$ on $\Sigma$,
\item[(4)] $H^{\partial M} + (\cos\theta){\rm tr}_{\partial M}h^{M} = \sin\theta\vert h^{M}(\bar{N},\cdot)^{T} \vert$ along $\partial\Sigma$.
\end{itemize}
\end{theorem}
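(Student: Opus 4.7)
The plan is to parallel the proof of Theorem \ref{main thm 1-1}, replacing the Gauss--Bonnet input with the Yamabe invariant. I will start from Lemma \ref{lem 3.1}, which gives $\lambda_{1}(L_{s}) \geq 0$, but in the Rayleigh characterization (\ref{sym eigen}) I will use an arbitrary positive test function $\rho \in C^{\infty}(\Sigma)$ in place of the choice $\varphi = 1$ from the 3D proof. The same algebraic manipulation of the Robin boundary term used there, combining (\ref{simple q}), the identity (\ref{tilted}), and the tilted dominant boundary energy condition, yields the pointwise estimate $-(q - \langle W,\nu \rangle) \leq H^{\partial\Sigma}$ along $\partial\Sigma$, so the Robin boundary contribution in the Rayleigh quotient is bounded above by $\int_{\partial\Sigma} H^{\partial\Sigma}\rho^{2}\,ds$.

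Substituting $Q = \tfrac{1}{2}R^{\Sigma} - (\mu + J(N)) - \tfrac{1}{2}|\chi^{+}|^{2}$, using $\mu + J(N) \geq D$ and $|\chi^{+}|^{2} \geq 0$, and multiplying by $2$, the stability inequality becomes
\[ 2D\int_{\Sigma}\rho^{2}\,dv \;\leq\; \int_{\Sigma}\!\left(2|\nabla^{\Sigma}\rho|^{2} + R^{\Sigma}\rho^{2}\right) dv + 2\int_{\partial\Sigma} H^{\partial\Sigma}\rho^{2}\, ds. \]
Since $2 \leq \tfrac{4(n-1)}{n-2}$ for $n \geq 3$, the right-hand side is dominated by $\mathcal{Q}^{1,0}(\rho)\bigl(\int_{\Sigma} \rho^{2n/(n-2)}\,dv\bigr)^{(n-2)/n}$. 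Combining with the H\"older bound $\int_{\Sigma}\rho^{2}\,dv \leq A(\Sigma)^{2/n}\bigl(\int_{\Sigma}\rho^{2n/(n-2)}\,dv\bigr)^{(n-2)/n}$ and using $D \leq 0$ to preserve the inequality direction, division by the positive $L^{2n/(n-2)}$ quantity yields $2DA(\Sigma)^{2/n} \leq \mathcal{Q}^{1,0}(\rho)$. Taking the infimum over positive $\rho$ gives $2DA(\Sigma)^{2/n} \leq \mathcal{Y}^{1,0}(\Sigma,\partial\Sigma) \leq \sigma^{1,0}(\Sigma,\partial\Sigma)$, which is (\ref{high area est 1}) in case (A). Case (B) is immediate: $D = 0$ yields $0 \leq \mathcal{Y}^{1,0} \leq \sigma^{1,0}$, and combined with $\sigma^{1,0} \leq 0$ this forces $\sigma^{1,0} = 0$.

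For the rigidity part, suppose equality holds in (\ref{high area est 1}), or $\sigma^{1,0} = 0$ in case (B). Then $\mathcal{Y}^{1,0}(\Sigma,\partial\Sigma) = \sigma^{1,0}$ in the induced conformal class, and since this invariant is strictly below the hemisphere threshold in both settings, Escobar's resolution of the Yamabe problem with boundary supplies a positive minimizer $\rho_{0}$. Setting $\rho = \rho_{0}$ in the chain above, every inequality must saturate: H\"older saturation forces $\rho_{0}$ to be constant (WLOG $\rho_{0} \equiv 1$), trivializing the gradient boost $2 \to \tfrac{4(n-1)}{n-2}$; the stability step gives $\chi^{+} = 0$ and $\mu + J(N) = D$ pointwise; the boundary step gives saturation of the tilted dominant boundary energy condition; and the Rayleigh step gives $\lambda_{1}(L_{s}) = 0$ with $\rho_{0}$ as a principal eigenfunction. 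The standard identity $\lambda_{1}(L) = \lambda_{1}(L_{s})$ for MOTS stability operators then yields $\lambda_{1}(L) = 0$. Because a constant realizes $\mathcal{Y}^{1,0}$ in its conformal class, standard Escobar--Yamabe theory forces $R^{\Sigma}$ to be constant and $H^{\partial\Sigma} = 0$ (totally geodesic boundary), and the value $R^{\Sigma} = 2D$ is pinned by the integrated equality $\int_{\Sigma} R^{\Sigma}\,dv = 2DA(\Sigma)$.

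The main obstacle I anticipate is upgrading ``$R^{\Sigma} \equiv 2D$'' to ``$\Sigma$ is Einstein'', since constant scalar curvature alone does not force the Ricci tensor to be a scalar multiple of the metric. Here I would follow the template of Theorem B in \cite{AM} and Theorem 5.1 in \cite{M2}: the saturation $\chi^{+} = 0$ gives $h^{\Sigma} = -h^{M}|_{\Sigma}$, and feeding this into the contracted Gauss equation expresses ${\rm Ric}^{\Sigma}$ in terms of the ambient Ricci tensor and the trace-free part of $h^{M}|_{\Sigma}$; combining this with the saturated stability equation $L\rho_{0} = 0$ for the constant $\rho_{0}$ together with the Einstein constraint equations should pin the trace-free Ricci component to zero. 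This step is the technical heart of the argument and where the higher-dimensional analysis genuinely departs from the 3D proof.
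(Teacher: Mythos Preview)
Your derivation of the inequality (\ref{high area est 1}) and of case (B) matches the paper's argument. Two small points on the rigidity side: constancy of the minimizer should be argued from saturation of the gradient coefficient $2 < \tfrac{4(n-1)}{n-2}$, which forces $\nabla^\Sigma\rho_0 = 0$ and covers both cases (in case (B) there is no H\"older step to saturate); and $\lambda_1(L) = \lambda_1(L_s)$ is not a general identity for MOTS stability operators. It holds here because once $Q = 0$ on $\Sigma$ and $q - \langle W,\nu\rangle = 0$ along $\partial\Sigma$, the positive stable function $\varphi$ with $L\varphi \geq 0$, $B\varphi = 0$ must satisfy $W = \nabla^\Sigma\ln\varphi$ (integrate ${\rm div}_\Sigma Y - |Y|^2 \geq 0$ exactly as in Lemma \ref{lem 4.1}), whence $L\varphi = 0$.

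The genuine gap is your plan for the Einstein conclusion. The contracted Gauss equation together with the constraint equations gives no control on the trace-free part of ${\rm Ric}^M$ or on $R^M(\cdot,N,N,\cdot)$: the constraints involve only $R^M$ and a divergence of $h^M$, so you cannot pin down the trace-free part of ${\rm Ric}^\Sigma$ this way. The references \cite{AM,M2} you invoke treat the subsequent foliation rigidity (items (i)--(iv) in Theorem \ref{main high 1-2}), not this step. The paper's argument is purely intrinsic to $\Sigma$: equality forces $\mathcal{Y}^{1,0}(\Sigma,\partial\Sigma) = \sigma^{1,0}(\Sigma,\partial\Sigma)$, i.e., the conformal class of the induced metric $\gamma$ realizes the supremum in (\ref{Yamabe cons}). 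By \cite[Theorem~2.1]{AR} (see also \cite[Proposition~5.3]{CS}), a Yamabe metric in a maximizing conformal class is Einstein with \emph{totally geodesic} boundary; note that your Escobar step yields only $H^{\partial\Sigma} = 0$, which is strictly weaker. Since the minimizer $\rho_0$ is constant, $(\Sigma,\gamma)$ is itself this Yamabe metric, and items (1)--(4) follow with no ambient input whatsoever.
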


\begin{proof}
Since the equation (\ref{sym eigen}), we have
\begin{align*}
0 &\leq 2\int_{\Sigma}\left(\vert \nabla^{\Sigma}\varphi \vert^{2} + Q\varphi^{2}\right)dv -2\int_{\partial\Sigma}\left(q - \langle W,\nu \rangle\right)\varphi^{2}ds,
\end{align*}
where $\varphi \in C^{\infty}(\Sigma)$. By the the definition of $Q$ and the equation (\ref{simple q}), we obtain
\begin{align} \label{ine 5.2}
0 &\leq \int_{\Sigma}\left(2\vert \nabla^{\Sigma}\varphi \vert^{2} +\left(R^{\Sigma} -2\left(\mu + J(N)\right) - \vert \chi^{+} \vert^{2}\right)\varphi^{2}\right)dv \\
&\quad -2\int_{\partial\Sigma}\frac{1}{\sin\theta}\left(H^{\partial M} - (\cos\theta)H - (\sin\theta)H^{\partial\Sigma} - \sin\theta\langle W,\nu \rangle\right)\varphi^{2}ds \nonumber \\
&\leq \int_{\Sigma}\left(2\vert \nabla^{\Sigma}\varphi \vert^{2} + \left(R^{\Sigma} - 2D\right)\varphi^{2}\right)dv \nonumber \\
&\quad -2\int_{\partial\Sigma}\frac{1}{\sin\theta}\left(H^{\partial M} + (\cos\theta){\rm tr}_{\Sigma}h^{M} - (\sin\theta)H^{\partial\Sigma} - (\sin\theta)h^{M}(N,\nu)\right)\varphi^{2}ds, \nonumber
\end{align}
where we used $\mu + J(N) \geq D$. If we choose an orthonormal frame $\{e_{1}, \cdots,e_{n-1},e_{n} = \nu\} \in T\Sigma$, then by an argument analogous to that used in deriving equation \eqref{tilted}, we obtain
\begin{align} \label{high tilted}
(\cos\theta){\rm tr}_{\Sigma}h^{M} - (\sin\theta)h^{M}(N,\nu) = (\cos\theta){\rm tr}_{\partial M}h^{M} + (\sin\theta)h^{M}(\bar{N},\bar{\nu}).
\end{align}
Substituting (\ref{high tilted}) into (\ref{ine 5.2}) and using the tilted dominant boundary energy condition, we obtain
\begin{align} \label{ine 5.4}
0 &\leq \int_{\Sigma}\left(2\vert \nabla^{\Sigma}\varphi \vert^{2} + \left(R^{\Sigma} -2D\right)\varphi^{2}\right)dv + 2\int_{\partial\Sigma}H^{\partial\Sigma}\varphi^{2}ds \\
&\leq \int_{\Sigma}\left(\frac{4(n-1)}{n-2}\vert \nabla^{\Sigma}\varphi \vert^{2} + R^{\Sigma}\varphi^{2}\right)dv + 2\int_{\partial\Sigma}H^{\partial\Sigma}\varphi^{2}ds -2D\int_{\Sigma}\varphi^{2}dv, \nonumber
\end{align}
where we used $2 < \frac{4(n-1)}{n-2}$ for any $n \geq 3$ in the last inequality.
\begin{itemize}
\item[CASE (A).] $D < 0$ and $\sigma^{1,0}(\Sigma,\partial\Sigma) < 0$.
\end{itemize}
By the H\"older's inequality, we obtain
\begin{align} \label{ine 5.5}
-2D \int_{\Sigma}\varphi^{2}dv \leq -2D\left(\int_{\Sigma}\varphi^{\frac{2n}{n-2}}dv\right)^{\frac{n-2}{n}}A(\Sigma)^{\frac{2}{n}}.
\end{align}
Putting (\ref{ine 5.5}) into (\ref{ine 5.4}), we get
\begin{align} \label{ine 5.6}
0 &\leq \int_{\Sigma}\left(\frac{4(n-1)}{n-2}\vert \nabla^{\Sigma}\varphi \vert^{2} + R^{\Sigma}\varphi^{2}\right)dv + 2\int_{\partial\Sigma}H^{\partial\Sigma}\varphi^{2}ds \\
&\quad -2D\left(\int_{\Sigma}\varphi^{\frac{2n}{n-2}}dv\right)^{\frac{n-2}{n}}A(\Sigma)^{\frac{2}{n}}. \nonumber
\end{align}
Dividing the inequality (\ref{ine 5.6}) by $\left(\int_{\Sigma}\varphi^{\frac{2n}{n-2}}dv\right)^{\frac{n-2}{n}}$, we have
\begin{align*}
2DA(\Sigma)^{\frac{2}{n}} &\leq \frac{\int_{\Sigma}\left(\frac{4(n-1)}{n-2}\vert \nabla^{\Sigma}\varphi \vert^{2} + R^{\Sigma}\varphi^{2}\right)dv + 2\int_{\partial\Sigma}H^{\partial\Sigma}\varphi^{2}ds}{\left(\int_{\Sigma}\varphi^{\frac{2n}{n-2}}dv\right)^{\frac{n-2}{n}}} \\
&\leq \mathcal{Y}^{1,0}(\Sigma,\partial\Sigma) \leq \sigma^{1,0}(\Sigma,\partial\Sigma).
\end{align*}
\begin{itemize}
\item[CASE (B).] $D = 0$ and $\sigma^{1,0}(\Sigma,\partial\Sigma) \leq 0$.
\end{itemize}
Dividing the inequality (\ref{ine 5.4}) by $\left(\int_{\Sigma}\varphi^{\frac{2n}{n-2}}dv\right)^{\frac{n-2}{n}}$, we obtain
\begin{align*}
0 &\leq \frac{\int_{\Sigma}\left(\frac{4(n-1)}{n-2}\vert \nabla^{\Sigma}\varphi \vert^{2} + R^{\Sigma}\varphi^{2}\right)dv + 2\int_{\partial\Sigma}H^{\partial\Sigma}\varphi^{2}ds}{\left(\int_{\Sigma}\varphi^{\frac{2n}{n-2}}dv\right)^{\frac{n-2}{n}}} \\
&\leq \sigma^{1,0}(\Sigma,\partial\Sigma) \leq 0.
\end{align*}
Hence, we conclude that $\sigma^{1,0}(\Sigma,\partial\Sigma) = 0$.

If equality holds in (\ref{high area est 1}) or $\sigma^{1,0}(\Sigma,\partial\Sigma) = 0$, then all of the above inequalities become equalities. From the equality (\ref{ine 5.2}) and (\ref{ine 5.4}), it follows that $\mu + J(N) = D$, $R^{\Sigma} = 2D$, and $\chi^{+} = 0$ on $\Sigma$, and $H^{\partial M} + (\cos\theta){\rm tr}_{\partial M}h^{M} = \sin\theta\vert h^{M}(\bar{N},\cdot)^{T}\vert = -(\sin\theta)h^{M}(\bar{N},\bar{\nu})$ along $\partial\Sigma$. Since $2 < \frac{4(n-1)}{n-2}$ for any $n \geq 3$, it follows from the equality (\ref{ine 5.4}) that $\varphi$ is a constant. Moreover, $Q = \frac{R^{\Sigma}}{2} - (\mu + J(N)) - \frac{\vert \chi^{+} \vert^{2}}{2} = 0$ on $\Sigma$ and $\lambda_{1}(L_{s}) = \lambda_{1}(L) = 0$. From the Theorem 2.1 in \cite{AR} or the Proposition 5.3 in \cite{CS}, $\Sigma$ is Einstein manifold with totally geodesic boundary. So we have $q - \langle W,\nu \rangle = 0$ along $\partial\Sigma$.

\end{proof}

If we assume that $2DA(\Sigma)^{\frac{2}{n}} = \sigma^{1,0}(\Sigma,\partial\Sigma)$ or $\sigma^{1,0}(\Sigma,\partial\Sigma) = 0$, then conclusions {\rm (1), (2), (3),} and {\rm  (4)} of Theorem \ref{main high 1-1} follow. Consequently, we obtain a high dimensional analogue of Lemma \ref{lem 4.1}. Therefore, an analogue of Proposition \ref{foliation} also holds in the high dimensional setting.

Next, we prove a rigidity result for an $(n+1)$-dimensional initial dats set.

\begin{theorem} [Theorem \ref{main high 2}] \label{main high 1-2}
Let $(M,g,h^{M})$ be an $(n+1)$-dimensional initial data sets with boundary and $\Sigma^{n}$ be a compact stable MOTS with capillary boundary in $(M,g,h^{M})$ that is weakly outermost in $M_{+}$. Suppose that $\mu - \vert J \vert \geq D$ for $D \in \mathbb{R}^{-} \cup \{0\}$ and $(M,g,h^{M})$ satisfies the tilted dominant boundary energy condition, all of which hold on $M_{+}$.
\begin{itemize}
\item[(A)] If $D < 0$, $\sigma^{1,0}(\Sigma,\partial\Sigma) < 0$, then
\end{itemize}
\begin{align*}
2DA(\Sigma)^{\frac{2}{n}} \leq \sigma^{1,0}(\Sigma,\partial\Sigma).
\end{align*}
Moreover, if equality holds and $n$-convex with $\theta \in (0,\frac{\pi}{2}]$ , then $\theta$ must be $\frac{\pi}{2}$.
\begin{itemize}
\item[(B)] If $D = 0$, $\sigma^{1,0}(\Sigma,\partial\Sigma) \leq 0$, $n$-convex, and an energy-minimizing, then $\theta = \frac{\pi}{2}$ or not.
\end{itemize}
Furthermore, the case of $\theta = \frac{\pi}{2}$, the following hold:
\begin{itemize}
 \item[(i)] there exists an outer neighborhood $V$ of $\Sigma$ in $M$ such that $(V,g\vert_{V})$ is isometric to $([0,\epsilon) \times \Sigma, dt^{2} + \gamma)$, where $\gamma = g|_{\Sigma}$ and $(\Sigma,\gamma)$ is the Einstein manifold with the scalar curvature $R^{\Sigma} = 2D$ and a totally geodesic boundary;
\item[(ii)] $h^{M} = a dt^{2}$ on $V$, where $a \in C^{\infty}$ depends only on $t \in [0,\epsilon)$;
\item[(iii)] $\mu = D$ and $J = 0$ on $V$;
\item[(iv)] The tilted dominant boundary condition is saturated along $V \cap \partial M$.
\end{itemize}
Otherwise, if without loss of generality of {\rm (B)}, $Ric^{M} = \frac{\mu}{n+1}g$, $H^{\partial M} \geq 0$, and $\theta \neq \frac{\pi}{2}$, then $\Sigma$ is Ricci flat with totally geodesic boundary and $M$ is Ricci flat with $H^{\partial M} = 0$ in an outer neighborhood of $\Sigma$.
\end{theorem}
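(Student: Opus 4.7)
My plan is to mirror the proof of Theorem~\ref{main thm 1-2} in the high-dimensional setting, substituting Theorem~\ref{main high 1-1} and the Yamabe constant for Theorem~\ref{main thm 1-1} and the Gauss--Bonnet formula. The inequality $2DA(\Sigma)^{\frac{2}{n}} \leq \sigma^{1,0}(\Sigma,\partial\Sigma)$ is immediate from Theorem~\ref{main high 1-1}, since the Cauchy--Schwarz observation $\mu + J(N) \geq \mu - \vert J\vert$ noted after Theorem~\ref{main thm 2} turns $\mu - \vert J\vert \geq D$ into the hypothesis of Theorem~\ref{main high 1-1}. When equality holds, Theorem~\ref{main high 1-1} supplies the saturation data at $t = 0$: $\mu + J(N) = D$, $\chi^+ = 0$, $R^\Sigma = 2D$ with totally geodesic $\partial\Sigma$, $\lambda_1(L_s) = \lambda_1(L) = 0$, and the saturated tilted boundary condition. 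As the author notes, the high-dimensional analogues of Lemma~\ref{lem 4.1} and Proposition~\ref{foliation} then hold, so there is an outward foliation $\{\Sigma_t\}_{t \in [0,\epsilon)}$ of an outer neighborhood of $\Sigma$ by capillary leaves of constant null mean curvature $\Theta^+(t)$ at contact angle $\theta$, with a positive lapse $\varphi$ satisfying the Robin condition $B\varphi = 0$ along $\partial\Sigma$.

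Next I would differentiate $\Theta^+(t)$ using (\ref{first var null mean}), complete the square with $Y = W_t - \varphi^{-1}\nabla^{\Sigma_t}\varphi$ as in the derivation of (\ref{eq 5.2}), and integrate over $\Sigma_t$; Lemma~\ref{lem 3.2}, (\ref{simple q}), the tilted dominant boundary energy condition, and $\mu + J(N_t) \geq D$ then yield
\begin{align*}
(\Theta^+(t))'\!\int_{\Sigma_t}\!\varphi^{-1} dv \;\leq\; \tfrac{1}{2}\!\int_{\Sigma_t}\! R^{\Sigma_t} dv + \!\int_{\partial\Sigma_t}\! H^{\partial\Sigma_t} ds - DA(\Sigma_t) + \Theta^+(t)\!\left(\!\int_{\Sigma_t}\!{\rm tr}_g h^M dv + (\cot\theta) A(\partial\Sigma_t)\!\right).
\end{align*}
In place of the Gauss--Bonnet step of (\ref{eq 5.4}), I would feed $\rho = 1$ into the Yamabe functional (\ref{Yamabe func}) on $\Sigma_t$ to obtain $\int R^{\Sigma_t} + 2\int H^{\partial\Sigma_t} \geq \mathcal{Y}^{1,0}(\Sigma_t,\partial\Sigma_t) A(\Sigma_t)^{\frac{n-2}{n}}$, and combine this with the conformal invariance $\sigma^{1,0}(\Sigma_t,\partial\Sigma_t) = \sigma^{1,0}(\Sigma,\partial\Sigma) = 2DA(\Sigma)^{\frac{2}{n}}$ together with the first variation of area (\ref{first var}) re-expressing $A(\Sigma_t) - A(\Sigma)$ via $H(r)$ to convert the above into a Gr\"onwall-type ODE for $\Theta^+(t)$. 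Evaluated at $t = 0$ this gives $(\Theta^+)'(0) \leq 0$, which the weakly outermost hypothesis improves to $(\Theta^+)'(0) = 0$. The second-derivative computation at $t = 0$ produces a term proportional to $D(\cot\theta) A(\partial\Sigma)$; under the assumptions of (A) this is strictly negative whenever $\theta \neq \pi/2$, contradicting weakly outermost and forcing $\theta = \pi/2$. In case (B), the energy-minimizing hypothesis (cf.\ (\ref{energy-mini})) combined with $n$-convexity forces $H(t) = {\rm tr}_{\Sigma_t}h^M = 0$ and hence $\Theta^\pm(t) \equiv 0$ on the foliation.

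With $\Theta^+(t) \equiv 0$, applying Theorem~\ref{main high 1-1} leafwise shows each $(\Sigma_t,\gamma_t)$ is Einstein with $R^{\Sigma_t} = 2D$ and totally geodesic boundary; writing $g\vert_V = \varphi^2 dt^2 + \gamma_t$ and using $\chi^\pm = 0$ to deduce $\gamma_t \equiv \gamma_0$, $\varphi$ independent of the $\Sigma$-direction, and $h^M = a(t)\,dt^2$ then establishes the rigidity conclusions (i)--(iv), exactly as in Theorem~B of \cite{AM} and Section~5 of \cite{M2}. For the residual case of (B) with $\theta \neq \pi/2$, $Ric^M = \frac{\mu}{n+1}g$, and $H^{\partial M} \geq 0$, I would dualize by taking $\Theta^-(t)$ with $\varphi^- = -\varphi$ to obtain
\begin{align*}
\Delta_{\Sigma_t}\varphi + \varphi^{-1}\vert \nabla^{\Sigma_t}\varphi \vert^2 + \varphi\bigl(\vert J \vert + \tfrac{1}{4}\vert \chi^-_t\vert^2\bigr) = 0
\end{align*}
together with $\partial\varphi/\partial\nu_t = q\varphi = \varphi H^{\partial M}/\sin\theta \geq 0$. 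Stokes' theorem then forces $\vert J\vert = \vert \chi^-_t\vert = \vert \nabla^{\Sigma_t}\varphi\vert = 0$ and $H^{\partial M} = 0$; combined with $\chi^+ = 0$ and the Einstein-trace hypothesis this gives $\mu = 0$, $h^{\Sigma_t} = 0$, and $Ric^M = 0$ in an outer neighborhood of $\Sigma$.

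The main obstacle I anticipate is the Yamabe comparison step: the inequalities $\mathcal{Q}^{1,0}(\rho) \geq \mathcal{Y}^{1,0}$ and $\mathcal{Y}^{1,0} \leq \sigma^{1,0}$ do not immediately point in the direction a na\"ive Gr\"onwall argument would want, and the auxiliary hypotheses in (A) and (B) ($n$-convex, energy-minimizing, or the sign restriction on $\theta$) are precisely what allow the leafwise Yamabe bound to be closed into monotonicity of $\Theta^+(t)$. The rest of the proof is a careful but essentially mechanical upgrade of the 3-dimensional argument.
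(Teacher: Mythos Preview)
Your outline has a genuine gap at the Yamabe comparison step, and the hand-wave in your final paragraph does not resolve it. After integrating over $\Sigma_t$ with test function $\rho=1$, you obtain only the lower bound
\[
\int_{\Sigma_t} R^{\Sigma_t}\,dv + 2\int_{\partial\Sigma_t} H^{\partial\Sigma_t}\,ds \;\geq\; \mathcal{Y}^{1,0}(\Sigma_t,\partial\Sigma_t)\,A(\Sigma_t)^{\frac{n-2}{n}},
\]
whereas the Gr\"onwall argument for $(\Theta^+(t))'$ requires an \emph{upper} bound on this quantity. The relation $\mathcal{Y}^{1,0}\le\sigma^{1,0}$ points the wrong way and the auxiliary hypotheses ($n$-convexity, energy-minimizing, $\theta\in(0,\tfrac{\pi}{2}]$) do not reverse it; they are used only later, to handle the $\mathrm{tr}_{\Sigma_r}h^M$ and boundary-length terms after the Yamabe bound is already in place. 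At $t=0$ your inequality accidentally closes because Theorem~\ref{main high 1-1} forces $R^{\Sigma_0}=2D$ and $H^{\partial\Sigma_0}=0$, but for $t>0$ you have no control and the monotonicity argument fails.

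The paper's remedy is to multiply the pointwise evolution inequality \emph{before} integrating by $u_t^2$, where $u_t\in C^{\infty}(\Sigma_t)$ is kept arbitrary. This produces, after the Cauchy--Schwarz step and the harmless replacement $2\le\frac{4(n-1)}{n-2}$, the numerator of the Yamabe quotient $\mathcal{Q}^{1,0}(u_t)$ on the right-hand side; dividing by $\bigl(\int_{\Sigma_t}u_t^{2n/(n-2)}\bigr)^{(n-2)/n}$ and then \emph{choosing} $u_t$ to be a Yamabe minimizer in $[\gamma_t]$ gives $\mathcal{Q}^{1,0}(u_t)=\mathcal{Y}^{1,0}(\Sigma_t,\partial\Sigma_t)\le\sigma^{1,0}(\Sigma,\partial\Sigma)=2DA(\Sigma)^{2/n}$. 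This is the missing idea that makes the inequality run in the right direction. With that modification in place, the rest of your outline (first variation of area, the $\theta$-dichotomy via $(\Theta^+)''(0)$, energy-minimizing plus $n$-convexity in case~(B), the $\Theta^-$ dualization for the $\theta\ne\pi/2$ conclusion) matches the paper's argument.
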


\begin{proof}
From (\ref{eq 5.2}), we have
\begin{align} \label{ine 5.7}
\frac{1}{\varphi}\cdot\frac{d\Theta^{+}(t)}{dt} \leq {\rm div}_{\Sigma_{t}}Y - \vert Y \vert^{2} + Q + \Theta^{+}(t){\rm tr}_{g}h^{M}
\end{align}
on $\Sigma_{t}$, for $t \in [0,\epsilon)$, where $Y = W_{t} - \varphi^{-1}\nabla^{\Sigma_{t}}\varphi$. Let $u_{t} \in C^{\infty}(\Sigma_{t})$. Multiplying the inequality (\ref{ine 5.7}) by $u_{t}^{2}$, we obtain
\begin{align} \label{ine 5.8}
\frac{u_{t}^{2}}{\varphi}\frac{d\Theta^{+}(t)}{dt} &\leq u_{t}^{2}{\rm div}_{\Sigma_{t}}Y - u^{2}_{t}\vert Y \vert^{2} + Qu^{2}_{t} + u^{2}_{t}\Theta^{+}(t){\rm tr}_{g}h^{M} \\
&= {\rm div}_{\Sigma_{t}}(u^{2}_{t}Y) - 2u_{t}\langle \nabla^{\Sigma_{t}}u_{t}, Y \rangle +Qu^{2}_{t} + u^{2}_{t}\Theta^{+}(t){\rm tr}_{g}h^{M} \nonumber \\
&\leq {\rm div}_{\Sigma_{t}}(u^{2}_{t}Y) + \vert \nabla^{\Sigma_{t}}u_{t} \vert^{2} + Qu^{2}_{t} + u^{2}_{t}\Theta^{+}(t){\rm tr}_{g}h^{M} \nonumber \\
&\leq {\rm div}_{\Sigma_{t}}(u^{2}_{t}Y) + \vert \nabla^{\Sigma_{t}}u_{t} \vert^{2} + u^{2}_{t}\left(\frac{R^{\Sigma_{t}}}{2} - D\right) + u^{2}_{t}\Theta^{+}(t){\rm tr}_{g}h^{M}, \nonumber
\end{align}
where we used our dominant energy condition in the last inequality. Integrating (\ref{ine 5.8}) over $\Sigma_{t}$, then we have
\begin{align*}
(\Theta^{+}(t))'\int_{\Sigma_{t}}\frac{u^{2}_{t}}{\varphi}dv &\leq \int_{\Sigma_{t}}\left(\vert \nabla^{\Sigma_{t}}u_{t} \vert^{2} + \frac{R^{\Sigma_{t}}}{2}u^{2}_{t}\right)dv -D\int_{\Sigma_{t}}u^{2}_{t}dv + \Theta^{+}(t)\int_{\Sigma_{t}}{\rm tr}_{g}h^{M}u^{2}_{t}dv \\
&\quad + \int_{\partial\Sigma_{t}}u^{2}_{t}\langle Y,\nu_{t} \rangle ds.
\end{align*}
Note that
\begin{align*}
\int_{\partial\Sigma_{t}}u^{2}_{t}\langle Y, \nu_{t} \rangle ds &= \int_{\partial\Sigma_{t}} u^{2}_{t}\left(\langle W_{t},\nu_{t} \rangle - \frac{1}{\varphi}\langle \nabla^{\Sigma_{t}}\varphi, \nu_{t} \rangle\right) ds \\
&= \int_{\partial\Sigma_{t}}u^{2}_{t} \left(\langle W_{t},\nu_{t} \rangle - q\right) ds \\
&= -\int_{\partial\Sigma_{t}}\frac{u^{2}_{t}}{\sin\theta}\left(H^{\partial M} - (\cos\theta)\Theta^{+}(t) - (\sin\theta)H^{\partial\Sigma_{t}}\right) ds \\
&\quad -\int_{\partial\Sigma_{t}} \frac{u^{2}_{t}}{\sin\theta}\left((\cos\theta){\rm tr}_{\Sigma_{t}}h^{M} - (\sin\theta)h^{M}(N_{t},\nu_{t})\right) ds \\
&= -\int_{\partial\Sigma_{t}}\frac{u^{2}_{t}}{\sin\theta}\left(H^{\partial M}  + (\cos\theta){\rm tr}_{\partial M}h^{M} + (\sin\theta)h^{M}(\bar{N},\bar{\nu}_{t})\right) ds \\
&\quad +\int_{\partial\Sigma_{t}}\frac{u^{2}_{t}}{\sin\theta}\left((\cos\theta)\Theta^{+}(t) + (\sin\theta)H^{\partial\Sigma_{t}}\right) ds \\
&\leq \int_{\partial\Sigma_{t}}\left(u^{2}_{t}(\cot\theta)\Theta^{+}(t) + H^{\partial\Sigma_{t}}u^{2}_{t}\right) ds,
\end{align*}
where we used the equality (\ref{high tilted}) in the fourth equality and tilted dominant boundary energy condition in the last inequality (see also appendix A in \cite{CH}). Then we obtain
\begin{align} \label{ine 5.9}
2(\Theta^{+}(t))'\int_{\Sigma_{t}}\frac{u^{2}_{t}}{\varphi}dv &\leq \int_{\Sigma_{t}}\left(2\vert \nabla^{\Sigma_{t}}u_{t} \vert^{2} + R^{\Sigma_{t}}u^{2}_{t}\right)dv + 2\int_{\partial\Sigma_{t}}H^{\partial\Sigma_{t}}u^{2}_{t}ds \\
&\quad -2D\int_{\Sigma_{t}}u^{2}_{t}dv + 2\Theta^{+}(t)\int_{\Sigma_{t}}{\rm tr}_{g}h^{M}u^{2}_{t}dv \nonumber  \\
&\quad + 2(\cot\theta)\Theta^{+}(t)\int_{\partial\Sigma_{t}}u^{2}_{t}ds. \nonumber \\
&\leq \int_{\Sigma_{t}}\left(\frac{4(n-1)}{n-2}\vert \nabla^{\Sigma_{t}}u_{t} \vert^{2} + R^{\Sigma_{t}}u^{2}_{t}\right)dv + 2\int_{\partial\Sigma_{t}}H^{\partial\Sigma_{t}}u^{2}_{t}ds \nonumber \\
&\quad -2D\int_{\Sigma_{t}}u^{2}_{t}dv + 2\Theta^{+}(t)\int_{\Sigma_{t}}{\rm tr}_{g}h^{M}u^{2}_{t}dv \nonumber  \\
&\quad + 2(\cot\theta)\Theta^{+}(t)\int_{\partial\Sigma_{t}}u^{2}_{t}ds. \nonumber
\end{align}
Let $u_{t}$ be a positive smooth function on $\Sigma_{t}$ satisfying $\tilde{\gamma}_{t} = u^{\frac{4}{n-2}}_{t}\gamma_{t}$ for each $t \in [0,\epsilon)$. From the solution to the Yamabe problem with boundary, there exists the metric $\tilde{\gamma}_{t} = u^{\frac{4}{n-2}}_{t}\gamma_{t}$ has constant scalar curvature $2D$ and vanishing mean curvature of the boundary (\cite{AR, ES2, ES4}).

\begin{itemize}
\item[CASE (A).] $D < 0$, $\sigma^{1,0}(\Sigma,\partial\Sigma) < 0$, and $n$-convex with $\theta \in (0,\frac{\pi}{2}]$.
\end{itemize}
By the inequality (\ref{ine 5.5}) and (\ref{ine 5.9}), we have
\begin{align} \label{ine 5.10}
2(\Theta^{+}(t))'\int_{\Sigma_{t}}\frac{u^{2}_{t}}{\varphi}dv &\leq \int_{\Sigma_{t}}\left(\frac{4(n-1)}{n-2}\vert \nabla^{\Sigma_{t}}u_{t} \vert^{2} + R^{\Sigma_{t}}u^{2}_{t}\right)dv + 2\int_{\partial\Sigma_{t}}H^{\partial\Sigma_{t}}u^{2}_{t}ds  \\
&\quad + 2\Theta^{+}(t)\int_{\Sigma_{t}}{\rm tr}_{g}h^{M}u^{2}_{t}dv -2D\left(\int_{\Sigma_{t}}u^{\frac{2n}{n-2}}_{t}dv\right)^{\frac{n-2}{n}}A(\Sigma_{t})^{\frac{2}{n}} \nonumber\\
&\quad + 2(\cot\theta)\Theta^{+}(t)\int_{\partial\Sigma_{t}}u^{2}_{t}ds. \nonumber
\end{align}
Dividing the inequality (\ref{ine 5.10}) by $\left(\int_{\Sigma_{t}}u^{\frac{2n}{n-2}}_{t}dv\right)^{\frac{n-2}{n}}$, we obtain
\begin{align*}
2(\Theta^{+}(t))'\beta_{6}(t) &\leq \frac{\int_{\Sigma_{t}}\left(\frac{4(n-1)}{n-2}\vert \nabla^{\Sigma_{t}}u_{t} \vert^{2} + R^{\Sigma_{t}}u^{2}_{t}\right)dv + 2\int_{\partial\Sigma_{t}}H^{\partial\Sigma_{t}}u^{2}_{t}ds}{\left(\int_{\Sigma_{t}}u^{\frac{2n}{n-2}}_{t}dv\right)^{\frac{n-2}{n}}} \\
&\quad +2(\cot\theta)\Theta^{+}(t)\beta_{7}(t) + 2\Theta^{+}(t)\beta_{8}(t) -2DA(\Sigma_{t})^{\frac{2}{n}},
\end{align*}
where $\beta_{6}(t) = \frac{\int_{\Sigma_{t}}\frac{u^{2}_{t}}{\varphi}dv}{\left(\int_{\Sigma_{t}}u^{\frac{2n}{n-2}}_{t}dv\right)^{\frac{n-2}{n}}}$, $\beta_{7}(t) = \frac{\int_{\partial\Sigma_{t}}u^{2}_{t}ds}{\left(\int_{\Sigma_{t}}u_{t}^{\frac{2n}{n-2}}dv\right)^{\frac{n-2}{n}}}$, and $\beta_{8}(t) = \frac{\int_{\Sigma_{t}}{\rm tr}_{g}h^{M}u^{2}_{t}dv}{\left(\int_{\Sigma_{t}}u_{t}^{\frac{2n}{n-2}}dv\right)^{\frac{n-2}{n}}}$. By the definition of the Yamabe constant, we have
\begin{align*}
2(\Theta^{+}(t))'\beta_{6}(t) &\leq \sigma^{1,0}(\Sigma,\partial\Sigma) +2(\cot\theta)\Theta^{+}(t)\beta_{7}(t) + 2\Theta^{+}(t)\beta_{8}(t) -2DA(\Sigma_{t})^{\frac{2}{n}}.
\end{align*}
Since the equality (\ref{high area est 1}) in Theorem \ref{main high 1-1}, we get
\begin{align} \label{ine 5.11}
2(\Theta^{+}(t))'\beta_{6}(t) &\leq 2D\left(A(\Sigma)^{\frac{2}{n}} - A(\Sigma_{t})^{\frac{2}{n}}\right) +2(\cot\theta)\Theta^{+}(t)\beta_{7}(t) + 2\Theta^{+}(t)\beta_{8}(t) \\
&= -\frac{4D}{n}\int^{t}_{0}\frac{d}{dr}\left(A(\Sigma_{r})\right)A(\Sigma_{r})^{\frac{2-n}{n}}dr +2(\cot\theta)\Theta^{+}(t)\beta_{7}(t) \nonumber \\
&\quad + 2\Theta^{+}(t)\beta_{8}(t). \nonumber
\end{align}
By the first variation (\ref{first var}) for the area, we obtain
\begin{align*}
-\frac{4D}{n}\int^{t}_{0}\frac{d}{dr}\left(A(\Sigma_{r})\right)A(\Sigma_{r})^{\frac{2-n}{n}}dr &= -\frac{4D}{n}\Theta^{+}(t)\int^{t}_{0}A(\Sigma_{r})^{\frac{2-n}{n}}\left(\int_{\Sigma_{r}}\varphi \, dv\right)dr \\
&\quad + \frac{4D}{n}\int^{t}_{0}A(\Sigma_{r})^{\frac{2-n}{n}}\left(\int_{\Sigma_{r}}{\rm tr}_{\Sigma_{r}}h^{M}\varphi \, dv\right)dr \nonumber \\
&\quad + \frac{4D}{n}\cot\theta\int^{t}_{0}A(\Sigma_{r})^{\frac{2-n}{n}}\left(\int_{\partial\Sigma_{r}}\varphi \, ds\right)dr \nonumber \\
&\leq -\frac{4D}{n}\Theta^{+}(t)\int^{t}_{0}A(\Sigma_{r})^{\frac{2-n}{n}}\left(\int_{\Sigma_{r}}\varphi \, dv\right)dr \nonumber \\
&\quad +\frac{4D}{n}\cot\theta\int^{t}_{0}A(\Sigma_{r})^{\frac{2-n}{n}}\left(\int_{\partial\Sigma_{r}}\varphi \, ds\right)dr, \nonumber
\end{align*}
since our assumption is $n$-convex on $M_{+}$. Then we have
\begin{align} \label{ine 5.12}
2(\Theta^{+}(t))'\beta_{6}(t) &\leq  2\Theta^{+}(t)\beta_{8}(t) -\frac{4D}{n}\Theta^{+}(t)\int^{t}_{0}\beta_{9}(r)dr \\
&\quad + \cot\theta\left(2\Theta^{+}(t)\beta_{7}(t) + \frac{4D}{n}\int^{t}_{0}\beta_{10}(r)dr\right) \nonumber
\end{align}
where $\beta_{9}(t) = A(\Sigma_{t})^{\frac{2-n}{n}}\left(\int_{\Sigma_{t}}\varphi \, dv\right)$ and $\beta_{10}(t) = A(\Sigma_{t})^{\frac{2-n}{n}}\left(\int_{\partial\Sigma_{t}}\varphi \, ds\right)$. If $t = 0$ and $\theta \in (0,\frac{\pi}{2}]$, then we obtain
\begin{align*}
2(\Theta^{+}(0))'\beta_{6}(0) \leq 0.
\end{align*}
If $(\Theta^{+}(0))' < 0$, then $\Theta^{+}(t) < 0$ for $t \in [0,\epsilon)$. This is a contradiction since $\Sigma$ is a weakly outermost. So $(\Theta^{+}(0))' = 0$. Let $\beta(t)$ denote the right hand side of the inequality (\ref{ine 5.12}). Then we get
\begin{align*}
\beta'(0) = \frac{2D}{n}\cot\theta\frac{\beta_{10}(0)}{\beta_{6}(0)}.
\end{align*}
This implies that 
\begin{align*}
(\Theta^{+}(0))'' < \beta'(0) < 0
\end{align*}
if and only if $\theta \neq \frac{\pi}{2}$. Since $\Sigma$ is a weakly outermost, $\theta$ must be $\frac{\pi}{2}$. Therefore, we have
\begin{align*}
(\Theta^{+}(t))' -\Theta^{+}(t)\beta_{11}(t) \leq 0,
\end{align*}
where $\beta_{11}(t) = \frac{\beta_{8}(t) - \frac{2D}{n}\int^{t}_{0}\beta_{9}(r)dr}{\beta_{6}(t)}$. It follows that
\begin{align*}
\left(e^{-\int^{t}_{0}\beta_{11}(r)dr}\Theta^{+}(t)\right)' \leq 0.
\end{align*}
This gives that $\Theta^{+}(t) = 0$ for $t \in [0,\epsilon)$, since $\Sigma$ is a weakly outermost. Hence, the above all inequalities become equalities. So we obtain ${\rm tr}_{\Sigma_{t}}h^{M} = 0$ and $H(t) = 0$ on $\Sigma_{t}$. It follows that $\Theta^{-}(t) = 0$ for $t \in [0,\epsilon)$.

\begin{itemize}
\item[CASE (B).] $D = 0$, $\sigma^{1,0}(\Sigma,\partial\Sigma) \leq 0$, $n$-convex, and $\Sigma$ is an energy-minimizing.
\end{itemize}
By the inequality (\ref{ine 5.9}), we have
\begin{align} \label{ine 5.13}
2(\Theta^{+}(t))'\int_{\Sigma_{t}}\frac{u^{2}_{t}}{\varphi}dv &\leq \int_{\Sigma_{t}}\left(\frac{4(n-1)}{n-2}\vert \nabla^{\Sigma_{t}}u_{t} \vert^{2} + R^{\Sigma_{t}}u^{2}_{t}\right)dv + 2\int_{\partial\Sigma_{t}}H^{\partial\Sigma_{t}}u^{2}_{t}ds \\
&\quad  + 2\Theta^{+}(t)\int_{\Sigma_{t}}{\rm tr}_{g}h^{M}u^{2}_{t}dv + 2(\cot\theta)\Theta^{+}(t)\int_{\partial\Sigma_{t}}u^{2}_{t}ds. \nonumber
\end{align}
Dividing the inequality (\ref{ine 5.13}) by $\left(\int_{\Sigma_{t}}u^{\frac{2n}{n-2}}_{t}dv\right)^{\frac{n-2}{n}}$, then we get
\begin{align*}
(\Theta^{+}(t))'\beta_{6}(t) &\leq \sigma^{1,0}(\Sigma,\partial\Sigma) + \Theta^{+}(t)\left((\cot\theta)\beta_{7}(t) + \beta_{8}(t)\right) \\
&= \Theta^{+}(t)\left((\cot\theta)\beta_{7}(t) + \beta_{8}(t)\right).
\end{align*}
If $t = 0$, then $(\Theta^{+}(0))' \leq 0$ for $t \in [0,\epsilon)$. This implies that $(\Theta^{+}(0))' = 0$, since $\Sigma$ is a weakly outermost. Let $\beta_{12}(t) = \frac{(\cot\theta)\beta_{7}(t) + \beta_{8}(t)}{\beta_{6}(t)}$. Then we obtain
\begin{align*}
\left(e^{-\int^{t}_{0}\beta_{12}(r)dr}\Theta^{+}(t)\right)' \leq 0.
\end{align*}
It follows that $\Theta^{+}(t) = 0$ for all $t \in [0,\epsilon)$. By the energy-minimizing, we have
\begin{align*}
0 \leq E(t) - E(0) = \int^{t}_{0}E'(r)dr = \int^{t}_{0}\left(\int_{\Sigma_{r}}H(r)\varphi \, dv\right)dr.
\end{align*}
Since our assumption is $n$-convex, we deduce that $0 \leq H(t) \leq {\rm tr}_{\Sigma_{t}}h^{M} + H(t) = \Theta^{+}(t) = 0$. So we conclude that $H(t) = 0$ and ${\rm tr}_{\Sigma_{t}}h^{M} = 0$ on $\Sigma_{t}$. This gives $\Theta^{-}(t) = 0$ for $t \in [0,\epsilon)$.

In the case $\theta = \frac{\pi}{2}$, conclusions {\rm (i), (ii), (iii)} and {\rm (iv)}  of Theorem \ref{main high 1-2} (also stated as Theorem \ref{main high 2}) follow from Theorem B in \cite{AM} and the argument in Section 5 of \cite{M2}.

Finally, we prove that $M$ is Ricci flat around $\Sigma$ if $D = 0$, $Ric^{M} = \frac{\mu}{n+1}g$, $H^{\partial M} \geq 0$, and $\theta \neq \frac{\pi}{2}$. If we follow the final part of the proof of Theorem \ref{main thm 1-2} (Theorem \ref{main thm 2}), we obtain the following chain of inequalities:
\begin{align*}
0 \leq \int_{\partial\Sigma_{t}}\frac{\partial\varphi}{\partial\nu_{t}}ds = \int_{\Sigma_{t}} \Delta_{\Sigma_{t}}\varphi \, dv = -\int_{\Sigma_{t}}\left(\frac{\vert \nabla^{\Sigma_{t}}\varphi\vert^{2}}{\varphi}  + \left(\vert J \vert + \frac{1}{4}\vert \chi^{-}_{t} \vert^{2} \right)\varphi \right)dv \leq 0.   
\end{align*}
It follows that all inequalities must be equalities. Therefore, we have $\vert J \vert = 0$ on $\Sigma_{t}$ and $H^{\partial M} = 0$ along $\partial\Sigma_{t}$. If, in addition, $M$ satisfies $Ric^{M} = \frac{\mu}{n+1}g$ and $D = 0$, then it follows that $\mu = 0$ and hence $R^{\Sigma} = 0$ in an outer neighborhood of $\Sigma$. Thus, we conclude that $M$ is Ricci flat with $H^{\partial M} = 0$ in an outer neighborhood of $\Sigma$.

\end{proof}


\begin{thebibliography}{00}

\bibitem{ALY} A. Alaee, M. Lesourd, and S. -T. Yau,
{\it Stable surfaces and free boundary marginally outer trapped surfaces}, Calc. Var. Partial Differ. Equ., {\bf 60} (2021), 186.

\bibitem{ALM} S. Almaraz, L. L. de Lima, and L. Mari,
{\it Spacetime positive mass theorems for initial data sets with non-compact boundary}. Int. Math. Res. Not. IMRN, (2021), no. 4, 2783-2841.

\bibitem{A} L. Ambrozio,
{\it Rigidity of area-minimizing free boundary surfaces in mean convex three-manifolds}, J. Geom. Anal., {\bf 25} (2015), 1001-1017.

\bibitem{AMS2} L. Andersson, M. Mars, and W. Simon,
{\it Local existence of dynamical and trapping horizons}, Phys. Rev. Lett., {\bf 95} (2005), 111102.

\bibitem{AMS} L. Andersson, M. Mars, and W. Simon, 
{\it Stability of marginally outer trapped surfaces and existence of marginally
outer trapped tubes}, Adv. Theor. Math. Phys., {\bf 12} (2008), 853-888.

\bibitem{AR} H. Araujo,
{\it Critical points of the total scalar curvature plus total mean curvature functional}, Indiana Univ. Math. J., {\bf 52} (2003), 85-108.

\bibitem{BCBS} A. Barros, C. Cruz, R. Batista, and P. Sousa,
{\it Rigidity in dimension four of area-minimizing Einstein manifolds}, Math. Proc. Camb. Philos. Soc., {\bf 158} (2015), 355-363.

\bibitem{BC} A. Barros and C. Cruz,
{\it Free boundary hypersurfaces with non-positive Yamabe invariant in mean convex manifolds}, J. Geom. Anal., {\bf 30} (2020), 3542-3562.

\bibitem{BBN} H. Bray, S. Brendle, and A. Neves,
{\it Rigidity of area-minimizing two-spheres in three-manifolds}, Commun. Anal. Geom., {\bf 18} (2010), 821-830.

\bibitem{CG} M. Cai and G. Galloway,
{\it Rigidity of area-minimizing tori in 3-manifolds of nonnegative scalar curvature}, Commun. Anal. Geom., {\bf 8} (2000), 565-573.

\bibitem{CG2} M. Cai and G. Galloway,
{\it On the topology and area of higher-dimensional black holes}, Class. Quantum Gravity, {\bf 18} (2001), 2707-2718.

\bibitem{C} M. Cai, 
{\it Volume Minimizing Hypersurfaces in Manifolds of Nonnegative Scalar Curvature, in Minimal surfaces, geometric analysis and symplectic geometry} (Baltimore, MD, 1999) Adv. Stud. Pure Math. {\bf 34} (2002), no. 1, 1-7.

\bibitem{CH} X. Chai,
{\it A tilted spacetime positive mass theorem}, arXiv:2304.05208.

\bibitem{CW} X. Chai and X. Wan
{\it Initial data set rigidity results for polyhedra}, arXiv:2408.13801.

\bibitem{CHE} P. Cherrier,
{\it Probl\`emes de Neumann non lin\'eaires sur les vari\'et\'es riemanniennes}, J. Funct. Anal., {\bf 57} (1984), 154-206.

\bibitem{CS} T. Cruz and A. S. Santos,
{\it Critical metrics and curvature of metrics with unit volume or unit area of the boundary}, J. Geom. Anal., {\bf 33} (2023), 22.

\bibitem{AM} D. de Almeida, A. Mendes,  
{\it Rigidity results for free boundary hypersurfaces in initial data sets with boundary}, arXiv:2502.09433.

\bibitem{DE} H. Deng,
{\it A Bray-Brendle-Neves type inequality for a Riemannian manifold}, Acta Math. Sci., {\bf 41} (2021), 487-492.

\bibitem{ES1} J. Escobar,
{\it Uniqueness theorems on conformal deformation of metric Sobolev inequalities and an eigenvalue estimate}, Commun. Pure Appl. Math., {\bf 43} (1990), 857-883.

\bibitem{ES2} J. Escobar,
{\it The Yamabe problem on manifolds with boundary}, J. Differ. Geom., {\bf 35} (1992), 21-84.

\bibitem{ES3} J. Escobar, 
{\it Conformal deformation of a Riemannian metric to a scalar flat metric with constant mean curvature on the boundary}, Ann. Math., {\bf 136} (1992), 1-50.

\bibitem{ES4} J. Escobar,
{\it Conformal deformation of a Riemannian metric to a constant scalar curvature metric with constant mean curvature on the boundary}, Indiana Univ. Math. J., {\bf 45} (1996), 917-943.

\bibitem{FCS} D. Fischer-Colbrie, R. Schoen, {\it The structure of complete stable minimal surfaces in 3-manifolds of nonegative scalar curvature}, Comm. Pure App. Math. {\bf 33} (1980) 199-211.

\bibitem{GA} G. J. Galloway,
{\it Rigidity of marginally trapped surfaces and the topology of black holes}, Comm. Anal. Geom., {\bf 16} (2008), 217-229.

\bibitem{GM} G. J. Galloway and A. Mendes,
{\it Rigidity of marginally outer trapped 2-spheres}, Commun. Anal. Geom., {\bf 26} (2018), 63-83.

\bibitem{GS} G. J. Galloway and R. Schoen,
{\it A generalization of Hawking's black hole topology theorem to higher dimensions}, Comm. Math. Phys., {\bf 266} (2006), 571-576.

\bibitem{KO} O. Kobayashi,
{\it Scalar curvature of a metric with unit volume}, Math. Ann., {\bf 279} (1987), 253-265.

\bibitem{LPP} S. Lee, S. Park, and J. Pyo,
{\it Capillary stable minimal hypersurfaces in a high dimensional Riemannian manifold}, J. Geom. Anal., {\bf 35} (2025), 157.

\bibitem{LI} C. Li,
{\it A polyhedron comparison theorem for 3-manifolds with positive scalar curvature}, Invent. Math., {\bf 219} (2020), 1-37.

\bibitem{LON} E. Longa,
{\it Low index capillary minimal surfaces in Riemannian $3$-manifolds}, J. Geom. Anal., {\bf 32} (2022), 143.

\bibitem{M} A. Mendes,
{\it Rigidity of marginally outer trapped (hyper)surfaces with negative $\sigma$-constant}, Trans. Amer. Math. Soc., {\bf 372} (2019), 5851-5868.

\bibitem{M3} A. Mendes,
{\it Rigidity of volume-minimizing hypersurfaces in Riemannian 5-manifolds}, Math. Proc. Camb. Philos. Soc., {\bf 167} (2019), 345-353.

\bibitem{M2} A. Mendes,
{\it Rigidity of free boundary MOTS}, Nonlinear Anal., {\bf 220} (2022), 112841.

\bibitem{MM} M. Micallef and V. Moraru,
{\it Spitting of 3-manifolds and rigidity of area-minimizing surfaces}, Proc. Amer. Math. Soc., {\bf 143} (2015), 2865-2872.

\bibitem{MO} V. Moraru,
{\it On Area comparison and rigidity involving the scalar curvature}, Ph. D. Thesis, University of Warwick (2013).

\bibitem{MO2} V. Moraru,
{\it On area comparison and rigidity involving the scalar curvature}, J. Geom. Anal., {\bf 26} (2016), 294-312.

\bibitem{N} I. Nunes,
{\it Rigidity of area-minimizing hyperbolic surfaces in three-manifolds}, J. Geom. Anal., {\bf 23} (2013), 1290-1302.

\bibitem{RS} A. Ros and R. Souam,
{\it On stability of capillary surfaces in a ball}, Pac. J. Math., {\bf 178} (1997), 345-361.

\bibitem{SC2} R. Schoen,
{\it Conformal deformation of a Riemannian metric to constant scalar curvature}, J. Differ. Geom., {\bf 20} (1984), 479-495.

\bibitem{SC} R. Schoen,
{\it Variational theory for the total scalar curvature functional for Riemannian metrics and related topics}, vol. Topics in calculus of variations (Montecatini Terme, 1987), edited by M. Giaquinta, Springer Verlag, 1989.

\bibitem{SY} R. Schoen and S. T. Yau,
{\it Existence of incompressible minimal surfaces and the topology of three dimensional manifolds with non-negative scalar curvature}, Ann. Math., {\bf 110} (1979), 127-142.

\bibitem{WA} X. Wang,
{\it On Compact Riemannian Manifolds with Convex Boundary and Ricci Curvature Bounded from Below}, J. Geom. Anal., {\bf 31} (2021), 3988-4003.

\end{thebibliography}
\end{document}